\ifpdf\usepackage[pdftex]{hyperref}
\else\usepackage[hypertex]{hyperref}\fi
\theoremstyle{plain}
\newtheorem{thm}{Theorem}[section]
\newtheorem{prop}[thm]{Proposition}
\newtheorem{lemma}[thm]{Lemma}
\newtheorem{cor}[thm]{Corollary}
\theoremstyle{definition}
\newtheorem{defn}[thm]{Definition}
\theoremstyle{remark}
\newtheorem{rem}[thm]{Remark}
\theoremstyle{remark}
\DeclareMathOperator{\tr}{Tr}
\DeclareMathOperator{\rtr}{^\textrm{R}Tr}
\DeclareMathOperator{\tra}{tr}
\DeclareMathOperator{\id}{Id}
\DeclareMathOperator{\spec}{spec}
\DeclareMathOperator{\End}{End}
\DeclareMathOperator{\dvol}{d vol}
\DeclareMathOperator{\Diff}{Diff}
\DeclareMathOperator{\ind}{ind}
\renewcommand{\Re}{\mathop{\textnormal{Re}}}
\renewcommand{\Im}{\mathop{\textnormal{Im}}}
\let\dsp=\displaystyle 
\let\lra=\longrightarrow
\def\R{\mathbb R}
\def\C{\mathbb C}
\def\N{\mathbb N}
\def\Z{\mathbb Z}
\def\EE{\mathcal{E}}
\def\LL{\mathcal{L}}
\def\PP{\mathcal{P}}
\def\VV{\mathcal{V}}
\def\hook{\lrcorner\:}
\def\that{\widehat{\theta}}
\begin{document}

\title{Analytic torsions on contact manifolds}

\author{Michel Rumin}
\address{Laboratoire de Mathématiques d'Orsay\\
  CNRS et Université Paris Sud\\
  91405 Orsay Cedex\\ France}

\email{michel.rumin@math.u-psud.fr}

\author{Neil Seshadri}
\address{Graduate School of Mathematical Sciences\\
  The University of Tokyo\\
  3--8--1 Komaba, Meguro, Tokyo 150-0044\\ Japan}
\curraddr{Rates Hybrids Quantitative Research\\
  JPMorgan Securities Japan Co.~Ltd.\\
  Tokyo Building, 2-7-3 Marunouchi, Chiyoda-ku, Tokyo 100-6432\\
  Japan}

\email{neil.seshadri@hotmail.com}

\date{\today}

\begin{abstract}
  We propose a definition for analytic torsion of the contact complex
  on contact manifolds. We show it coincides with Ray--Singer torsion
  on any $3$-dimensional CR Seifert manifold equipped with a unitary
  representation.  In this particular case we compute it and relate it
  to dynamical properties of the Reeb flow. In fact the whole spectral
  torsion function we consider may be interpreted on CR Seifert
  manifolds as a purely dynamical function through Selberg-type trace
  formulae.
\end{abstract}

\keywords{analytic torsion, contact complex, CR Seifert manifold}

\subjclass[2000]{58J52, 32V05, 32V20, 11M36, 37C30}

\thanks{First author supported in part by the French
  ANR-06-BLAN60154-01 grant. The second author gratefully acknowledges
  financial support received through the Japanese Government (MEXT)
  Scholarship for research students.}

\maketitle

% \setcounter{tocdepth}{1}
% \tableofcontents

\section{Introduction}
\label{sec:introduction}

As introduced by Ray and Singer in \cite{RS}, the analytic torsion of
a compact Riemannian manifold $M$ may be seen as an
infinite-dimensional analogue, on the de Rham complex $(\Omega^*M,
d)$, of the Reidemeister--Franz torsion of a finite simplicial
complex. More precisely, for $\lambda \geq 0$, let $E_\lambda^k $ be
the $]0, \lambda]$-spectral space of Hodge--de Rham Laplacian
$\Delta_k$ on $k$-forms.  Then the cut-off subcomplex $(E_\lambda^*,
d)$ is finite-dimensional and its Reidemeister--Franz torsion
satisfies
\begin{equation}
  \label{eq:1}
  2 \ln \tau_R (E_\lambda^*, d) = \ln \Bigl( \prod_{k=0}^n \det
  ({\Delta_k}_{\lceil E_\lambda^k} )^{(-1)^{k+1} k} \Bigr) =
  \sum_{k=0}^n (-1)^k k 
  \zeta'({\Delta_k}_{\lceil E_\lambda^k})(0) \,,
\end{equation}
where $\dsp \zeta({\Delta_k}_{\lceil E_\lambda^k})(s) = \tr\bigl(
{\Delta_k}_{\lceil E_\lambda^k}^{-s} \bigr)$ is the truncated zeta
function of $\Delta_k$ on $E^k_\lambda$. Taking these $(E_\lambda^*,
d)$ as successive `approximations' to the full de Rham complex,
Ray--Singer defined the analytic torsion $T_{RS}$ as being
\begin{equation}
  \label{eq:2}
  T_{RS} = \exp \Bigl( \frac{1}{2} \sum_{k=0}^n (-1)^k k \zeta'(\Delta_k)(0)
  \Bigr)\,,
\end{equation}
while the Ray--Singer metric on $\mathcal{L} = \det H^*(\Omega^*M, d)$
is given by
\begin{equation}
  \label{eq:3}
  \| \quad \|_{RS} = (T_{RS})^{-1} \mid \quad |_{L^2(\Omega^* M)},
\end{equation}
from the $L^2$ metric induced on $\mathcal{L}$ via identification of
the cohomology by harmonic forms in $\Omega^* M$.

The first purpose of this work is to adapt this idea to the contact
complex $(\mathcal{E}^*, d_H)$, a hypoelliptic differential form
complex naturally defined (\cite{Rumin90, Rumin94}) on contact
manifolds $(M ,H)$ of dimension $2n+1$. A specific feature of this
complex is that the differential $D = d_H~:~\mathcal{E}^n \rightarrow
\mathcal{E}^{n+1}$ in `middle degree' is a second-order operator,
which is due to a slower spectral sequence convergence at this degree;
see \cite[Proposition 3.3]{Rumin00}. In order to find, as above,
finite-dimensional cut-off subcomplexes $(E_\lambda^k, d_H)$
approximating $(\mathcal{E}^*, d_H)$, we are led to consider
\emph{fourth}-order Laplacians $\Delta_k$ in \emph{all} degrees $k$;
see \eqref{eq:10}. The Reidemeister--Franz torsion of each cut-off
subcomplex is then easily written (see
Proposition~\ref{prop:Reidemeister-contact}) as
\begin{equation*}
  4 \ln \tau_R (E_\lambda^*, d_H) = \ln \Bigl( \prod_{k=0}^{2n+1} \det
  ({\Delta_k}_{\lceil E_\lambda^k})^{(-1)^{k+1} w(k)} \Bigr) =
  \sum_{k=0}^{2n+1} 
  (-1)^k w(k) \zeta'({\Delta_k}_{\lceil E_\lambda^k})(0) \,,
\end{equation*}
with $w(k) = k $ for $k\leq n$ and $w(k)= k+1$ for $k> n$ being the
natural contact-weight of forms in $\mathcal{E}^k$; compare with
\eqref{eq:1}. This leads us to define a candidate for the analytic
torsion of the full contact complex by setting
\begin{equation*}
  T_C = \exp \Bigl(\frac{1}{4} \sum_{k=0}^{2n+1} (-1)^{k+1} w(k)
  \zeta'(\Delta_k)(0) \Bigr) \,.
\end{equation*}
We define also a torsion function
\begin{equation*}
  \kappa(s)= \frac{1}{2} \sum_{k=0}^{2n+1} (-1)^{k+1} w(k)
  \zeta(\Delta_k)(s) \,,
\end{equation*}
and a Ray--Singer metric $\|\quad \|_C$ on the determinant of the
cohomology $\det H^*(\mathcal{E}, d_H)$,
\begin{equation*}
  \|\quad \|_C = T_C \mid \quad |_{L^2(\mathcal{E})}\,.
\end{equation*}
(Our convention for $T_C$ is inverse to Ray--Singer's original
definition \eqref{eq:2}, but is standard now since it is natural at
the metric level, as compared to \eqref{eq:3}; see
e.g.~\cite{BZ,BGSIII}.)

\smallskip

Having thus defined a torsion upon geometric and algebraic bases, we
start then its analytical study.  We first establish in
Theorem~\ref{thm:contact-tor-var} and
Corollary~\ref{cor:contact-quillen-var} general variational formulae
for $\kappa(0)$, $T_C= \exp(\kappa'(0)/2)$ and the Ray--Singer
`contact' metric $\|\quad \|_C$. It turns out that $\kappa(0)$ is a
\emph{contact} invariant given by the integration of an unknown
universal polynomial in local curvature data. Its vanishing is
necessary in order for the Ray--Singer metric to be even scale
invariant under change of contact form $\theta\mapsto K \theta$ for
$K$ constant. We do not know whether $\kappa(0)$ vanishes in general
except in dimension $3$, as shown in Corollary~\ref{cor:3var}.
Therefore the rest of the paper deals with this lowest-dimensional
case.

Corollary~\ref{cor:3var} also states that there exist CR-invariant and
contact-invariant `corrections' to this Ray--Singer metric. Namely
there exist universal constants $(C_i)_{1\leq i\leq 4}$ such that, on
any contact manifold of dimension $3$,
$$
\|\quad \|_{\mathrm{CR}} = \exp\left( C_1 \int_M R^2 \,
  \theta\wedge d\theta + C_2 \int_M |A|^2 \theta\wedge d\theta\right)
\|\quad \|_{C} \,,
$$
is a CR-invariant (i.e.~independent of contact form) metric on
$\det H^*(\mathcal{E},d_H)$, where $R$ and $A$ are the Tanaka--Webster
scalar curvature and torsion.  Moreover
$$
\|\quad \|_{H}^\nu = \exp(C_3 \nu(M)) \|\quad \|_{\mathrm{CR}}
\quad \mathrm{and} \quad \|\quad \|_{H}^{D} = \exp(C_4
\overline{\eta}(D*)) \|\quad \|_{\mathrm{CR}}
$$
are contact-invariant metrics, where $\nu(M)$ is the
$\nu$-invariant of Biquard--Herzlich~\cite{BH}, and
$\overline{\eta}(D*)$ is the CR-invariant correction to $\eta(D*)$;
see \cite[Theorem 9.4]{BHR}.

We do not know the values of the constants $C_i$. Though they are all
related to the Heisenberg symbol of the Laplacians we consider, they
might be difficult to compute: first since the $\Delta_k$ are
fourth-order, but also because the Heisenberg symbolic calculus
(\cite{BGS,Getzler}, \S\ref{subsec:heisenberg}) suitable for the
hypoelliptic contact complex, is highly non-commutative.

\smallskip

Our next purpose is to compare the two analytic torsions and metrics
coming from the de Rham and contact complexes. It is natural to expect
they are related. Indeed, firstly these complexes have the same
cohomology, being homotopy equivalent
(Proposition~\ref{prop:contact-deRham}), and in particular the
determinants $\det H^*(\Omega^* M, d)$ and $\det H^*(\mathcal{E},
d_H)$ are canonically isomorphic.  Moreover, from the point of view of
spectral geometry, the non-exploding part of the Hodge--de Rham
spectrum converges towards the contact complex spectrum when one takes
the sub-Riemannian (or diabatic) limit $\varepsilon \searrow 0$ of
calibrated metrics $g_\varepsilon = d \theta(\cdot, J \cdot) +
\varepsilon^{-1} \theta^2$; see \cite{Rumin00, BHR}. Note that the
classical Ray--Singer metric stays constant in this limit, being
independent of the metric on $M$.

However, we cannot prove equality of metrics in general, but only on
particular contact manifolds called \emph{CR Seifert manifolds} in
\cite{BHR}.  These are CR manifolds $(M,H, J)$ of dimension $3$
admitting a transverse locally free circle action preserving the CR
structure $(H, J)$; see Definition~\ref{def:CR-Seifert}. The generator
$T = d/d t$ of the circle action is the Reeb field of an invariant
contact form $\theta$. On such a manifold $M$, endowed with any
unitary representation $\rho:\pi_1(M) \rightarrow U(N)$,
Theorem~\ref{thm:RS} states that the two Ray--Singer metrics of the
twisted de Rham and contact complexes coincide.

\smallskip

In the last part of this work we analyse in detail the torsion
function $\kappa(s)$ for CR Seifert manifolds. It first turns out that
$\kappa(s)$ is a dynamical function in this case, depending only on
the topology of $M$, together with the holonomies of the
representation along the various primitive closed orbits of the circle
action, as stated in formula \eqref{eq:40} and
Theorem~\ref{thm:kappa_x}.

Specialising to $s=0$ leads in Theorem~\ref{thm:Lefschetz-torsion} to
an explicit formula for the Ray--Singer torsion and metric, twisted by
any unitary representation. This Lefschetz-type formula extends a
formula given by David Fried \cite{Fried} in the acyclic case,
i.e.~$H^*(M, \rho)= \{0\}$, via topological methods and
Reidemeister--Franz torsion. Fried interprets it as the identity
\begin{equation}
  \label{eq:4}  
  T_{RS}(M, \rho) = \bigl| \exp ( Z_F(0)) \bigr|\,,
\end{equation}
where $Z_F(0)$ stands for the analytic continuation at $s=0$ of the
dynamical function
\begin{equation*}
  Z_F(s) =  - \sum_{C} \ind(C) \tr(\rho(C)) e^{-s \ell (C)}\,.
\end{equation*}
Here the sum describes all free homotopical classes of closed orbits
of the Reeb flow $T$, $\ind (C)$ denotes its Fuller index
(Proposition~\ref{prop:ind}), $l(C)$ its length and $\rho(C)$ its
holonomy.

Our approach leads to another viewpoint on \eqref{eq:4}. Namely, we
show in Theorem~\ref{thm:torsion-zeta} and \eqref{eq:60} that our
purely spectral torsion function $\kappa(s)$ may be seen as a
dynamical zeta function, \emph{in its whole}. Indeed it holds that
\begin{equation*}
  \Gamma(s) \bigl( \kappa(s) -  \kappa(M, \rho) \bigr) = \frac{2^{1-2s}}{
    \sqrt \pi} \Gamma(\frac{1}{2} - s) Z_\rho(2s)\,,
\end{equation*}
for
$$
Z_\rho(s) = \sum_{C} \ind(C) \rtr(\rho(C)) \, \ell(C)^{s} \,,
$$
where again the sum runs over all free homotopical classes of
closed orbits of the Reeb flow, $\rtr$ is the real part of the trace,
and $\kappa(M,\rho)= 2 \dim H^0(M, \rho) - \dim H^1(M, \rho)$ is a
purely cohomological term.

This Selberg-type trace formula also has a counterpart at the level of
heat kernels. Indeed let $\tr_\kappa (e^{-t \Delta}) = 2 \tr (e^{t
  \Delta_0}) - \tr (e^{-t \Delta_1})$; then we show in
Theorem~\ref{thm:Selberg} that
\begin{equation*}
  \tr_\kappa(e^{-t \Delta}) = \dim V \frac{\sqrt \pi
    \chi(\Sigma)}{\sqrt t } + 
  \frac{1}{\sqrt{\pi t} }\sum_C 
  \ell(C) \ind(C) \rtr(\rho(C)) e^{-\ell(C)^2/ 4t}\,, 
\end{equation*}
where $\chi(\Sigma) $ is the rational Euler class of the quotient
surface orbifold $\Sigma= M / \mathbb{S}^1$.  Hence our torsion heat
trace (of fourth-order Laplacians) coincides with a dynamical theta
function. This has some surprising consequences for the small time
development of $\tr_\kappa (e^{-t \Delta})$ on CR Seifert manifolds,
but also on general $3$-dimensional contact manifolds, as given in
Corollaries~\ref{cor:full-heat} and \ref{cor:full-heat-contact}.

\smallskip

The paper is organised as follows. In \S\ref{sec:detbundle} we first
review one construction of the contact complex $(\mathcal{E}^*, d_H)$
and recall the Ray--Singer argument, from the viewpoint of the
Ray--Singer metric on the determinant of the cohomology. We then adapt
this argument to the contact complex, which leads us to define the
analytic torsion $T_C$, a torsion function $\kappa$ and a Ray--Singer
metric $\|\quad \|_C$ on $\det H (\mathcal{E}^*, d_H)$.

In \S\ref{sec:heat-kern-vari} we start the analytic study of this
torsion. After reviewing relevant properties of hypoelliptic zeta
functions and heat kernels, we establish variational formulae for
$\kappa(0)$, the torsion $T_C$ and the contact Ray--Singer metric
$\|\quad \|_C$. We then show that $\kappa(0)= 0$ in dimension $3$, and
introduce corrections of the metric $\|\quad \|_{C}$ that give CR and
contact invariants.

In \S\ref{sec:contact-ray-singer} we compare Ray--Singer analytic
torsion to ours and show that the two Ray--Singer metrics coincide on
CR Seifert manifolds.

The final \S\ref{sec:torsion-function-seifert} is devoted to the study
of the dynamical aspects of the torsion function of the contact
complex, still on CR Seifert manifolds.

\smallskip

\textbf{Acknowledgements.}  The second author thanks his PhD
supervisor Prof.~Kengo Hirachi for his expert guidance, and
Prof.~Rapha\"el Ponge for kindly explaining some of his results and
for helpful comments on an earlier draft of parts of this paper. He is
grateful to Prof.~Robin Graham for his hospitality during the second
author's visit to the University of Washington in the summer of 2006.
He thanks the organisers of the 2006 IMA Summer Program `Symmetries
and Overdetermined Systems of Partial Differential Equations' and 2006
`Seoul--Tokyo Conference in Mathematics', where parts of this project
were worked on and presented.

We are also grateful to Mike Eastwood, Robin Graham and Patrick
G{\'e}rard for useful discussions. Finally, we benefited from
stimulating and enlightening conversations with Jean-Michel Bismut,
which led us to feel some coherence or correspondence between our
results and his recent works \cite{Bismut05,BL,Bismut07} on Fried's
conjecture and the hypoelliptic Laplacian.

\section{Contact analytic torsion via a determinant bundle}
\label{sec:detbundle}

Ray and Singer \cite{RS} defined analytic torsion of the de Rham
complex as an infinite-dimensional analogue of the Reidemeister--Franz
torsion of finite simplicial complexes.  Our purpose in this section
is to describe their argument and adapt it to a similar complex
defined on contact manifolds, the construction of which we now review.

\subsection{Contact complex}
\label{subsec:contact}

Let $(M, H)$ be a smooth orientable contact manifold of dimension
$2n+1$. This means that the smooth \emph{contact distribution}
$H\subset TM$ is given as the null space of a globally defined
$1$-form, called a \emph{contact form}, satisfying the condition of
maximal non-integrability $\theta\wedge (d\theta)^n\neq 0$. The
contact forms comprise an equivalence class under multiplication by
smooth non-vanishing functions.

The \emph{contact complex} (\cite{Rumin90,Rumin94}) is a refinement of
the de Rham complex on contact manifolds defined as follows. Let
$\Omega^*M$ denote sections of the graded bundle of smooth
differential forms on $M$, $\mathcal{I}$ the ideal in $\Omega^*M$
generated by $\theta$ and $d\theta$, and $\mathcal{J}$ the ideal in
$\Omega^*M$ consisting of elements annihilated by $\theta$ and
$d\theta$. One verifies that $\mathcal{I}^k = \Omega^kM$ for $k\geq
n+1$, $\mathcal{J}^k = 0$ for $k\leq n$, and that the de Rham exterior
derivative $d$ naturally induces operators $d_H$ to form two complexes
$$
\Omega^0M\stackrel{d_H}{\longrightarrow}\Omega^1M/
\mathcal{I}^1\stackrel{d_H}{\longrightarrow}
\cdots\stackrel{d_H}{\longrightarrow}\Omega^nM/\mathcal{I}^n
$$
and
$$
\mathcal{J}^{n+1}\stackrel{d_H}{\longrightarrow}
\mathcal{J}^{n+2}\stackrel{d_H}{\longrightarrow}
\cdots\stackrel{d_H}{\longrightarrow}\mathcal{J}^{2n+1}.
$$
It is clear that these two complexes are defined independently of
the choice of $\theta$. These two complexes are joined by a
second-order differential operator $D: \Omega^nM/\mathcal{I}^n \to
\mathcal{J}^{n+1}$ defined by setting $D[\alpha] = d\beta$, where
$\beta\in\Omega^nM$ is defined by the following:

\begin{lemma}[{\cite{Rumin90,Rumin94}}]
  Let $\alpha\in\Omega^nM$. Then there exists a unique
  $\beta\in\Omega^nM$ such that $\beta\equiv\alpha$ (mod $\theta$) and
  $\theta\wedge d\beta = 0$. Moreover $d\beta \in \mathcal{J}^{n+1}$,
  and if $\alpha\in\mathcal{I}^n$ then $d\beta = 0$.
\end{lemma}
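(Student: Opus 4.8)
The plan is to decompose the statement into an existence-uniqueness part and then verify the two additional properties ($d\beta \in \mathcal{J}^{n+1}$, and vanishing on $\mathcal{I}^n$). First I would fix a contact form $\theta$ with Reeb field $T$, so that $TM = H \oplus \R T$ and every $k$-form decomposes as $\gamma = \gamma_0 + \theta \wedge \gamma_1$ with $\gamma_0, \gamma_1$ horizontal (i.e.\ annihilated by $\iota_T$). Writing $\beta = \alpha + \theta \wedge \mu$ for an unknown horizontal $(n-1)$-form $\mu$, the condition $\theta \wedge d\beta = 0$ becomes, after expanding $d(\theta \wedge \mu) = d\theta \wedge \mu - \theta \wedge d\mu$, an equation of the form $\theta \wedge \bigl( d\alpha + d\theta \wedge \mu \bigr) = 0$. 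Since $\theta \wedge (\,\cdot\,)$ kills the $\theta$-component, this is equivalent to the horizontal equation $(d\alpha)_0 + (d\theta \wedge \mu)_0 = 0$, i.e.\ $L(\mu) = -(d\alpha)_0$ where $L = d\theta \wedge (\,\cdot\,)$ is the Lefschetz-type operator acting on horizontal forms.

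The crux is therefore the algebraic fact that $L = d\theta \wedge \cdot$ is a \emph{bijection} from horizontal $(n-1)$-forms to horizontal $(n+1)$-forms. This is precisely the hard Lefschetz property for the symplectic form $d\theta|_H$ on the $2n$-dimensional bundle $H$: the map $L^{n-k+1}\colon \Lambda^{k-1} H^* \to \Lambda^{2n-k+1} H^*$ is an isomorphism, and for $k = n$ this is exactly $L\colon \Lambda^{n-1}H^* \to \Lambda^{n+1}H^*$ being invertible (fibrewise, hence on sections). I would invoke this pointwise linear-algebra statement — it is standard Kähler/symplectic linear algebra — to get a unique horizontal $\mu$, and hence a unique $\beta \equiv \alpha \pmod\theta$ solving $\theta \wedge d\beta = 0$. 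The main obstacle, such as it is, is simply making sure the bookkeeping with the horizontal/vertical splitting and the sign conventions in $d(\theta\wedge\mu)$ is airtight; there is no analytic difficulty since everything is a fibrewise linear isomorphism.

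Granting existence and uniqueness of $\beta$, the property $d\beta \in \mathcal{J}^{n+1}$ means $d\beta$ is annihilated by both $\theta \wedge \cdot$ and $d\theta \wedge \cdot$. The first is immediate: $\theta \wedge d\beta = 0$ by construction. For the second, note $d\theta \wedge d\beta = -d(d\theta \wedge \beta)$ (as $d(d\theta) = 0$), so it suffices to show $d\theta \wedge \beta$ is closed, and in fact I would argue $d\theta \wedge \beta = 0$ outright: writing $\beta = \alpha + \theta\wedge\mu$, one has $d\theta \wedge \beta = d\theta \wedge \alpha + d\theta\wedge\theta\wedge\mu$, and in degree $n+2$ on a $(2n+1)$-manifold these are $(n+2)$-forms; $d\theta\wedge\alpha$ restricted to $H$ is $L$ applied to the horizontal part of $\alpha$, and the condition defining $\beta$ forces the relevant combination to vanish — alternatively, since $\mathcal{I}^k = \Omega^k M$ for $k \geq n+1$, the statement $d\beta\in\mathcal{J}^{n+1}$ with $\theta\wedge d\beta=0$ already pins down $d\beta$ as a primitive class, and one checks $d\theta \wedge d\beta = 0$ using $\theta \wedge d\beta = 0$ together with the fact that a form $\psi$ of degree $n+1$ with $\theta\wedge\psi = 0$ is $\theta\wedge\psi_1$, whence $d\theta\wedge\psi = d\theta\wedge\theta\wedge\psi_1 $, and then re-expressing via the Lefschetz decomposition. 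Finally, if $\alpha \in \mathcal{I}^n$, write $\alpha = \theta \wedge a + d\theta \wedge b$; then $\alpha \equiv d\theta\wedge b \pmod\theta$, and $\beta := d\theta \wedge b$ satisfies $\theta\wedge d\beta = \theta\wedge d(d\theta\wedge b) = -\theta\wedge d\theta\wedge db$, which one arranges to be zero by a further correction inside $\mathcal{I}$ — but by the uniqueness just proved, $\beta$ is \emph{the} solution, and since $\alpha$ and hence (by the same Lefschetz argument applied to $\mathcal{I}^n = \theta\wedge\Omega^{n-1} + d\theta\wedge\Omega^{n-1}$) $\beta$ can be taken in $\mathcal{I}^n$, closedness gives $d\beta = d(\text{exact-mod-}\mathcal{I}) = 0$; more cleanly, $\mathcal{I}$ is a $d$-stable ideal and any $\beta \in \mathcal{I}^n$ with $\theta\wedge d\beta = 0$ and $d\beta$ primitive must have $d\beta = 0$ because $d\mathcal{I}^n \subset \mathcal{I}^{n+1} = \Omega^{n+1}M$ meets $\mathcal{J}^{n+1}$ only in $0$. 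I expect the Lefschetz isomorphism step to be the conceptual heart; the rest is careful unwinding of the ideal definitions.
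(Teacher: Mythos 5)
Your existence-and-uniqueness argument via the Lefschetz isomorphism $L = d\theta\wedge\cdot:\Lambda^{n-1}H^*\to\Lambda^{n+1}H^*$ is correct and is indeed the standard mechanism (note the paper only cites \cite{Rumin90,Rumin94} without reproducing a proof). The bookkeeping $\beta = \alpha + \theta\wedge\mu$ with $\mu$ horizontal does capture every $\beta\equiv\alpha\pmod\theta$, since $\theta\wedge\Omega^{n-1}M = \theta\wedge(\text{horizontal }(n-1)\text{-forms})$, and the equation reduces to $L\mu = -(d\alpha)_0$ as you say.

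However, the two remaining claims are not established. For $d\theta\wedge d\beta = 0$: it is \emph{not} true that $d\theta\wedge\beta = 0$ in general (for arbitrary $\alpha$, the $(n+2)$-form $d\theta\wedge\alpha$ has no reason to vanish), and also $d(d\theta\wedge\beta) = +\,d\theta\wedge d\beta$, not $-\,d\theta\wedge d\beta$, since $d\theta$ has even degree. Your alternative route is circular (``the statement $d\beta\in\mathcal{J}^{n+1}\ldots$ already pins down'' is what you are trying to show) and then trails off at ``re-expressing via the Lefschetz decomposition.'' The missing ingredient is $d^2\beta = 0$: since $\theta\wedge d\beta = 0$, write $d\beta = \theta\wedge\psi$ with $\psi$ horizontal; then $0 = d(d\beta) = d\theta\wedge\psi - \theta\wedge d\psi$, and taking the horizontal component (recall $d\theta$ is horizontal) forces $d\theta\wedge\psi = 0$, whence $d\theta\wedge d\beta = \pm\,\theta\wedge d\theta\wedge\psi = 0$.

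For the final claim your ``more cleanly'' step is based on a false assertion: since $\mathcal{I}^{n+1}=\Omega^{n+1}M$, its intersection with $\mathcal{J}^{n+1}$ is $\mathcal{J}^{n+1}$ itself, not $\{0\}$, so that argument proves nothing. The correct route is to exhibit the unique $\beta$ explicitly as an exact form: if $\alpha = \theta\wedge a + d\theta\wedge b \in\mathcal{I}^n$, put $\gamma = d(\theta\wedge b) = d\theta\wedge b - \theta\wedge db$. Then $\gamma\equiv d\theta\wedge b\equiv\alpha\pmod\theta$ and $d\gamma = 0$, so in particular $\theta\wedge d\gamma = 0$; by the uniqueness you already proved, $\beta = \gamma$ and $d\beta = 0$. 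So the conceptual heart you identified (Lefschetz) is right, but the two follow-up claims both need $d^2 = 0$ (resp.\ an explicit exact representative), neither of which appears in your write-up.
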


One can show that $D$ may in fact be defined independently of the
choice of $\theta$. The contact complex is
$$
\Omega^0M\stackrel{d_H}{\longrightarrow}\Omega^1M/
\mathcal{I}^1\stackrel{d_H}{\longrightarrow}
\cdots\stackrel{d_H}{\longrightarrow}\Omega^nM/
\mathcal{I}^n\stackrel{D}{\longrightarrow}
\mathcal{J}^{n+1}\stackrel{d_H}{\longrightarrow}
\mathcal{J}^{n+2}\stackrel{d_H}{\longrightarrow}
\cdots\stackrel{d_H}{\longrightarrow}\mathcal{J}^{2n+1}.
$$
We will write $\EE^k$ to mean either $\Omega^kM/\mathcal{I}^k$ or
$\mathcal{J}^k$. We also have:

\begin{prop}[{\cite[pg. 286]{Rumin94}}]
  \label{prop:contact-deRham}
  The contact complex forms a resolution of the constant sheaf $\R$
  and hence its cohomology coincides with the de Rham cohomology of
  $M$. Moreover the canonical projection $\pi: \Omega^k M \rightarrow
  \Omega^kM / \mathcal{I}^k$ for $k\leq n$ and injection $i :
  \mathcal{J}^k \rightarrow \Omega^k M$ for $k\geq n+1$ induce an
  isomorphism between the two cohomologies.
\end{prop}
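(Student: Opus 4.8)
The plan is to recognise the contact complex as a \emph{fine resolution} of $\R$. Each $\EE^k$ is the space of sections of a finite-rank vector bundle: by the Lefschetz argument recalled below, $\mathcal{I}^k$ (for $k\le n$) and $\mathcal{J}^k$ (for $k\ge n+1$) are subbundles of $\Lambda^kT^*M$ of locally constant rank, so $\EE^k$ is a fine sheaf, while $d_H$ and $D$ are differential operators, hence sheaf morphisms. One checks from $d^2=0$ that $(\EE^*,d_H)$ is a complex; granting that, augmented by $\R\hookrightarrow\EE^0$, it is exact (the local statement addressed below), it is a fine resolution of $\R$, so $H^*(\EE^*,d_H)$ computes the sheaf cohomology $H^*(M;\R)$. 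Applying the same principle to the ordinary de Rham complex yields the de Rham theorem, and hence the first assertion. Exactness at degree $0$ is immediate: $\mathcal{I}^1=\theta\,C^\infty(M)$, so $d_Hf=0$ means $df$ annihilates $H$, and since $H$ is bracket-generating this forces $f$ to be locally constant.

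For exactness in positive degrees I would argue locally, fixing a contact form $\theta$ (by Darboux one may even take the Heisenberg model). Writing $L=d\theta\wedge(\,\cdot\,)$, the form $d\theta$ restricts to a symplectic form on $H$, so $L$ generates a Lefschetz $\mathfrak{sl}_2$-action on $\Lambda^\bullet H^*$; hard Lefschetz then identifies, for $k\le n$, the quotient $\EE^k=\Omega^kM/\mathcal{I}^k$ with the bundle $\EE_0^k\subset\Lambda^kH^*$ of \emph{primitive} horizontal $k$-forms, and dually, for $k\ge n+1$, identifies $\EE^k=\mathcal{J}^k$ with $\theta\wedge\EE_0^{2n+1-k}$. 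In these terms $d_H$ and $D$ become explicit first- and second-order operators on primitive forms. Two routes then give local exactness. The first is Rumin's spectral-sequence argument: filter the de Rham complex by the Heisenberg weight ($\theta$ of weight $2$, $H^*$ of weight $1$); the graded differential is essentially $L$, whose cohomology is the primitive forms, so the $E_1$-page is the contact complex \emph{except} across the middle degrees $n,n+1$, where the change in $L$-cohomology leaves exactly one surviving higher differential, of order two, namely $D$; since this spectral sequence converges to the de Rham cohomology of the (locally contractible) chart, the contact complex is locally exact. The second route is to build from $L^{-1}$ on non-primitive forms explicit chain maps $\Phi\colon(\Omega^*M,d)\to(\EE^*,d_H)$ with $\Phi=\pi$ in degrees $\le n$ and $\Psi\colon(\EE^*,d_H)\to(\Omega^*M,d)$ with $\Psi=i$ in degrees $\ge n+1$, together with homotopies $\Phi\Psi\simeq\id$ and $\Psi\Phi\simeq\id$; the contact complex is then homotopy equivalent, as a complex of sheaves, to the de Rham resolution of $\R$, which again gives the first assertion and, essentially for free, the second.

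The second assertion then follows by homological algebra. Once both complexes are fine resolutions of $\R$ compatibly augmented by $\R\hookrightarrow\EE^0$ and $\R\hookrightarrow\Omega^0M$, any augmentation-preserving morphism between them induces the canonical identification on cohomology, and all such morphisms are chain-homotopic. It therefore suffices to observe that the projections $\pi\colon\Omega^kM\to\EE^k$ ($k\le n$) assemble into a chain map of the truncated complexes---clear, since $d_H$ is by construction induced by $d$---and that the inclusions $i\colon\mathcal{J}^k\to\Omega^kM$ ($k\ge n+1$) do likewise; for the overlap at degrees $n$ and $n+1$ one uses the normalising $\beta$ of the Lemma above to check that a $d_H$-closed class in $\EE^n$, resp.\ a $D$-closed class in $\EE^{n+1}$, corresponds under $\pi$, resp.\ $i$, to the same de Rham class. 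This identifies $\pi_*$ and $i_*$ with the appropriate halves of the canonical isomorphism $H^*_{dR}(M)\cong H^*(\EE^*,d_H)$.

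I expect the main obstacle to be the middle-degree analysis: showing, through the Lefschetz/primitive-form linear algebra, that precisely one higher differential survives across degrees $n,n+1$ and that it is exactly the second-order operator $D$. This is the technical core of Rumin's construction, so in a full write-up I would lean on \cite{Rumin90,Rumin94} for it and spell out here only the sheaf-theoretic packaging and the compatibility of $\pi$ and $i$ with the augmentations.
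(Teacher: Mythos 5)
The paper gives no proof of its own for this proposition, citing \cite{Rumin94} directly; your reconstruction is essentially the argument of that reference. Both of your routes — the Heisenberg-weight spectral sequence whose $E_1$-page is the contact complex away from degrees $n,n+1$, with $D$ surviving as the single higher differential across the middle; and the explicit homotopy equivalence $\Phi,\Psi$ built from $L^{-1}$ on non-primitive forms — occur in the literature the paper cites (\cite{Rumin94,Rumin00}), and the fine-resolution packaging is the right framework.

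One point worth tightening concerns the second assertion. As you note, $\pi$ and $i$ only give chain maps on \emph{truncations}, and they point in opposite directions, so they cannot directly be fed into the ``any two augmentation-preserving morphisms between fine resolutions are chain-homotopic'' principle: neither is a morphism of the full resolutions. The clean way to conclude is the one you mention in passing but should promote to the main line of argument: construct the globally-defined, local (differential-operator) chain maps $\Phi\colon\Omega^*M\to\EE^*$ agreeing with $\pi$ in degrees $\le n$ and $\Psi\colon\EE^*\to\Omega^*M$ agreeing with $i$ in degrees $\ge n+1$, together with homotopies $\Phi\Psi\simeq\id$ and $\Psi\Phi\simeq\id$. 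Then $\Phi_*$ and $\Psi_*=\Phi_*^{-1}$ are the canonical identification of sheaf cohomologies, and since $\Phi_*=\pi_*$ for $k\le n$ and $\Psi_*=i_*$ for $k\ge n+1$, both $\pi_*$ and $i_*$ are isomorphisms in their respective ranges. (That $\pi_*$ is well-defined at degree $n$ and $i_*$ at degree $n+1$ follows, as you indicate, from the normalising $\beta$ in the Lemma: a closed $\alpha\in\Omega^nM$ has $D\pi(\alpha)=0$ because $\beta=\alpha$ works, and a $d$-closed $\omega\in\mathcal{J}^{n+1}$ in the image of $D$ is exact.) With that adjustment, your outline matches the cited proof.
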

The arguments being purely local, these results also apply to twisted
versions of the complex with a flat bundle, as coming from a
representation $\rho : \pi_1(M) \rightarrow U(N)$.

\smallskip

It is a basic fact that the symplectic bundle $(H, d\theta)$ admits a
contractible homotopy class of \emph{calibrated} almost complex
structures, i.e.~$J\in\End (H)$ is in this class if and only if $J^2 =
-1$ and the \emph{Levi metric} $d\theta(\cdot, J\cdot)$ is positive
definite and Hermitian.

The \emph{Reeb field} of $\theta$ is the unique vector field $T$
satisfying $\theta(T) = 1$ and $T \hook d\theta = 0$. Fixing a
$\theta$ and a $J$, we may define a Riemannian metric $g$ on $M$ by
using the Levi metric on vectors in $H$ and declaring that the Reeb
field $T$ is of unit-length and orthogonal to $H$, i.e.
$$
g= d \theta (\cdot, J \cdot) + \theta^2\,.
$$
With these choices, it is straightforward to identify the quotients
of forms appearing in the lower-half of the contact complex with
actual forms (\cite[pg. 288]{Rumin94}):
$$
\Omega^kN/\mathcal{I}^k \cong \{\alpha\in\Omega^kN : T\hook\alpha =
0 = \Lambda\alpha\},
$$
where $\Lambda$ is the adjoint of the operator $L:\Omega^kN \to
\Omega^{k+2}N$, $L\alpha = d\theta\wedge\alpha$.

We henceforth assume that $M$ is compact. With the identification
above we now have an $L^2$ inner product defined on the contact
complex. Let $\delta_H$, $D^*$ denote the formal adjoint operators. It
is straightforward to verify that
\begin{equation}
  \label{eq:5}
  \delta_H{}_{\mid \EE^k} = (-1)^k\ast d_H\ast,\quad D^\ast 
  = (-1)^{n+1}\ast D\ast,
\end{equation}
where $\ast:\EE^k\stackrel{\cong}{\rightarrow}\EE^{2n+1-k}$ is the
usual Hodge $\ast$ operator.

As a last comment here, we mention there exist other approaches to
this elementary construction of the contact complex. One possibility
is via spectral sequence considerations, using a canonical filtration
by Heisenberg weight of forms $\Omega^*M$; see \cite[\S3]{Rumin00} and
\cite{BHR} where this approach is used in the study of the
sub-Riemannian (diabatic) limit of the Hodge--de Rham spectrum.
Another interesting viewpoint is to consider the contact complex as a
curved version of a Bernstein--Gelfand--Gelfand complex in parabolic
geometry; see \cite[\S8.1]{BE} for such a presentation on the
$3$-dimensional Heisenberg group.

\subsection{Determinant bundles, metrics and Reidemeister--Franz
  torsion}

We follow the presentation of Bismut and Zhang \cite{BZ} to define the
Reidemeister--Franz torsion of a finite-dimensional complex.

Let $E$ be a finite-dimensional real vector space, and define the line
$$
\det E = \wedge^{\mathrm{max}} E\,.
$$
A useful convention here is to set $\det\{0\} = \R$ (compatible
with $\det(E \oplus F) = \det E \otimes \det F$). If $\lambda$ is a
line, let $\lambda^{-1} = \lambda^*$ be its dual line.  Then $\lambda
\otimes \lambda^{-1} = \mathrm{End}(\lambda) = \R\, \mathrm{Id}$ is
canonically isomorphic to $\R$.

One extends these notions to a finite-dimensional complex. Let
\begin{displaymath}
  (E,d)\  :\  0 \lra E_0 \stackrel{d}{\lra} E_1 \stackrel{d}{\lra} \cdots 
  \stackrel{d}{\lra} E_n \lra 0 
\end{displaymath}
be such a complex and $H^*(E,d)$ its cohomology. Define
$$
\det E = \bigotimes_{k=0}^n (\det E_k)^{(-1)^k}
$$
and
$$
\det (H^*(E,d)) = \bigotimes_{k=0}^n (\det H^k(E,d))^{(-1)^k}\,.
$$

\begin{prop}[Knudsen--Mumford~\cite{KM}]\label{prop:Knudsen-Mumford}
  The lines $\det E$ and $\det(H^*(E,d))$ are canonically isomorphic.
\end{prop}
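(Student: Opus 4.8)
The plan is to deduce the isomorphism from the single elementary fact that a short exact sequence $0 \lra A \lra B \lra C \lra 0$ of finite-dimensional real vector spaces carries a \emph{canonical} isomorphism $\det B \cong \det A \otimes \det C$. Explicitly, with $p=\dim A$ and $q=\dim C$, one sends $(a_1\wedge\cdots\wedge a_p)\otimes(\bar{c}_1\wedge\cdots\wedge\bar{c}_q)$ to $a_1\wedge\cdots\wedge a_p\wedge\tilde{c}_1\wedge\cdots\wedge\tilde{c}_q\in\det B$, where each $\tilde{c}_i\in B$ is an arbitrary lift of $\bar{c}_i$; the image is independent of the lifts, since replacing some $\tilde{c}_i$ by $\tilde{c}_i+a$ with $a\in A$ adds a term in which $a$ is wedged alongside $a_1,\dots,a_p$ --- that is, $p+1$ vectors of the $p$-dimensional space $A$ --- and hence vanishes. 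No splitting is chosen, so the isomorphism is natural, and it is plainly compatible with dualisation, inducing likewise a canonical isomorphism $(\det B)^{-1}\cong(\det A)^{-1}\otimes(\det C)^{-1}$.

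First I would introduce, for each $k$, the cocycles $Z_k=\ker(d\colon E_k\to E_{k+1})$ and the coboundaries $B_k=\operatorname{im}(d\colon E_{k-1}\to E_k)$, noting $B_0=0$ and $B_{n+1}=0$. The two natural short exact sequences
\begin{displaymath}
  0 \lra Z_k \lra E_k \stackrel{d}{\lra} B_{k+1} \lra 0
  \qquad\text{and}\qquad
  0 \lra B_k \lra Z_k \lra H^k(E,d) \lra 0
\end{displaymath}
yield, by the fact just recalled, canonical isomorphisms $\det E_k\cong\det Z_k\otimes\det B_{k+1}$ and $\det Z_k\cong\det B_k\otimes\det H^k(E,d)$, whence
\begin{displaymath}
  \det E_k\;\cong\;\det B_k\otimes\det H^k(E,d)\otimes\det B_{k+1}\,.
\end{displaymath}

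Raising this to the power $(-1)^k$, tensoring over $0\le k\le n$, and cancelling each factor $\lambda\otimes\lambda^{-1}$ with its canonical copy of $\R$, the coboundary lines telescope: after reindexing, $\bigotimes_{k=0}^{n}(\det B_{k+1})^{(-1)^k}=\bigotimes_{j=1}^{n+1}(\det B_j)^{-(-1)^j}$ pairs off term by term with $\bigotimes_{j=0}^{n}(\det B_j)^{(-1)^j}$, the two extreme factors $\det B_0$ and $\det B_{n+1}$ each being $\R$ by the convention $\det\{0\}=\R$. What survives is exactly
\begin{displaymath}
  \det E\;=\;\bigotimes_{k=0}^{n}(\det E_k)^{(-1)^k}
  \;\cong\;\bigotimes_{k=0}^{n}\bigl(\det H^k(E,d)\bigr)^{(-1)^k}
  \;=\;\det(H^*(E,d))\,.
\end{displaymath}
The only genuine effort is bookkeeping, and the sole point requiring care is to perform every cancellation and reindexing with the canonical pairings alone, letting no auxiliary datum (basis, splitting, or metric) enter at any stage --- this is exactly what upgrades the conclusion from the triviality that the two lines are abstractly isomorphic to the assertion that there is a preferred isomorphism between them. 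One could instead induct on the length $n$, splitting off $E_0$ together with its image in $E_1$ via $0\to B_1\to E_1\to E_1/B_1\to 0$, but the presentation above makes the naturality, and the part played by the signs $(-1)^k$, most transparent.
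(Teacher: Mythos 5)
Your proof is correct and follows essentially the same route as the paper's: both decompose each $\det E_k$ via the two short exact sequences $0\to Z_k\to E_k\to B_{k+1}\to 0$ and $0\to B_k\to Z_k\to H^k\to 0$, and both obtain the result by telescoping the $\det B_j$ factors. The one genuine difference is in how the canonical isomorphism $\det B\cong\det A\otimes\det C$ for a short exact sequence is justified: you give a direct argument by lifting and observing that the ambiguity dies on wedging $p+1$ vectors of the $p$-dimensional subspace $A$, whereas the paper first constructs, for an \emph{acyclic} complex, an explicit canonical section $S(E,d)\in\det E$ and then invokes that construction (applied to the two-term acyclic complexes underlying the short exact sequences) to produce the isomorphisms. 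Your lifting argument is more self-contained and slightly more elementary. The paper's route, although longer for the purposes of this proposition alone, is not wasted: the explicit class $S(F,d)$ built in the acyclic case is precisely what is reused in Proposition~\ref{prop:formula_tau} to compute the torsion $\tau(E,d,g)=\|S(F,d)\|$, so establishing it inside this proof amortises the effort. If you only wanted Knudsen--Mumford as a stand-alone statement your version is cleaner; in the context of this paper the explicit section is needed anyway.
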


\begin{proof} 
  We include the proof as the explicit form of the isomorphism will be
  useful below. We follow~\cite{BGSI}. Suppose first that
  $H^*(E,d)=\{0\}$ so that $\det H^*(E,d)= \R$. Then we need to find a
  canonical section of $\det E$. Let $N_k = \dim E_k$. Pick a
  non-vanishing element
  \begin{equation}
    \label{eq:6}
    s_0 = e_1 \wedge e_2 \wedge \cdots \wedge
    e_{N_0} \in \wedge^{N_0}E_0 = \det E_0\,;
  \end{equation}
  then
  \begin{equation}
    \label{eq:7}
    d s_0= de_1
    \wedge de_2 \wedge \cdots \wedge de_{N_0} \in \wedge^{N_0}E_1
  \end{equation}
  is non-vanishing since $d:E_0 \rightarrow E_1$ is injective.
  
  Next pick $s_1 \in \wedge^{N_1 - N_0} E_1$ such that $ds_0 \wedge
  s_1 $ generates $\det E_1$, and so on, taking $s_k \in \wedge^{N_i-
    \cdots +(-1)^kN_0} E_i$ such that $d s_{k-1} \wedge s_k$ generates
  $\det E_k$. Consider now
  \begin{equation}
    \label{eq:8}
    S(E,d) = s_0 \otimes (ds_0 \wedge s_1)^{-1} \otimes (ds_1 \wedge
    s_2) \otimes \cdots \otimes (ds_{n-1})^{(-1)^n} \in \det E.
  \end{equation}
  It is clear that the class $S(E,d)$ is non-zero and does not depend
  on the choices of $s_k$ for $k=0,\dots,n$, completing the proof of
  the proposition in the acyclic case.
  
  For the general case, observe that the determinants of the short
  exact sequences
  \begin{displaymath}
    \begin{matrix}
      0 & \rightarrow & dE_k &\rightarrow & \ker d{}_{\mid E_{k+1}} &
      \rightarrow & H^{k+1}(E,d)
      & \rightarrow & 0\\
      0 &\rightarrow & \ker d{}_{\mid E_{k+1}} & \rightarrow & E_{k+1}
      & \rightarrow & dE_{k+1} & \rightarrow & 0
    \end{matrix}
  \end{displaymath}
  each have a canonical element, as was just shown above. So we have
  canonical isomorphisms
  \begin{align*}
    \det (\ker d{}_{\mid E_{k+1}}) &\cong \det(dE_k) \otimes
    \det(H^{k+1}(E,d))\\
    \det (E_{k+1}) &\cong \det(\ker d{}_{\mid E_{k+1}}) \otimes
    \det(dE_{k+1}),
  \end{align*}
  and then
  $$
  \det (E_{k+1}) \cong \det(dE_k) \otimes \det(H^{k+1}(E,d))
  \otimes \det(dE_{k+1}).
  $$
  Finally taking tensor products over $k$ gives
  $$
  \det E_0 \otimes (\det E_1)^{-1} \otimes \det E_2 \otimes \cdots
  \cong \det H^0 \otimes (\det H^1)^{-1} \otimes \det H^2 \otimes
  \cdots
  $$
  canonically.
\end{proof}

Suppose now $E$ is given a metric $g$. Hence $\det E$ has an induced
metric. One can then define a metric on $\det H^*(E,d)$ by
$$
\|\quad \|_{\det H^*(E,d)} = \|\quad \|_{\det E},
$$
using the canonical isomorphism given by
Proposition~\ref{prop:Knudsen-Mumford}.

Let $d^* = \delta$ be the adjoint of $d$. By finite-dimensional Hodge
theory, $H^*(E,d)$ identifies with the harmonic forms
$$
\mathcal{H}^*(E,d) = \{ s \in E \mid ds = d^* s = 0\}\,.
$$
By their inclusion in $E$, the harmonic forms inherit a metric. We
then have a a second metric $| \quad|_{\det H^*(E,d)}$ on $\det
H^*(E,d)$ via the above identification.

\begin{defn}\label{def:metr-franz-reid}
  The \emph{torsion} of the complex $(E,d)$ with metric $g$ is the
  ratio
  $$
  \tau(E,d,g) = \frac{\|\quad \|_{\det H^*(E,d)}}{ |\quad |_{\det
      H^*(E,d)}}\,.
  $$
\end{defn}
\begin{rem}
  \label{rem:R-torsion}
  Note that this definition of torsion, given in \cite[\S2]{BZ} for
  instance, is quite natural at the metric level, but actually leads
  to the \emph{inverse} of the original Reidemeister--Franz torsion
  (or $R$-torsion), i.e.
  \begin{displaymath}
    \tau_R (E,d,g) = 1/\tau(E,d,g)\,;
  \end{displaymath}
  see \cite[\S1]{RS} and \cite[\S2]{Fried}.
\end{rem}
One can be more explicit using the proof of
Proposition~\ref{prop:Knudsen-Mumford}. Consider
$$
F= \mathcal{H}^*(E,d)^\bot\,.
$$
The complex $(F,d)$ is acyclic so we can construct the canonical
class $S(F,d)$ as in \eqref{eq:8}.
\begin{prop}\label{prop:formula_tau}
  For any generators $s_i\in\det (\ker d)^\bot$ we have
  \begin{equation}
    \label{eq:9}
    \tau(E,d,g) = \|S(F,d)\|_{\det F} = \Bigl(\frac{\|s_0\|}{
      \|d s_0\|}\Bigr) \times 
    \Bigl(\frac{\|s_1\|}{ \|d s_1\|}\Bigr)^{-1} \times \cdots \times
    \Bigl(\frac{\|s_{n-1}\|}{ \|d s_{n-1}\|}\Bigr)^{(-1)^{n-1}} \,.
  \end{equation}
  
\end{prop}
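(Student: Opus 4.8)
The plan is to establish the two equalities of \eqref{eq:9} separately: the first by splitting off the harmonic part and reducing to the acyclic complex $(F,d)$, the second by evaluating $\|S(F,d)\|_{\det F}$ by hand.

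For the first equality I would use the orthogonal Hodge decomposition $E=\mathcal{H}^*(E,d)\oplus F$, which is a decomposition of complexes since $d$ vanishes on $\mathcal{H}^*(E,d)$ and preserves $F$. Degreewise orthogonality of $\mathcal{H}^k(E,d)$ and $F_k$ gives an isometric identification
$$\det E\ \cong\ \det\mathcal{H}^*(E,d)\otimes\det F,\qquad \det\mathcal{H}^*(E,d)=\bigotimes_k\bigl(\det\mathcal{H}^k(E,d)\bigr)^{(-1)^k}.$$
I would then read off from the construction in the proof of Proposition~\ref{prop:Knudsen-Mumford} that the Knudsen--Mumford isomorphism respects this splitting: on the acyclic summand $(F,d)$ it is the map $\det F\to\det H^*(F,d)=\R$ sending the canonical class $S(F,d)$ of \eqref{eq:8} to $1$, while on $(\mathcal{H}^*(E,d),0)$ it is the identity $\det\mathcal{H}^*(E,d)=\det H^*(\mathcal{H}^*(E,d),0)$, which under the Hodge identification $H^*(E,d)\cong\mathcal{H}^*(E,d)$ is exactly the isomorphism defining $|\quad|_{\det H^*(E,d)}$. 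Consequently, if $\bar\mu\in\det H^*(E,d)$ satisfies $|\bar\mu|=1$, it corresponds to a unit vector $\mu\in\det\mathcal{H}^*(E,d)$ and the Knudsen--Mumford isomorphism sends $\mu\otimes S(F,d)\in\det E$ to $\bar\mu$; hence
$$\tau(E,d,g)=\frac{\|\bar\mu\|_{\det H^*(E,d)}}{|\bar\mu|_{\det H^*(E,d)}}=\|\mu\otimes S(F,d)\|_{\det E}=\|\mu\|\,\|S(F,d)\|_{\det F}=\|S(F,d)\|_{\det F}.$$

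For the second equality I would compute $\|S(F,d)\|_{\det F}$ directly from \eqref{eq:8}. With $s_k\in\det(\ker d{}_{\mid F_k})^\bot$ and $(F,d)$ acyclic, one has the orthogonal splitting $F_k=dF_{k-1}\oplus(\ker d{}_{\mid F_k})^\bot$ with $d$ restricting to an isomorphism $(\ker d{}_{\mid F_k})^\bot\ \stackrel{\sim}{\longrightarrow}\ dF_k=\ker d{}_{\mid F_{k+1}}$; so $ds_{k-1}$ generates $\det(dF_{k-1})$, $s_k$ generates the orthogonal complement, and therefore $\|ds_{k-1}\wedge s_k\|_{\det F_k}=\|ds_{k-1}\|\,\|s_k\|$, the top factor being $s_n\in\det\{0\}=\R$ of trivial norm. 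Substituting these into the alternating tensor product \eqref{eq:8} and collecting the powers of the $\|s_k\|$ and of the $\|ds_k\|$ makes the products telescope to $\prod_{k=0}^{n-1}(\|s_k\|/\|ds_k\|)^{(-1)^k}$, which is the right-hand side of \eqref{eq:9}.

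The point that needs care is the naturality assertion in the first step: that the Knudsen--Mumford isomorphism really does factor through $E=\mathcal{H}^*(E,d)\oplus F$ as the Hodge identification on the harmonic part tensored with evaluation at $S(F,d)$ on the acyclic part. This comes down to unwinding the canonical elements of the short exact sequences used in the proof of Proposition~\ref{prop:Knudsen-Mumford} on bases adapted to the decomposition, keeping the sign conventions of the alternating tensor products consistent throughout; once this is settled, the rest is routine linear algebra with orthonormal adapted bases.
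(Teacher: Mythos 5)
Your proof is correct and follows essentially the same route as the paper: split $E=\mathcal{H}^*(E,d)\oplus F$ to reduce $\tau(E,d,g)$ to $\|S(F,d)\|_{\det F}$, then compute the latter from \eqref{eq:8} using the orthogonality relation $\|ds_i\wedge s_{i+1}\|=\|ds_i\|\,\|s_{i+1}\|$ to make the products telescope. The only difference is that you spell out in more detail why the Knudsen--Mumford isomorphism factors through the Hodge decomposition, a point the paper treats as immediate from the construction.
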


\begin{proof}
  The splitting $E = \mathcal{H}^*(E,d) \oplus F$ induces the
  canonical isomorphism
  \begin{align*}
    \det \mathcal{H}^*(E,d) & \stackrel{\cong}{\lra} \det E = \det
    \mathcal{H}^*(E,d)
    \otimes \det F \\
    s & \longmapsto s \otimes S(F,d)\,.
  \end{align*}
  Then
  $$
  \|s \otimes S(F,d)\|_{\det E} = |s|_{\det H^*(E,d)}
  \|S(F,d)\|_{\det F}\,
  $$
  and by Definition~\ref{def:metr-franz-reid}
  \begin{align*}
    \tau(E,d,g) & = \|S(F,d)\|_{\det F} \\
    & = \|s_0\|\times \|ds_0 \wedge s_1\|^{-1}\times \|ds_1 \wedge
    s_2\| \times \cdots \times  \|ds_{n-1}\|^{(-1)^n}\\
    & = \Bigl(\frac{\|s_0\|}{ \|ds_0\|}\Bigr) \times
    \Bigl(\frac{\|s_1\|}{ \|ds_1\|}\Bigr)^{-1} \times \cdots \times
    \Bigl(\frac{\|s_{n-1}\|}{ \|ds_{n-1}\|}\Bigr)^{(-1)^{n-1}} \,,
  \end{align*}
  since if $s_i\in \det (\ker d)^\bot$, $\|d s_i \wedge s_{i+1}\| =
  \|d s_i\| \|s_{i+1}\|$.
\end{proof}

At this point in the Riemannian case (\cite{RS, BZ}) one can guess the
correct formula for analytic torsion by considering the
Reidemeister--Franz torsion of finite-dimensional subcomplexes that
approximate the infinite-dimensional de Rham complex $(\Omega^*, d)$.
A natural choice of subcomplexes is obtained here by taking cut-off de
Rham complexes using the spectrum of the Hodge--de Rham Laplacian
$\Delta = d\delta + \delta d$; that is, one considers the energy
levels $\Omega^*_{[0,\lambda]} = \{\Delta \leq \lambda\}$. One then
expresses \eqref{eq:9} using the determinants of $\Delta$ on
$(\Omega^*_{[0,\lambda]},d)$ and finally as combinations of
differentiated zeta functions $\zeta'(\Delta)(0)$ in the limiting
infinite-dimensional case. We carry out this procedure for the contact
complex next.

\subsection{Defining a contact analytic torsion}

Consider now the contact complex
\begin{displaymath}
  (\EE,d_H)\  :\  \EE^0 \stackrel{d_H}{\lra} \EE^1
  \stackrel{d_H}{\lra} \cdots  
  \stackrel{d_H}{\lra} \EE^n \stackrel{D}{\lra}
  \EE^{n+1}\stackrel{d_H}{\lra} \cdots \stackrel{d_H}{\lra} \EE^{2n+1}. 
\end{displaymath}
We want to define finite-dimensional subcomplexes of the contact
complex via finite energy cut-offs for a certain Laplacian $\Delta$.

We shall use the following uniformly fourth-order Laplacian:
\begin{equation}
  \label{eq:10}
  \Delta := 
  \begin{cases}
    (d_H\delta_H + \delta_Hd_H)^2 & \textnormal{ on $\EE^k$ for }
    k\neq n,n+1\\
    (d_H\delta_H)^2 + D^\ast D & \textnormal{ on }\EE^n\\
    DD^\ast + (\delta_Hd_H)^2 & \textnormal{ on }\EE^{n+1}.
  \end{cases}
\end{equation}

We denote by $\Delta_k$ the restriction of $\Delta$ to $\EE^k$. The
rationale behind our choice of $\Delta$ is as follows. In middle
degrees, because $D$ is second-order, one needs to square the terms
involving $d_H$ so that $\Delta$ has certain good analytical
properties. In particular, $\Delta$ is maximally hypoelliptic and
invertible in the Heisenberg symbolic calculus, while the standard
combinations $d_H\delta_H + D^* D$ and $DD^* + d_H \delta_H$ are not;
see \S\ref{subsec:heisenberg} below. If we then consider the spectral
spaces $E^n_{[0, \lambda]} = \{ \Delta_n \leq\lambda\}$ as successive
finite-dimensional approximations of $\EE^n$, in order to include
$E^n_{[0,\lambda]}$ in a finite-dimensional subcomplex of $(\EE^*,d)$
we need the Laplacians outside middle degree to be fourth-order also.

\begin{rem}
  Note that $\Delta$ is different to the Laplacian $\Delta_Q$ defined
  in~\cite{Rumin94}. The latter was defined with nice algebraic
  properties, namely commutativity with $J$ when $\LL_T J = 0$.  On
  the other hand, observe that $\Delta$ commutes with $d_H, D$ and
  their adjoints, which, as we shall see in
  \S\ref{sec:vari-behav-analyt}, is essential for analytic torsion of
  the contact complex having the correct variational behaviour.
  Moreover note that $\Delta$ is the Laplacian appearing in the
  sub-Riemannian limit (\cite{Rumin00}).
\end{rem}

Next let us set
$$
E^*_{[0,\lambda]} = \bigoplus_{k= 0}^{2n+1}\{\Delta_k \leq \lambda
\} \,.
$$
These are finite-dimensional subcomplexes of the contact complex,
each of which splits into a direct sum of eigenspaces $E^*_\mu$ of
$\Delta$ for $0\leq \mu \leq \lambda$.

\begin{prop}
  \label{prop:Reidemeister-contact}
  The torsion of $(E^*_{[0,\lambda]}, d_H)$ is
  \begin{equation}
    \label{eq:11}
    \tau (E^*_{[0,\lambda]}, d_H) = \prod_{k=0}^{2n+1} \det
    (\Delta_k\mid E^*_{]0, 
      \lambda]})^{(-1)^k w(k)/4}\,,
  \end{equation}
  where
  \begin{equation}
    \label{eq:12}
    w(k) =
    \begin{cases}
      k & \mathrm{if}\quad k \leq n\\
      k+1 & \mathrm{if} \quad k > n\,.
    \end{cases}
  \end{equation}
\end{prop}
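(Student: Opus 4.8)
The plan is to diagonalise $\Delta$ and reduce \eqref{eq:11} to a one-eigenvalue computation fed into the explicit torsion formula \eqref{eq:9}. Because $D\circ d_H=0=d_H\circ D$ in the contact complex, the adjoint relations $\delta_H\circ D^{*}=0=D^{*}\circ\delta_H$ hold, and together with \eqref{eq:5} they show that $\Delta$ commutes with $d_H$, $\delta_H$, $D$ and $D^{*}$ (this is the commutation property already announced in the Remark after \eqref{eq:10}). Hence the finite-dimensional complex $(E^{*}_{[0,\lambda]},d_H)$ splits as an orthogonal direct sum of subcomplexes $(E^{*}_{\mu},d_H)$, one for each eigenvalue $0\le\mu\le\lambda$ of $\Delta$. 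The torsion of Definition~\ref{def:metr-franz-reid} is multiplicative over orthogonal direct sums of complexes (the line $\det$, the Knudsen--Mumford isomorphism of Proposition~\ref{prop:Knudsen-Mumford}, and the induced metrics all are), and the $\mu=0$ summand carries the full cohomology, so its torsion is $1$; thus $\tau(E^{*}_{[0,\lambda]},d_H)=\prod_{0<\mu\le\lambda}\tau(E^{*}_{\mu},d_H)$.

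Next I would fix $\mu>0$. Any $e\in E^{k}_{\mu}$ annihilated by the outgoing differential and by the adjoint of the incoming one lies, by the explicit shape of $\Delta_k$ in \eqref{eq:10}, in $\ker\Delta_k$, forcing $e=0$; so $(E^{*}_{\mu},d_H)$ is acyclic and Proposition~\ref{prop:formula_tau} applies. Taking $s_k$ an orthonormal basis of $(\ker\partial_k|_{E^{k}_{\mu}})^{\perp}$, where $\partial_k$ is the degree-$k$ differential (equal to $D$ at degree $n$), it gives $\tau(E^{*}_{\mu},d_H)=\prod_{k=0}^{2n}\|\partial_k s_k\|^{(-1)^{k+1}}$. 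The crux is then the norm distortion of $\partial_k$. Finite-dimensional Hodge theory identifies $(\ker\partial_k|_{E^{k}_{\mu}})^{\perp}$ with $\ker\delta_H$ for $k\le n$ and with $\ker D^{*}$ for $k=n+1$. On such a "coexact" subspace $\Delta_k$ from \eqref{eq:10} collapses: the term $D^{*}D$ survives at degree $n$, and the term $(\delta_H d_H)^{2}$ survives at every other degree (the other term dies because $\delta_H e=0$, resp.\ $D^{*}e=0$; the subspaces are preserved by the surviving operators thanks to $\delta_H D^{*}=0=D^{*}\delta_H$ near the middle). Since $D^{*}D$, resp.\ $\delta_H d_H=d_H^{*}d_H$, is nonnegative, it equals $\mu\,\mathrm{Id}$, resp.\ $\sqrt\mu\,\mathrm{Id}$, there; hence $\{\partial_k e_i\}$ is orthogonal with $\|\partial_k e_i\|$ equal to $\sqrt\mu$ at degree $n$ and $\mu^{1/4}$ otherwise, so $\|\partial_k s_k\|=\mu^{m_n^{\mu}/2}$ at degree $n$ and $\mu^{m_k^{\mu}/4}$ otherwise, with $m_k^{\mu}:=\dim(\ker\partial_k|_{E^{k}_{\mu}})^{\perp}$.

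It then remains to do the bookkeeping. By acyclicity $\dim E^{k}_{\mu}=m_{k-1}^{\mu}+m_k^{\mu}$, so the previous step gives $4\log\tau(E^{*}_{\mu},d_H)=\bigl(\sum_{k\ne n}(-1)^{k+1}m_k^{\mu}+2(-1)^{n+1}m_n^{\mu}\bigr)\log\mu$. On the other hand, regrouping by the index of $m^{\mu}$,
\begin{equation*}
  \sum_{k}(-1)^{k}w(k)\dim E^{k}_{\mu}=\sum_{j}(-1)^{j}\bigl(w(j)-w(j+1)\bigr)m_j^{\mu},
\end{equation*}
and \eqref{eq:12} gives $w(j)-w(j+1)=-1$ for $j\ne n$ and $=-2$ for $j=n$, so the two right-hand sides coincide. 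Therefore $\tau(E^{*}_{\mu},d_H)=\prod_{k}\mu^{(-1)^{k}w(k)\dim E^{k}_{\mu}/4}$, and multiplying over $0<\mu\le\lambda$ yields \eqref{eq:11}, since $\det(\Delta_k|_{E^{*}_{]0,\lambda]}})=\prod_{0<\mu\le\lambda}\mu^{\dim E^{k}_{\mu}}$.

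The only genuine work is the norm-distortion step: sorting out the cases around degrees $n$ and $n+1$ and checking that $\Delta_k$ really reduces to $D^{*}D$ or $(\delta_H d_H)^{2}$ on the coexact subspace, which rests on the vanishing identities $Dd_H=d_HD=0$ and their adjoints. Everything else parallels the Riemannian computation of \cite{RS}, up to the uniform fourth order — responsible for the exponent $1/4$ in place of $1/2$ — and the degree-$n$ correction, which is absorbed precisely by the weight jump $w(n+1)=w(n)+2$.
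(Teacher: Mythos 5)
Your proof is correct and follows essentially the same route as the paper: split $E^*_{[0,\lambda]}$ into $\Delta$-eigenspaces, apply Proposition~\ref{prop:formula_tau} to each acyclic $(E^*_\mu,d_H)$, read off the norm distortion $\mu^{1/4}$ (resp.\ $\mu^{1/2}$ at degree $n$) from the collapse of $\Delta_k$ on the coexact subspaces, and do the dimension bookkeeping via $\dim E^k_\mu = m^\mu_{k-1}+m^\mu_k$. The only cosmetic difference is that you pass from the $m^\mu_j$-count to $\sum_k(-1)^k w(k)\dim E^k_\mu$ by an Abel reindexing using $w(j)-w(j+1)$, whereas the paper instead inverts $e_k=N_k+N_{k-1}$ and invokes $\sum_k(-1)^k e_k=0$; the two computations are equivalent.
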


\begin{rem}
  This $w(k)$ is the natural weight of $\EE^k$ in the contact complex;
  see \S\ref{subsec:heisenberg} and \cite[\S3]{Rumin00}.
\end{rem}

\begin{proof}
  We use Proposition \ref{prop:formula_tau}.  By the orthogonal
  splitting $\dsp E^*_{[0,\lambda]} = \oplus_{\mu \leq \lambda}
  E_\mu^*$ and \eqref{eq:9} we have
  \begin{equation}
    \label{eq:13}
    \tau (E^*_{[0,\lambda]}, d_H) = \prod_{0< \mu \leq \lambda}
    \|S(E_{\mu}, d_H)\| .
  \end{equation}
  Given $\mu \in ]0, \lambda]$ we pick an orthogonal basis $(v_1,
  \cdots, v_{N_k})$ of each eigenspace
  $$
  E^k_\mu \cap (\ker d)^\bot = F_\mu^k
  $$
  and choose the elements $ s_i = v_1 \wedge \cdots \wedge v_{N_k}
  \in \det F_\mu^k$ as in \eqref{eq:6}. Here $N_k = \dim F_\mu^k$. Now
  by \eqref{eq:7} and orthogonality
  \begin{align*}
    \|ds_i\| & = \|dv_1 \wedge dv_2 \wedge \cdots dv_{N_k}\|\\
    & =
    \|dv_1\|\times \|dv_2\| \times \cdots \times \|dv_{N_k} \|\\
    & =
    \begin{cases}
      \mu^{N_k/4} \|v_1\| \times \cdots \times\|v_{N_k}\| =
      \mu^{N_k/4}\| s_i\|&
      \textnormal{ for }k \not=n  \\
      \mu^{N_k/2} \|v_1\| \times \cdots \times\|v_{N_k}\| =
      \mu^{N_k/2} \|s_i\| &\textnormal{ for } k=n\,.
    \end{cases}
  \end{align*}
  
  Therefore from \eqref{eq:9}
  \begin{equation}
    \label{eq:14}
    \|S(E_{\mu}, d_H)\| = \mu^{M_\mu}
  \end{equation}
  with
  \begin{equation*}
    4M_\mu  = - N_0 + N_1- N_2 \cdots + (-1)^{n+1} 2 N_n + (-1)^n
    N_{n+1} \cdots - N_{2n} \,.
  \end{equation*}
  Let $e_k = \dim E^k_\mu$. From $E^k_\mu = F^k_\mu \oplus d
  F^{k-1}_\mu$, one has $e_k = N_k + N_{k-1}$ and therefore
  $$
  N_k = e_k - e_{k-1} + \cdots+ (-1)^k e_0.
  $$
  This leads to
  \begin{align*}
    4 M_\mu & = -(2n+2) e_0 + (2n+1) e_1 + \cdots + (-1)^n (n+3)
    e_{n-1}
    \\
    & \quad +(-1)^{n+1} (n+2) e_n + (-1)^{n+2} n e_{n+1} \\
    & \quad + (-1)^{n+3} (n-1) e_{n+2} + \cdots + e_{2n+1}\\
    & = \sum_{k=0}^{2n+1} (-1)^k w(k) e_k \,,
  \end{align*}
  using that $\dsp \sum_{k=0}^{2n+1} (-1)^k e_k =0$ for the acyclic
  complex $(E_\mu^*,d_H)$.
  
  As $\Delta = \mu$ on $E^*_\mu$, equation \eqref{eq:14} reads
  $$
  \|S(E_\mu, d_H)\|^4 = \prod_{k=0}^{2n+1} \det (\Delta_{k} \mid
  E_\mu)^{(-1)^k w(k)},
  $$
  leading by \eqref{eq:13} to the assertion of the proposition.
\end{proof}

We finally introduce zeta functions of the contact Laplacian. If
$\spec^*(\Delta_k)$ denotes the non-zero spectrum of $\Delta_k$ on
$\EE_k$, then we take
$$
\zeta (\Delta_k)(s) = \dim H^k(\EE,d_H) + \sum_{\lambda \in
  \spec^*(\Delta_k) } \lambda^{-s}.
$$
Note that by hypoellipticity (or
Proposition~\ref{prop:contact-deRham}) $\dim H^k(\EE,d_H)$ is finite.
By the results in \S\ref{subsec:heisenberg} below, $\zeta
(\Delta_k)(s)$ admits a meromorphic extension to $\C$ that is regular
at $s=0$. On each subcomplex $(E^*_{[0,\lambda]},d_H)$ we then have
\begin{align*}
  \zeta'(\Delta_k \mid E_{]0, \lambda]})(0) & = - \sum_{\mu \in
    \spec^*(\Delta_k)\cap ]0, \lambda]} \ln \mu \\
  & = - \ln\det (\Delta_k \mid E_{]0, \lambda]}) \,.
\end{align*}
Thus formula \eqref{eq:11} for the torsion of $(E^*_{[0,
  \lambda]},d_H)$ can be written for $\lambda >0$ as
\begin{equation}
  \label{eq:15}
  \ln \tau (E^*_{[0, \lambda]},d_H) = 
  \frac{1}{4}\sum_{k=0}^{2n+1} (-1)^{k+1}
  w(k) \zeta'(\Delta_k \mid E^*_{]0, \lambda]})(0)\,.
\end{equation}
We thus speculate in extending this formula to the whole contact
complex by defining the \emph{analytic torsion of the contact complex}
as
\begin{equation}
  \label{eq:16}
  \ln T_{C} = \frac{1}{4} \sum_{k=0}^{2n+1} (-1)^{k+1}
  w(k) \zeta'(\Delta_k)(0)\,.
\end{equation}
This formula is very similar to that of Ray--Singer analytic torsion
$T_{RS}$ in the Riemannian setting. Namely, from \cite[Definition
1.6]{RS}, in dimension $N$
\begin{equation}
  \label{eq:17}
  \ln T_{RS} =  \frac{1}{2}\sum_{k=0}^N (-1)^k
  k \zeta'(\Delta_k)(0)\,,
\end{equation}
for Hodge--de Rham Laplacians $\Delta_k$. Note however the sign
convention: $T_{RS}$ coincides with Reidemeister--Franz torsion
$\tau_R$ on finite-dimensional cut-off de Rham complexes, while our
$T_C$ leads to the inverse; see Remark~\ref{rem:R-torsion}.

\smallskip

By analogy with Definition~\ref{def:metr-franz-reid}
and~\cite{Quillen,BZ}, we also define a \emph{contact complex
  Ray--Singer metric} on $\det H^*(\EE,d_H)$ by setting
\begin{equation}
  \label{eq:18}
  \|\quad \|_C = T_C \mid \quad |_{L^2(\mathcal{E})}\,.
\end{equation}
Here $ |\quad |_{L^2(\mathcal{E})}$ is the $L^2$ metric induced on
$\det H^*(\mathcal{E},d_H)$ by identification of $H^*(\EE,d_H)$ with
harmonic forms $\mathcal{H}^*(\EE,d_H) \subset \EE^*$.

Again, note that the Ray--Singer metric on the de Rham determinant
$\det H^*(\Omega^*M, d)$ reads instead
\begin{equation}
  \label{eq:19}
  \|\quad \|_{RS} = (T_{RS})^{-1} |\quad |_{L^2(\Omega^*M)}\, .
\end{equation}

\smallskip

More generally, we can twist the contact complex with a flat bundle
and then define the analytic torsion of this twisted contact complex
$(\mathcal{E}^*_\rho, d_H)$.  Indeed let $\rho : \pi_1(M)\to U(N)$ be
a unitary representation on $\C^N$. Associated to $\rho$ is an
Hermitian complex rank $N$ vector bundle $V_\rho$ equipped with a
canonical metric-preserving flat connection $\nabla_\rho$. One sets
$\mathcal{E}_\rho = \mathcal{E} \otimes V_\rho$ and $d_H(\alpha
\otimes s) = d_H \alpha \otimes s$ for parallel $s$. From this we may
define the contact analytic torsion $T_C(\rho)$ with associated
contact complex Ray--Singer metric on $\det H^*(\EE_{\rho},d_H)$.

The conciseness of notations $T_C$ and $T_C(\rho)$ should not be
misleading. The (twisted) contact complex only depends on the contact
structure $H$ on $M$ (and $\rho$), but the spectral invariants $T_C$
and $T_C(\rho)$ also depend on the choices of a contact form $\theta$
and complex structure $J$, both being used in the metric $g$.

\smallskip

Since we have defined this analytic torsion through algebraic and
formal considerations around Reidemeister--Franz torsion, we now need
to study its analytical properties. That is the purpose of the next
section.

\section{Heat kernels and variational behaviour of the torsion}
\label{sec:heat-kern-vari}

We first gather some properties of zeta functions and the heat
development of hypoelliptic operators such as the Laplacian of the
contact complex.

\subsection{Heat kernels and zeta functions for hypoelliptic
  operators}
\label{subsec:heisenberg}

The Laplacian $\Delta$ for the contact complex is not elliptic.
However there is a (substantially more intricate) symbolic calculus
that can be applied to it to obtain results on heat kernels
qualitatively analogous to the elliptic case. This calculus is called
the \emph{Heisenberg calculus} and was introduced by
Beals--Greiner~\cite{BG} and Taylor~\cite{Taylor}. A short account of
its properties may be found in \cite{Getzler}, and its use for the
contact complex has been presented by Julg and Kasparov in
\cite[\S5]{JK}. This calculus has also been developed in a more
general setting that includes the contact case; see the presentation
of Ponge~\cite{PongeAMS} for details and further references to the
literature. Here we just briefly sketch the results that we shall need
in the sequel.

\begin{prop}[{Ponge~\cite[Ch.~5]{PongeAMS}}]
  \label{prop:hypoheat}
  Let $\VV$ be a vector bundle over a compact contact manifold $(M,H)$
  of dimension $2n+1$. Let $P:C^\infty(M,\VV)\to C^\infty(M,\VV)$ be a
  differential operator of even Heisenberg order $v$ that is
  selfadjoint and bounded from below. If $P$ satisfies the Rockland
  condition at every point then the principal symbol of $P +
  \partial_t$ is an invertible Volterra--Heisenberg symbol and as
  $t\searrow 0$ the heat kernel $k_t(x,x)$ of $P$ on the diagonal has
  the following asymptotics in $C^\infty(M,(\End
  \VV)\otimes|\Lambda|(M))$:
  \begin{equation*}
    % \label{eq:asym}
    k_t(x,x) \sim \sum_{j=0}^\infty t^{\frac{2(j-n-1)}{v}}a_j(P)(x).
  \end{equation*}
\end{prop}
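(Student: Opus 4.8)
The plan is to prove this through the Volterra--Heisenberg pseudodifferential calculus, the parabolic (heat-variable) refinement of the Heisenberg calculus of Beals--Greiner and Taylor; the argument is its systematic application to $P$, in the spirit of Ponge~\cite[Ch.~5]{PongeAMS}. \emph{Step 1 (the calculus).} On $(M,H)$ the Heisenberg filtration assigns weight $1$ to vector fields tangent to $H$ and weight $2$ to a transverse (Reeb) direction, so the osculating tangent structure at $x\in M$ is the Heisenberg group $G_xM$ of homogeneous dimension $2n+2$. To handle $P+\partial_t$ one works on $M\times\R_t$ with Volterra--Heisenberg symbols $q(x,\xi,\tau)$: symbols in $(\xi,\tau)$ that are parabolically homogeneous with the covariable $\tau$ of $t$ assigned weight $v$, so that $i\tau+\sigma_v(P)(x,\xi)$ is homogeneous of degree $v$ and $P+\partial_t$ is a Volterra--Heisenberg operator of order $v$; such symbols are required moreover to extend holomorphically to $\Im\tau<0$ with inverse $\tau$-Fourier transform supported in $\{t\ge 0\}$, the causality condition that singles out the \emph{forward} heat parametrix. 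Composition is governed by the group convolution on $G_xM$, exactly as in the Heisenberg calculus, and preserves the class.

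\emph{Step 2 (inverting the principal symbol --- the key step).} One must show that $i\tau+\sigma_v(P)(x,\xi)$ is an invertible Volterra--Heisenberg symbol, i.e.\ that for each fixed $x$ it is invertible as a left-invariant convolution operator on $G_xM\times\R_t$. This is where the Rockland condition enters: it says that under every nontrivial irreducible unitary representation $\pi$ of $G_xM$ the model operator $P_x$ is injective on smooth vectors; together with the (formal) selfadjointness and semiboundedness inherited from $P$, this makes $P_x$ essentially selfadjoint and positive under every such $\pi$, hence the generator of a smoothing heat semigroup $e^{-tP_x}$. One then identifies the space-time convolution kernel $(g,t)\mapsto\mathbf 1_{t>0}\,p^x_t(g)$ of $e^{-tP_x}$ as a homogeneous Volterra--Heisenberg kernel whose symbol inverts $i\tau+\sigma_v(P)(x,\cdot)$, uniformly and smoothly in $x$; this is the first assertion of the proposition. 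The substantial content is that the Rockland condition forces exactly the right homogeneity and rapid decay of the $p^x_t$, uniformly in $x$, which rests on Rockland/Helffer--Nourrigat-type invertibility results for the osculating group, adapted to the Volterra setting.

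\emph{Step 3 (parametrix and reduction).} With the principal symbol inverted, the standard Neumann iteration on homogeneous symbol components yields a properly supported Volterra--Heisenberg operator $Q$ of order $-v$ with $(P+\partial_t)Q=\mathrm{Id}-R$, where $R$ is smoothing in the Volterra sense, i.e.\ its kernel is $O(t^\infty)$ as $t\searrow 0$ uniformly with all $x$-derivatives. Since $P$ is selfadjoint and bounded below, $e^{-tP}$ is a well-defined semigroup, smoothing for $t>0$ by the hypoellipticity that the Rockland condition provides, and is the forward fundamental solution of $P+\partial_t$; comparison with $Q$ shows the Schwartz kernel of $Q$ equals $k_t(x,y)$ modulo an $O(t^\infty)$ error in $C^\infty$, so the on-diagonal asymptotics of $k_t$ are those of $Q$. \emph{Step 4 (on-diagonal expansion).} Writing $Q\sim\sum_{l\ge 0}Q_{-v-l}$, the inverse-Fourier kernel of the degree-$(-v-l)$ component is parabolically homogeneous of degree $l-(2n+2)$ on $G_xM\times\R_t$; restricting to $g=0$ and using that $t$ has weight $v$ gives a contribution $c_l(x)\,t^{(l-2n-2)/v}$, a smooth $(\End\VV)\otimes|\Lambda|(M)$-valued function of $x$. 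The odd-index components vanish on the diagonal: under the automorphism of $G_xM$ reversing the weight-one directions --- which fixes the weight-two direction and the $t$-covariable precisely because $v$ is even --- the component $Q_{-v-l}$ acquires the sign $(-1)^l$, so for odd $l$ it is odd in the contact covariables and the corresponding kernel vanishes at $g=0$. Hence only $l=2j$ survives, and with $a_j(P)(x)=c_{2j}(x)$ one obtains
\[
k_t(x,x)\sim\sum_{j=0}^\infty t^{\frac{2(j-n-1)}{v}}a_j(P)(x),
\]
valid in $C^\infty(M,(\End\VV)\otimes|\Lambda|(M))$ since every step is carried out smoothly in $x$, with the $a_j(P)$ universal expressions in finitely many jets of the Heisenberg symbol of $P$.

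The main obstacle is Step 2: passing from the pointwise Rockland condition to genuine invertibility of the principal Volterra--Heisenberg symbol. This is the only place requiring real harmonic analysis on nilpotent Lie groups --- existence, smoothing property and correct homogeneity of the model heat kernels $p^x_t$, uniformly in $x$ --- rather than formal symbol bookkeeping; granted that input, Steps 3 and 4 are the routine parametrix-and-rescaling argument familiar from the elliptic heat trace, with the homogeneous dimension $2n+2$ of the contact osculating structure playing the role of the manifold dimension.
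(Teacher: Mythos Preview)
The paper does not give its own proof of this proposition: it is quoted as a result of Ponge~\cite[Ch.~5]{PongeAMS}, with only the surrounding terminology (Heisenberg order, Rockland condition, Volterra--Heisenberg calculus) briefly explained afterwards. Your sketch is precisely the strategy of that reference --- build the Volterra--Heisenberg calculus, invoke Rockland/Helffer--Nourrigat to invert the model heat symbol, run the parametrix recursion, and read off the diagonal asymptotics by homogeneity --- so there is nothing to compare against in the present paper, and your outline is faithful to the cited source.

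One small point worth tightening in Step~4: the phrase ``fixes the weight-two direction and the $t$-covariable precisely because $v$ is even'' is misleading. The reflection $\iota$ on $G_xM$ that negates the contact directions automatically fixes the centre $T$ (since $[{-X_i},{-X_j}]=[X_i,X_j]$) and has nothing to do with $t$ or $v$. Evenness of $v$ enters only to ensure that the \emph{principal} part $p_v$ (and hence $q_{-v}$) is $\iota$-even, after which the recursion forces $q_{-v-l}$ to have $\iota$-parity $(-1)^l$ because in privileged coordinates every term of Heisenberg degree $d$ has parity $(-1)^d$. With that clarification the parity argument eliminating odd $l$ is correct.
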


Some explanation about the proposition is in order. The
\emph{Heisenberg order} of $P$ is defined by assuming that a
derivative in the direction of the Reeb field $T$ has weight 2, while
derivatives in the direction of the contact distribution $H$ have
weight 1. The \emph{Rockland condition} is a representation-theoretic
condition defined in~\cite[Definition 3.3.8]{PongeAMS}. (The original
formulation is due to Rockland~\cite{Rockland}.)
%%% and Christ--Geller--Glowacki--Polin~\cite{CGGP}.) 
%%% and by many people before them... as Helffer-Nourrigat. As we
%%% don't need these extensions on general Carnot groups here, I
%%% removed this reference. 
An operator that satisfies this condition is hypoelliptic, in the
sense of \cite[Proposition 3.3.2]{PongeAMS}. Invertibility of an
operator in the \emph{Volterra--Heisenberg calculus} is explained
in~\cite[Ch.~5]{PongeAMS}.

The next result describes the properties of the zeta function in this
contact setting.  For the non-negative operators $P$ we are concerned
with, the result follows from Proposition \ref{prop:hypoheat} by a
classical argument using the Mellin transform of the heat kernel.

\begin{prop}[{Ponge~\cite[\S4]{PongeJFA}}]
\label{prop:hypozeta}
Let $P$ be as in Proposition~\ref{prop:hypoheat}. Then the zeta
function
$$
\zeta(P)(s) = \dim\ker P + \tr^* (P^{-s})\ , s\in\C,
$$
is a well-defined holomorphic function for $\Re(s)\gg 1$ and admits
a meromorphic extension to $\C$ with at worst simple poles occurring
at $s \in \mathcal{S}= \{\frac{2(n+1-j)}{v} \mid j \in \N\} \setminus
(-\N)$.  Moreover
$$
\zeta(P)(0) = \int_M \tra(a_{n+1}(P))\, \theta\wedge (d\theta)^n\,
$$
is the constant term in the development of $\tr (e^{-tP})$ as $t
\searrow 0$.
\end{prop}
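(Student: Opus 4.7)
The plan is to use the Mellin transform to convert the heat kernel asymptotics from Proposition~\ref{prop:hypoheat} into a meromorphic continuation of $\zeta(P)(s)$. Let $\Pi_0$ denote orthogonal projection onto $\ker P$. Since $P$ is non-negative and self-adjoint with discrete spectrum (using the hypoellipticity and compactness of $M$), the spectral decomposition yields
\begin{equation*}
\Gamma(s)\, \tr^*(P^{-s}) = \int_0^\infty t^{s-1}\bigl( \tr(e^{-tP}) - \dim \ker P\bigr)\, dt
\end{equation*}
for $\Re(s) \gg 1$. I would split the integral at $t=1$: the piece $\int_1^\infty$ is entire in $s$ because $\tr(e^{-tP}) - \dim\ker P$ decays exponentially at infinity (the nonzero spectrum being bounded below away from $0$), so all the analytic structure comes from the $t\to 0^+$ behaviour.

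For the $\int_0^1$ piece, I would feed in the asymptotic from Proposition~\ref{prop:hypoheat}, writing $A_j(P)=\int_M \tra(a_j(P))\,\theta\wedge(d\theta)^n$ and truncating
\begin{equation*}
\tr(e^{-tP}) = \sum_{j=0}^N t^{2(j-n-1)/v}A_j(P) + O\bigl(t^{2(N-n)/v}\bigr).
\end{equation*}
Term-by-term integration then yields
\begin{equation*}
\int_0^1 t^{s-1+2(j-n-1)/v}\, dt = \frac{1}{s+2(j-n-1)/v},
\end{equation*}
which produces simple poles at $s = 2(n+1-j)/v$ with residues $A_j(P)$, plus the pole $-\dim\ker P/s$ at $s=0$ from the kernel-subtraction term, plus a remainder holomorphic on $\Re(s) > -2(N-n)/v$. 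Letting $N\to\infty$ gives the full meromorphic continuation of $\Gamma(s)\,\tr^*(P^{-s})$ with simple poles at the claimed set $\{2(n+1-j)/v : j\in\N\}$. Dividing by $\Gamma(s)$, whose simple poles at non-positive integers $s\in -\N$ cancel the corresponding poles of the numerator, leaves $\zeta(P)(s)=\dim\ker P + \tr^*(P^{-s})$ meromorphic on $\C$ with at worst simple poles on the set $\mathcal S\setminus(-\N)$, as asserted.

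To evaluate $\zeta(P)(0)$, I would expand $\Gamma(s) = 1/s - \gamma + O(s)$ and identify the residue of $\Gamma(s)\tr^*(P^{-s})$ at $s=0$. From the computation above, exactly two contributions sit at $s=0$: the $j=n+1$ term, giving residue $A_{n+1}(P)$, and the kernel-subtraction term, giving residue $-\dim\ker P$. Hence the residue is $A_{n+1}(P)-\dim\ker P$, so that
\begin{equation*}
\tr^*(P^{-s})\bigl|_{s=0} = A_{n+1}(P) - \dim\ker P,
\end{equation*}
and adding $\dim\ker P$ yields $\zeta(P)(0) = \int_M \tra(a_{n+1}(P))\,\theta\wedge(d\theta)^n$, the constant term of the small-time heat expansion.

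The only step that really needs care rather than mere bookkeeping is justifying that the Mellin formula and its splitting are valid in this hypoelliptic setting, since $P^{-s}$ a priori lives in the Heisenberg--Volterra calculus rather than the classical one. The key input from Proposition~\ref{prop:hypoheat} is that $P+\partial_t$ has an invertible Volterra--Heisenberg symbol, which by Ponge's construction produces a heat semigroup whose trace is a trace-class operator with the stated diagonal asymptotics; once this is in hand, the Mellin manipulation and the accounting of poles proceed as in the elliptic case. The rest is arithmetic of exponents.
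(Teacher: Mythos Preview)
Your argument is correct and is precisely the ``classical argument using the Mellin transform of the heat kernel'' that the paper invokes in lieu of a proof; the paper gives no further details beyond citing Ponge and Proposition~\ref{prop:hypoheat}. Your bookkeeping of the poles and the computation of $\zeta(P)(0)$ via the residue of $\Gamma(s)\tr^*(P^{-s})$ at $s=0$ are both accurate.
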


Now by~\cite[p.~300]{Rumin94} and the definition (\cite[Definition
3.3.8]{PongeAMS}) of the Rockland condition, or else by Julg and
Kasparov's work \cite[\S5]{JK}, the fourth-order Laplacian $\Delta$ on
the contact complex (twisted with a flat bundle) satisfies the
Rockland condition, hence Propositions~\ref{prop:hypoheat}
and~\ref{prop:hypozeta} apply to it.

\begin{rem}
  Our convention is to include the eigenvalue $0$ in the definition of
  the zeta function of a non-negative hypoelliptic operator $P$. This
  means $ \zeta(P)(0)$ consists of (the integral of) purely local
  terms for $P$.
\end{rem}

\subsection{Variational behaviour of the analytic torsion}
\label{sec:vari-behav-analyt}

We consider the variation of contact analytic torsion and Ray--Singer
metric in the direction of an arbitrary line of pairs
$(\theta_\varepsilon, J_\varepsilon)$ of contact form and calibrated
almost complex structure for $(M,H)$.  We shall see that the variation
of the Ray--Singer metric is given entirely by local terms.  Indeed
this may be viewed as a necessary and sufficient condition for
correctly defining an analytic torsion; see the approach of
Branson~\cite{Branson}.

First we define from \eqref{eq:16} the \emph{contact torsion function}
by
\begin{equation}
  \label{eq:20}
  \kappa(s) =
  \frac{1}{2}\sum_{k=0}^{2n+1}(-1)^{k+1}w(k)\zeta(\Delta_k)(s),
\end{equation}
with $w(k)$ as in \eqref{eq:12}. Then the analytic torsion of the
contact complex reads
\begin{equation*}
  T_C = \exp\Bigl( \frac{1}{2}\kappa'(0)\Bigr).
\end{equation*}

For simplicity, in this section we suppress the $C$ from the notation,
as well as the representation $\rho$, although all results stand for
the twisted torsions and metrics as well.

\begin{thm}
  \label{thm:contact-tor-var}
  Let a $\bullet$ superscript denote first variation
  $(d/d\varepsilon)|_{\varepsilon=0}$.
  \begin{enumerate}
  \item One has $\kappa(0)^\bullet = 0$, so that $\kappa(0)$ is a
    contact invariant.
  \item The variation of the analytic torsion $T$ is given by
    \begin{equation*}
      (\ln T)^\bullet = \sum_{k=0}^n (-1)^k\left(\int_M\tra(\alpha \,
        a_{t^0;\,k})\,\dvol - \tr(\alpha\PP_k) \right), 
    \end{equation*}
    where $\alpha = \ast^{-1}\ast^{\bullet}$, $a_{t^0;\,k}$ is the
    $t^0$ coefficient in the diagonal small-time asymptotic expansion
    of the heat kernel of $\Delta_k$, $\dvol$ is the volume form
    $\theta\wedge (d\theta)^n$, and $\PP_k$ is orthogonal projection
    onto the null-space of $\Delta_k$.
  \end{enumerate}
\end{thm}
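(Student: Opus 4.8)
The plan is to adapt the classical Ray--Singer variational argument (as carried out in \cite{RS, BZ}) to the fourth-order hypoelliptic setting, working throughout with the spectral cut-off decomposition and using the commutation of $\Delta$ with $d_H$, $D$ and their adjoints. Write $\zeta(\Delta_k)(s) = \dim H^k + \sum_{\mu \in \spec^*(\Delta_k)} \mu^{-s}$ and note that $\Delta_k$ preserves the splitting into $\ker d_H \oplus (\ker d_H)^\bot$ (more precisely, on each eigenspace $E^k_\mu$ one has $E^k_\mu = F^k_\mu \oplus d_H F^{k-1}_\mu$ as in the proof of Proposition~\ref{prop:Reidemeister-contact}, where $F$ denotes the $(\ker d_H)^\bot$-part), and that $d_H$, $D$ restrict to isometries (up to the scalar $\mu^{1/4}$ or $\mu^{1/2}$) between the relevant eigenspaces. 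The first step is to compute the variation of the eigenvalues and eigenprojections under the one-parameter family $(\theta_\varepsilon, J_\varepsilon)$: since only the metric $g$ varies and the complex $(\EE^*, d_H, D)$ is fixed as a complex, the variation of all inner products is governed by $\alpha = \ast^{-1}\ast^\bullet$ acting on the Hodge star, exactly as in the de Rham case.

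For part (1), I would combine the weighted alternating sum defining $\kappa(s)$ with the cancellation coming from the complex structure. Using $e_k = N_k + N_{k-1}$ on each eigenspace and the identity $\sum_k (-1)^k w(k) e_k = 4 M_\mu$ from the proof of Proposition~\ref{prop:Reidemeister-contact}, together with the fact that the nonzero spectra of $\Delta$ in consecutive degrees coincide off the harmonic part, the sum $\sum_k (-1)^{k+1} w(k) \zeta(\Delta_k)(s)$ telescopes: the contribution of each eigenvalue $\mu \ne 0$ to $\kappa(s)$ is $\mu^{-s}$ times a $\mu$-independent integer combination, which one checks equals $2M_\mu$; but $M_\mu$ depends on the $e_k$ only through their alternating combination, which in turn is controlled. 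The key point is that $\kappa(0)$ then equals a local heat-coefficient expression via Proposition~\ref{prop:hypozeta} — namely $\kappa(0) = \frac{1}{2}\sum_k (-1)^{k+1} w(k) \int_M \tra(a_{n+1}(\Delta_k))\,\dvol$ — and its variation is the variation of an integral of local curvature data. Then one shows this variation vanishes: the variation of each $\zeta(\Delta_k)(0)$ is $-\tr(\alpha \PP_k)$ plus a local term coming from differentiating the heat trace under the integral sign, i.e. $\zeta(\Delta_k)(0)^\bullet = \int_M \tra(\alpha\, a_{t^0;k})\,\dvol - \tr(\alpha\PP_k)$, and plugging into the weighted alternating sum the terms cancel in pairs $k \leftrightarrow 2n+1-k$ using $\ast$-duality \eqref{eq:5} and the weight symmetry $w(k) + w(2n+1-k) = 2n+2$ (independent of $k$), giving $\kappa(0)^\bullet = 0$.

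For part (2), I would differentiate \eqref{eq:16}. The standard trick is to write $\zeta'(\Delta_k)(0)^\bullet$ via the Mellin transform: $\zeta(\Delta_k)(s) = \frac{1}{\Gamma(s)}\int_0^\infty t^{s-1}(\tr e^{-t\Delta_k} - \PP_k)\,dt$ and its $\varepsilon$-derivative involves $\tr(\alpha\, \dot\Delta_k\, t\, e^{-t\Delta_k})$-type terms; more efficiently, one uses the anomaly formula $(\zeta'(\Delta_k)(0))^\bullet = $ (constant term in small-$t$ asymptotics of $\tr(\alpha\, e^{-t\Delta_k})$)$\ - \tr(\alpha\PP_k)$, the constant term being $\int_M \tra(\alpha\, a_{t^0;k})\,\dvol$ by Proposition~\ref{prop:hypoheat}. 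Substituting into $\ln T = \frac{1}{4}\sum_k (-1)^{k+1} w(k)\zeta'(\Delta_k)(0)$ and then using $\ast$-duality to fold the sum over $k > n$ onto the sum over $k \le n$ (each pair contributing equally because $\ast$ intertwines $\Delta_k$ with $\Delta_{2n+1-k}$ and conjugates $\alpha$ appropriately, while $w(2n+1-k) = 2n+2 - w(k)$ and the $\frac{1}{4}$ becomes $\frac{1}{2}$ then $1$ after accounting for the two halves) yields the claimed formula $(\ln T)^\bullet = \sum_{k=0}^n (-1)^k\bigl(\int_M \tra(\alpha\, a_{t^0;k})\,\dvol - \tr(\alpha\PP_k)\bigr)$.

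The main obstacle I anticipate is twofold. First, the bookkeeping of the weight $w(k)$ through the $\ast$-duality folding: one must verify carefully that the middle-degree terms $k = n$ and $k = n+1$ behave correctly, since $\Delta_n$ and $\Delta_{n+1}$ are built differently (involving $D^\ast D$ and $DD^\ast$), and that the factor-of-two discrepancies (fourth-order vs. second-order, the $\frac{1}{4}$ vs. $\frac{1}{2}$) all reconcile — this is exactly where the definition \eqref{eq:10} with its squared $d_H\delta_H$ terms is designed to make things match. Second, and more seriously, justifying the differentiation-under-the-integral and the anomaly formula in the \emph{hypoelliptic} setting: one needs the small-$t$ heat asymptotics of Proposition~\ref{prop:hypoheat} to hold uniformly (in $C^\infty$) in the parameter $\varepsilon$, and that the perturbation $\dot\Delta_k$ (a differential operator of Heisenberg order $\le 4$ with coefficients built from $\alpha$) composed with $e^{-t\Delta_k}$ still has a trace with an asymptotic expansion whose constant term is the stated local integral. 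This is the genuinely new analytic input compared to \cite{RS}; it follows from the Volterra--Heisenberg calculus machinery of \cite{PongeAMS} cited in \S\ref{subsec:heisenberg}, but invoking it cleanly is the crux of the argument.
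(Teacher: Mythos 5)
Your outline shares several tools with the paper's proof (Duhamel's formula, Mellin transform, small-$t$ heat asymptotics, and the Hodge $*$ folding of $\kappa(s)$ onto degrees $k\leq n$), but it is missing the one algebraic computation on which everything else depends, and the degree-by-degree substitutes you propose for it do not work.

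The crux of the paper's argument is the identity culminating in the step after \eqref{eq:comb}: writing $\kappa(s) = \sum_{k=0}^n c_k\zeta(\Delta_k)(s)$ with $c_k = (-1)^k(n+1-k)$, one shows — by computing $\Delta_k^\bullet$ explicitly in terms of $\alpha$, using the semigroup property, the cyclicity of the trace, and the commutation of $\Delta$ with $d_H,\delta_H,D,D^*$ — that
\begin{equation*}
\sum_{k=0}^n c_k\,\tr(\Delta_k^\bullet e^{-t\Delta_k}) = -2\,\frac{d}{dt}\sum_{k=0}^n (-1)^{k+1}\tr(\alpha\, e^{-t\Delta_k})\,,
\end{equation*}
because the special combinations $c_k + c_{k+1} = (-1)^k$ and $c_k + c_{k-1} = (-1)^{k+1}$ collapse the messy expression for $\Delta_k^\bullet$ into $(-1)^{k+1}\Delta_k$ under the trace, and $\tr(\alpha\Delta_k e^{-t\Delta_k}) = -\frac{d}{dt}\tr(\alpha e^{-t\Delta_k})$. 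Only after this telescoping does one integrate by parts in $t$ and produce the explicit factor of $s$ in $f(s)^\bullet=(\Gamma(s)\kappa(s))^\bullet$ (equation \eqref{eq:fsdot}); that factor of $s$ is what makes $f(s)^\bullet$ regular at $s=0$. Both conclusions of the theorem are then read off simultaneously from this regularity: vanishing of the residue forces $\kappa(0)^\bullet = 0$, and $f(0)^\bullet = \kappa'(0)^\bullet$ gives the stated local formula. Your sketch never produces the time-derivative structure and hence never produces the factor of $s$ that the whole argument hinges on.

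The substitutes you offer are, each separately, incorrect. The formula $\zeta(\Delta_k)(0)^\bullet = \int_M\tra(\alpha a_{t^0;\,k})\,\dvol - \tr(\alpha\PP_k)$ does not hold for a single $\Delta_k$: by Duhamel, $\zeta(\Delta_k)(0)^\bullet$ is the coefficient of $t^{-1}$ in $-\tr(\Delta_k^\bullet e^{-t\Delta_k})$, which is a priori unrelated to the constant heat coefficient of $\tr(\alpha e^{-t\Delta_k})$; the two agree only after the telescoping identity above, i.e.\ inside the weighted sum, not degree by degree. For the same reason your ``anomaly formula'' for $\zeta'(\Delta_k)(0)^\bullet$ fails in isolation: $(\Gamma(s)\zeta(\Delta_k)(s))^\bullet$ can have a pole at $s=0$ (with residue $\zeta(\Delta_k)(0)^\bullet$), so its value at $s=0$ is not even defined. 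Finally, even granting your formula, the $k\leftrightarrow 2n+1-k$ cancellation you invoke for (1) does not occur: because $\alpha$ anticommutes with $*$ while $*$ intertwines $\Delta_k$ and $\Delta_{2n+1-k}$, the paired terms combine with coefficient $(-1)^{k+1}\bigl(w(k)+w(2n+1-k)\bigr) = (-1)^{k+1}(2n+2) \neq 0$, and moreover this would contradict the duality identity $\zeta(\Delta_k)(s)\equiv\zeta(\Delta_{2n+1-k})(s)$ unless $\zeta(\Delta_k)(0)^\bullet$ were already known to vanish, which is precisely what is to be shown. Note that Remark~\ref{rem:unicity} in the paper makes this structural point explicit: the coefficients $c_k=(-1)^k(n+1-k)$ are the unique (up to scale) ones for which the variation is local-plus-cohomological, so no degree-by-degree argument of the kind you sketch can succeed.
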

\begin{proof}
  By Hodge $\ast$ duality (for convenience we suppress the
  $\varepsilon$ dependence)
  \begin{equation*}
    \kappa(s) = \sum_{k=0}^nc_k\zeta(\Delta_k)(s)
  \end{equation*}
  with
  $$
  c_k = (-1)^k(n+1-k).
  $$
  By a Mellin transform set
  $$
  f(s) = \Gamma(s)\kappa(s) = \sum_{k=0}^{n} c_k\int_0^{+\infty}
  t^{s-1}\tr(e^{-t\Delta_k} - \PP_k)dt + \Gamma(s)\sum_{k=0}^{n}
  c_k\dim H^k(\EE,d_H),
  $$
  where $\PP_k$ denotes orthogonal projection onto the null-space
  of $\Delta_k$.
  \begin{lemma}
    \label{lemma:duhamel}
    It holds that
    $$
    (\tr(e^{-t\Delta}))^\bullet = -t\tr(\Delta^\bullet
    e^{-t\Delta}).
    $$
  \end{lemma}
  \begin{proof}
    Duhamel's formula (see e.g.~\cite[Proposition 3.15]{rosenberg})
    implies that
    \begin{equation*}
      \tr (e^{-t(\Delta + \varepsilon\Delta^\bullet + O(\varepsilon^2))}) -
      \tr (e^{-t\Delta}) = -\int_0^t \tr(e^{-(t-s)(\Delta +
        \varepsilon\Delta^\bullet +
        O(\varepsilon^2))}\varepsilon(\Delta^\bullet +
      O(\varepsilon))e^{-s\Delta}) \,ds. 
    \end{equation*}
    Dividing both sides by $\varepsilon$ then letting $\varepsilon\to
    0$ gives
    $$
    (\tr(e^{-t\Delta}))^\bullet = -\int_0^t
    \tr(e^{-(t-s)\Delta}\Delta^\bullet e^{-s\Delta}) \ ds.
    $$
    Recalling that $\tr(AB) = \tr(BA)$, for smoothing operators $A,
    B$, we have
    $$
    (\tr(e^{-t\Delta}))^\bullet = -\int_0^t \tr(\Delta^\bullet
    e^{-s\Delta}e^{-(t-s)\Delta}) \ ds,
    $$
    and the lemma follows.
  \end{proof}
  
  By Lemma~\ref{lemma:duhamel}
  \begin{equation}
    \label{eq:fsbullet}
    f(s)^\bullet = -\sum_{k=0}^{n} c_k\int_0^{+\infty}
    t^s\tr(\Delta_k^\bullet e^{-t\Delta_k}), 
  \end{equation}
  since by hypoellipticity $\tr(\PP_k) = \dim\ker\Delta_k = \dim
  H^k(\EE,d_H)$ is certainly independent of $\theta$ and $J$.
  
  Setting $\alpha = \ast^{-1}\ast^{\bullet}$, one computes using
  \eqref{eq:5} the variation of the Laplacian as
  $$
  \Delta^\bullet = \left\{
    \begin{array}{ll} -d_H\alpha\delta_Hd_H\delta_H +
      d_H \delta_H \alpha d_H\delta_H - d_H\delta_Hd_H\alpha\delta_H +
      d_H\delta_H d_H \delta_H\alpha\\
      \quad -\alpha \delta_H d_H \delta_H d_H + \delta_H\alpha
      d_H\delta_Hd_H
      - \delta_H d_H\alpha\delta_H d_H + \delta_H d_H\delta_H\alpha d_H\\
      \quad\quad \textnormal{ on $\EE^k$ for
      }k=0,\dots,n-1 \,,\vspace{2mm}\\
      -d_H\alpha\delta_Hd_H\delta_H + d_H\delta_H\alpha d_H\delta_H -
      d_H\delta_Hd_H\alpha\delta_H +
      d_H\delta_Hd_H\delta_H\alpha\\
      \quad -\alpha D^\ast D + D^\ast\alpha D\\
      \quad\quad \textnormal{ on }\EE^n \,.
      % \,,\vspace{2mm}\\
%       -D\alpha D^\ast + DD^\ast\alpha\\
%       \quad -\alpha\delta_Hd_H\delta_Hd_H + \delta_H\alpha
%       d_H\delta_Hd_H - \delta_H d_H\alpha\delta_H d_H + \delta_H
%       d_H\delta_H\alpha d_H
%       \\
%       \quad\quad \textnormal{ on }\EE^{n+1} \,,\vspace{2mm}\\
%       -d_H\alpha\delta_Hd_H\delta_H + d_H\delta_H\alpha d_H\delta_H -
%       d_H\delta_Hd_H\alpha\delta_H + d_H\delta_Hd_H\delta_H\alpha\\
%       \quad+ -\alpha\delta_Hd_H\delta_Hd_H + \delta_H\alpha
%       d_H\delta_Hd_H - \delta_H d_H\alpha\delta_H d_H + \delta_H
%       d_H\delta_H\alpha d_H
%       \\
%       \quad\quad \textnormal{ on $\EE^k$ for }k=n+2,\dots,2n+1.
    \end{array} 
  \right.
  $$
  A computation then shows that
  \begin{equation}
    \label{eq:comb}
    \begin{split}
      \sum_{k=0}^{n}c_k\tr(\Delta_k^\bullet e^{-t\Delta_k}) &=
      2\sum_{k=0}^{n-1} \tr\left(\left(\alpha(c_k +
          c_{k-1})(d\delta)^2 - \alpha(c_k + c_{k+1})(\delta
          d)^2\right) e^{-t\Delta_k}
      \right)\\
      &\quad + 2\tr\left(\left(\alpha(c_n + c_{n-1})(d\delta)^2 -
          \alpha c_n D^*D\right)e^{-t\Delta_n}\right).
    \end{split}
  \end{equation}
  To move the $\alpha$'s to the front we have used the following
  facts: the heat kernel is a semigroup, implying $e^{-t\Delta} =
  e^{-(t/2)\Delta}e^{-(t/2)\Delta}$; if operators $A, B$ are smoothing
  then $\tr(AB) = \tr(BA)$; and the Laplacian $\Delta$ commutes with
  $d_H, D$ and their adjoints. We have also used that
  $$
  \tr(\alpha DD^\ast e^{-t\Delta_{n+1}}) = -\tr(\alpha D^\ast D
  e^{-t\Delta_{n}}),
  $$
  as $\alpha DD^\ast e^{-t\Delta_{n+1}} = - *^{-1}(\alpha D^\ast D
  e^{-t\Delta_{n}}) *$, which follows from (\ref{eq:5}) and
  $(*^2)^\bullet = * \alpha + \alpha * =0$.
  
  Simplifying \eqref{eq:comb} yields
  \begin{equation*}
    \begin{split}
      \sum_{k=0}^{n}c_k\tr(\Delta_k^\bullet e^{-t\Delta_k})
      &= 2\sum_{k=0}^n(-1)^{k+1}\tr(\alpha\Delta_ke^{-t\Delta_k})\\
      &= -2\frac{d}{dt}\sum_{k=0}^n (-1)^{k+1}\tr (\alpha
      e^{-t\Delta_k}).
    \end{split}
  \end{equation*}
  Hence after integrating by parts in \eqref{eq:fsbullet},
  \begin{equation}
    \label{eq:fsdot}
    f(s)^\bullet = 2s\sum_{k=0}^n (-1)^k\int_0^{+\infty} t^{s-1}\tr(\alpha
    e^{-t\Delta_k} - \alpha\PP_k)dt\,. 
  \end{equation}
  We have included the $\alpha\PP_k$ term in the integrand to ensure
  that $f(s)^\bullet$ is a well-defined meromorphic function on all
  $\C$.
  
  As $1/\Gamma(s) = O(s)$, $f(s)^\bullet$ is regular at the origin.
  Now near $s=0$
  $$
  f(s) = \Gamma(s)(\kappa(0) + s\kappa^\prime(0) + O(s^2))
  $$
  so
  $$
  f(s)^\bullet = \Gamma(s)\kappa(0)^\bullet +
  s\Gamma(s)(\kappa^\prime(0))^\bullet + O(s).
  $$
  Since $f(s)^\bullet$ is regular at the origin, letting $s\to 0$
  in the above equation gives
  \begin{equation*}
    % \label{eq:kappa}
    \kappa(0)^\bullet = 0 \quad \textnormal{ and }\quad
    (\kappa^\prime(0))^\bullet = 
    f(0)^\bullet. 
  \end{equation*}
  This gives assertion (1) of Theorem~\ref{thm:contact-tor-var}.
  Assertion (2) follows from Proposition~\ref{prop:hypozeta},
  % respectively Proposition~\ref{prop:heat-elliptic}, 
  after taking the inverse Mellin transform of \eqref{eq:fsdot}.
\end{proof}

\begin{rem}
  \label{rem:unicity}
  The previous proof actually shows that the torsion function
  $$
  \kappa(s)= \sum_{k=0}^n (-1)^k (n+1-k) \zeta(\Delta_k)(s)
  $$
  studied here is, up to a multiplicative factor, the \emph{unique}
  combination of such zeta functions that leads to a variational
  formula like \eqref{eq:fsdot}, i.e.~local up to cohomological terms.
\end{rem}

\medskip

The variational formula for analytic torsion we obtained is more
neatly expressed at the level of the Ray--Singer metric, since then
the global term disappears. The next result is analogous to the
variational formula for the Ray--Singer metric on Riemannian manifolds
(see~\cite[Theorem 4.14]{BZ} and~\cite[Theorem 1.18]{BGSIII}).

\begin{cor}
  \label{cor:contact-quillen-var}
  Let $\|\quad \|_{C}$ denote the contact Ray--Singer metric on $\det
  H^*(\mathcal{E}, d_H)$.
  \begin{enumerate}
  \item The following identity holds:
    \begin{equation}
      \label{eq:contact-quillen-var} 
      (\ln\|\quad \|_{C})^\bullet = \sum_{k=0}^n
      (-1)^k \int_M \tra(\alpha a_{t^0;\,k})\, \theta \wedge (d
      \theta)^n \,.  
    \end{equation}
  \item Under conformal variations of the contact form
    $(\theta_\varepsilon = e^{2\varepsilon \Upsilon }\theta,
    J_\varepsilon = J)$, for a function $\Upsilon$, we have
    \begin{equation*}
      (\ln\|\quad \|_{C})^\bullet = 2\sum_{k=0}^n
      (-1)^k(n+1-k)\int_M \Upsilon \tra(a_{t^0;\,k}) \, \theta\wedge
      (d\theta)^n \,.
    \end{equation*}
  \end{enumerate}
\end{cor}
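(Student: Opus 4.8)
The plan is to split $\ln\|\quad\|_{C}=\ln T_{C}+\ln|\quad|_{L^{2}(\EE)}$ and treat the two terms separately. The variation of $\ln T_{C}$ is furnished by Theorem~\ref{thm:contact-tor-var}(2), so assertion (1) will follow once I show that the variation of the $L^{2}$ metric on $\det H^{*}(\EE,d_{H})$ equals exactly the global term $\sum_{k=0}^{n}(-1)^{k}\tr(\alpha\PP_{k})$ occurring there, so that the two contributions cancel. This is the contact counterpart of the computation behind the Riemannian anomaly formulae of~\cite[Theorem 4.14]{BZ} and~\cite[Theorem 1.18]{BGSIII}.

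To compute the $L^{2}$-metric variation I would fix a basis of each $H^{k}(\EE,d_{H})$ and track the corresponding harmonic representatives $h_{i}(\varepsilon)\in\mathcal{H}^{k}(\EE,d_{H})$ for the metric $g_{\varepsilon}=d\theta_{\varepsilon}(\cdot,J_{\varepsilon}\cdot)+\theta_{\varepsilon}^{2}$; choosing the basis orthonormal at $\varepsilon=0$, the first variation of the induced metric on the line $\det H^{k}$ is $\frac12\sum_{i}\langle h_{i},h_{i}\rangle_{L^{2}}^{\bullet}$. The key observation is that the bundle $\EE^{k}$ depends only on the contact structure $H$, while $h_{i}(\varepsilon)$ and $h_{i}(0)$ represent the same $d_{H}$-cohomology class; hence $h_{i}^{\bullet}$ lies in the image of $d_{H}$ (respectively of $D$ into degree $n+1$), and since harmonic forms are co-closed, $h_{i}^{\bullet}$ is $L^{2}$-orthogonal to $\mathcal{H}^{k}$. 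Therefore $\langle h_{i},h_{i}\rangle_{L^{2}}^{\bullet}$ records only the variation of the $L^{2}$ pairing of the fixed forms, which equals $\langle h_{i},\alpha h_{i}\rangle_{L^{2}}$ with $\alpha=\ast^{-1}\ast^{\bullet}$; being the derivative of a real quantity it is real, so $\frac12\sum_{i}\langle h_{i},h_{i}\rangle_{L^{2}}^{\bullet}=\frac12\tr(\alpha\PP_{k})$. Summing over $k$ with the Knudsen--Mumford signs and folding the degrees $k>n$ onto $k\le n$ by Hodge $\ast$-duality together with the relation $(\ast^{2})^{\bullet}=\ast\alpha+\alpha\ast=0$ from the proof of Theorem~\ref{thm:contact-tor-var}, one obtains $(\ln|\quad|_{L^{2}(\EE)})^{\bullet}=\sum_{k=0}^{n}(-1)^{k}\tr(\alpha\PP_{k})$, which, combined with Theorem~\ref{thm:contact-tor-var}(2), gives
\begin{equation*}
  (\ln\|\quad\|_{C})^{\bullet}=\sum_{k=0}^{n}(-1)^{k}\int_{M}\tra(\alpha\,a_{t^{0};\,k})\,\theta\wedge(d\theta)^{n},
\end{equation*}
which is (1).

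For (2) it then remains only to evaluate $\alpha=\ast^{-1}\ast^{\bullet}$ on $\EE^{k}$, $k\le n$, along the conformal family $\theta_{\varepsilon}=e^{2\varepsilon\Upsilon}\theta$, $J_{\varepsilon}=J$. Along this family the Levi metric on $H$ rescales by $e^{2\varepsilon\Upsilon}$, while the volume form $\theta\wedge(d\theta)^{n}$ rescales by $e^{2(n+1)\varepsilon\Upsilon}$; since a section of $\EE^{k}$ with $k\le n$ is horizontal of Heisenberg weight $k$, differentiating the defining relation $\phi\wedge\ast\psi=\langle\phi,\psi\rangle\,\theta\wedge(d\theta)^{n}$ gives $\ast^{\bullet}=2(n+1-k)\Upsilon\,\ast$ on $\EE^{k}$, i.e.\ $\alpha=2(n+1-k)\Upsilon\cdot\id$ there. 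Inserting this into (1) yields
\begin{equation*}
  (\ln\|\quad\|_{C})^{\bullet}=2\sum_{k=0}^{n}(-1)^{k}(n+1-k)\int_{M}\Upsilon\,\tra(a_{t^{0};\,k})\,\theta\wedge(d\theta)^{n}.
\end{equation*}

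The main obstacle is the first step --- checking that the variation of the $L^{2}$ metric reproduces precisely the global term of Theorem~\ref{thm:contact-tor-var}(2). The delicate points there are that $h_{i}^{\bullet}$ is genuinely exact, which rests on the Hodge decomposition for the hypoelliptic contact complex so that the images of $d_{H}$ and $D$ are closed, and the degree-by-degree sign bookkeeping under Hodge duality, which must match the combination already folded in Theorem~\ref{thm:contact-tor-var}. Both are routine transcriptions of the elliptic arguments of~\cite{BZ,BGSIII}; the only contact-specific feature is the extra weight coming from the second-order operator $D$ in middle degree, which is already absorbed into the weights $w(k)$.
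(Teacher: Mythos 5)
Your proof is correct and follows essentially the same route as the paper: split $\ln\|\quad\|_{C}$ into the analytic-torsion and $L^{2}$-metric contributions, show the variation of the latter reproduces the global term $\sum_{k\le n}(-1)^{k}\tr(\alpha\PP_{k})$ of Theorem~\ref{thm:contact-tor-var}(2), and for (2) compute $\alpha=2(n+1-k)\Upsilon\,\id$ by conformal rescaling. The one cosmetic difference is in how the orthogonality of the variation of harmonic representatives is justified: you argue via exactness of $h_{i}(\varepsilon)-h_{i}(0)$ together with Hodge theory for the hypoelliptic complex, whereas the paper simply observes that $\PP^{\bullet}$ maps $\mathcal{H}^{*}$ into its orthogonal complement (which follows algebraically from differentiating $\PP^{2}=\PP$ for the family of $g_{\varepsilon}$-orthogonal projections); both are standard and equivalent.
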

\begin{proof}
  Recalling from \eqref{eq:18} and \S\ref{sec:detbundle} the
  definition of the Ray--Singer metric, we have that
  \begin{equation}
    \label{eq:metric-dot}
    \ln\|\quad \|^2_{C} = 2\ln T_C + \ln|\quad
    |^2_{L^2(\mathcal{E})}\,,
  \end{equation}
  where the $L^2$ metric $|\quad |_{L^2(\mathcal{E})}$ is induced on
  $\det H^*(\EE,d_H)$ from the inner product on $H^*(\EE,d_H)$ defined
  by
  $$
  \langle [u],[v]\rangle_{\theta,\,J} = \int_M \PP u \wedge\ast\PP
  v.
  $$
  But for the orthogonal projection $\PP$ onto harmonic forms
  $\mathcal{H}^*(\EE,d_H)$, one checks that $\PP^\bullet$ takes
  $\mathcal{H}^*(\EE,d_H)$ to its orthogonal complement. Thus
  \begin{equation}
    \label{eq:inner-prod-dot}
    \langle [u],[v]\rangle^\bullet = \int_M \PP u \wedge(\ast^\bullet)\PP
    v = \langle [u],\alpha[v]\rangle. 
  \end{equation}
  
  If we use Hodge $\ast$ duality in the definition of $\det
  H^*(\EE,d_H)$, then take an orthonormal basis of each
  $\mathcal{H}^k(\EE,d_H)$, $k=0,\dots,n$, and finally use
  \eqref{eq:inner-prod-dot}, it is easy to see that
  $$
  (\ln|\quad |_{C})^\bullet = \sum_{k=0}^{n}
  (-1)^k\tr(\alpha\PP_k).
  $$
  This together with \eqref{eq:metric-dot} and
  Theorem~\ref{thm:contact-tor-var} completes the proof of (1).
  
  \smallskip Assertion (2) follows immediately from (1), since for
  conformal variations it is straightforward to check that on $\EE^k$
  $$
  \alpha = \ast^{-1}\ast^\bullet = 2(n+1-k)\Upsilon\id \,.
  $$
\end{proof}

Note that setting $\Upsilon \equiv 1$ in
Corollary~\ref{cor:contact-quillen-var} (2), i.e.~performing a
constant rescaling $\theta \mapsto e^{2\varepsilon} \theta$, yields
\begin{equation}
  \label{eq:21}
  \bigl( \|\quad \|_C \bigr)^\bullet = 2 \kappa(0)\,. 
\end{equation}

In particular, if the contact invariant $\kappa(0)\neq 0$, then we
could not hope for any invariance of Ray--Singer metric. Note that by
definition
\begin{align*}
  \kappa(0 ) & = \sum_{k=0}^n (-1)^k (n+1-k) \zeta(\Delta_k)(0) \\
  & = \sum_{k=0}^n (-1)^k (n+1-k) \int_M \tra(a_{t^0;\,k})\, \theta
  \wedge (d \theta)^n\,,
\end{align*}
by Proposition~\ref{prop:hypozeta}, where again $a_{t^0;\,k}$ is the
constant $t^0$ coefficient in the diagonal small-time asymptotic
expansion of the heat kernel of $\Delta_k$.  Therefore $\kappa(0)$ is
an integral over $M$ of local curvature data, namely
\begin{equation}
  \label{eq:22}
  \kappa(0) = \int_M P_n(R, A, T, \nabla) \dvol \,,
\end{equation}
for some universal polynomial in Tanaka--Tanno--Webster (\cite{Tanaka,
  Tanno, Webster}) curvature $R$, torsion $A$, Tanno's tensor $T=
\nabla J$, and their covariant derivatives.

We show in Corollary~\ref{cor:3var} below that, in dimension $3$
($n=1$), $\kappa(0)$ vanishes identically. Whether contact invariants
such as $\kappa(0)$ vanish in all dimensions is an open problem. For
further discussion in the contact case see~\cite[\S7]{Seshadri-ACH},
and \cite[Remark 9.3]{BHR} for a similar problem arising for the eta
function of the contact complex.

\subsection{CR/contact invariants in dimension $3$} 
\label{sec:cont-case-dimens}

In dimension $3$, besides the vanishing of $\kappa(0)$ we mentioned,
we can also obtain more explicit variational formulae, and get
CR/contact-invariant corrections to the contact Ray--Singer metric.
\begin{cor}
\label{cor:3var}
On $3$-dimensional contact manifolds:
\begin{enumerate}
\item It holds that $\kappa(0) = 0$, and thus $\|\quad \|_C$ is
  independent of a constant rescaling $\theta\mapsto K \theta$.
\item There exist universal constants $C_1$, $C_2$ such that under a
  conformal variation $(\theta^\varepsilon = e^{2\varepsilon \Upsilon
  }\theta, J_\varepsilon = J)$ we have
  $$
  (\ln\|\quad \|_{C})^\bullet = \int_M \Upsilon(C_1 \Delta_H R +
  C_2\Im A_{11,}^{\phantom{11,}11})\,\theta\wedge d\theta \,,
  $$
  where $R$, $A$, $\Delta_H$ and a comma subscript denote
  respectively the scalar curvature, torsion, sub-Laplacian and
  covariant differentiation with respect to the Tanaka--Webster
  connection (\cite{Tanaka, Webster}) of $(\theta, J)$.
\item Let $C'_1= -C_1/8$ and $C'_2= C_2/4$, with $C_1, C_2$ as above.
  Then
  $$
  \|\quad \|_{\mathrm{CR}} = \exp\left( C_1' \int_M R^2 \,
    \theta\wedge d\theta + C_2' \int_M |A|^2 \theta\wedge
    d\theta\right) \|\quad \|_{C}
  $$
  is a CR-invariant (i.e.~independent of contact form) metric on
  $\det H^*(\EE,d_H)$.
\item There exist universal constants $C_3$, $C_4$ such that both
  $$
  \|\quad \|_{H}^\nu = \exp(C_3 \nu(M)) \|\quad \|_{\mathrm{CR}}
  \quad \mathrm{and} \quad \|\quad \|_{H}^{D} = \exp(C_4
  \overline{\eta}(D*)) \|\quad \|_{\mathrm{CR}}
  $$
  are contact-invariant metrics, with $\nu(M)$ the $\nu$-invariant
  of Biquard--Herzlich~\cite{BH}, and $\overline{\eta}(D*)$ the
  CR--invariant correction to the pseudohermitian eta invariant
  $\eta(D*)$; see \cite[Theorem 9.4]{BHR}.
%%%%%%%%%%%%%%%%%%%%%%%%%%%%%%%%%%%%%%%%% 
% Moreover $C_3$ is related to $C_1'$ by
% $$
% C_1' - \frac{1}{512} - \frac{C_3}{16\pi^2} = 0.
% $$
%%%%%%%%%%%%%%   quite unclear until theorem 5.2
\end{enumerate}
\end{cor}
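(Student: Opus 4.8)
The plan is to run the variational formulae of Theorem~\ref{thm:contact-tor-var} and Corollary~\ref{cor:contact-quillen-var} through the classification of weight-$4$ local pseudohermitian invariants in dimension $3$, and then to match the result against the variational identities for $\nu(M)$ and $\overline\eta(D*)$ already available in~\cite{BH,BHR}. Concretely, for parts (2) and (1): put $P = 2\,\tra(a_{t^0;\,0}) - \tra(a_{t^0;\,1})$, so that Corollary~\ref{cor:contact-quillen-var}(2) reads
\[
  (\ln\|\quad\|_C)^\bullet = 2\int_M \Upsilon\,P\,\theta\wedge d\theta
\]
under a conformal variation $\theta_\varepsilon = e^{2\varepsilon\Upsilon}\theta$. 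By Propositions~\ref{prop:hypoheat} and~\ref{prop:hypozeta}, $P$ is a universal local invariant of Heisenberg weight $4$, the weight of the density $\theta\wedge d\theta$. In dimension $3$ every calibrated $J$ is integrable, so we may work with the Tanaka--Webster connection, for which $\nabla J = 0$ (no Tanno tensor); hence $P$ is a polynomial in $R$, $A_{11}$ and their covariant derivatives, and the classification of weight-$4$ scalars together with the contracted CR Bianchi identities gives
\[
  P = a\,R^2 + b\,|A|^2 + \tfrac12\,C_1\,\Delta_H R + \tfrac12\,C_2\,\Im A_{11,}^{\phantom{11,}11}
\]
for universal constants $a,b,C_1,C_2$. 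One then argues $a=b=0$: by Theorem~\ref{thm:contact-tor-var}(1) the quantity $\kappa(0)=\int_M P\,\theta\wedge d\theta = a\int_M R^2\,\theta\wedge d\theta + b\int_M|A|^2\,\theta\wedge d\theta$ is a contact invariant, whereas $\int_M R^2\,\theta\wedge d\theta$ and $\int_M|A|^2\,\theta\wedge d\theta$ are not and have linearly independent conformal variations (computed in the next step), forcing $a=b=0$. This proves~(2); and since both $\Delta_H R$ and $\Im A_{11,}^{\phantom{11,}11}$ integrate to zero over $M$ (integration by parts, using a Bianchi identity for the second), it also gives $\kappa(0)=\int_M P\,\theta\wedge d\theta = 0$, consistently with~\eqref{eq:21}, which then yields the second clause of~(1), namely that $\|\quad\|_C$ is unchanged by a constant rescaling $\theta\mapsto K\theta$.

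For part~(3) I would compute the two conformal variations explicitly. Using the dimension-$3$ transformation laws of $R$ and $A_{11}$ and integrating by parts twice, one obtains $\bigl(\int_M R^2\,\theta\wedge d\theta\bigr)^\bullet = 8\int_M\Upsilon\,\Delta_H R\,\theta\wedge d\theta$ and $\bigl(\int_M|A|^2\,\theta\wedge d\theta\bigr)^\bullet = -4\int_M\Upsilon\,\Im A_{11,}^{\phantom{11,}11}\,\theta\wedge d\theta$; these are linearly independent in $\Upsilon$, and the displayed constants are exactly what makes $C_1'=-C_1/8$, $C_2'=C_2/4$ work. With those normalisations, $\|\quad\|_{\mathrm{CR}} = \exp\bigl(C_1'\int_M R^2\,\theta\wedge d\theta + C_2'\int_M|A|^2\,\theta\wedge d\theta\bigr)\|\quad\|_C$ has vanishing conformal variation by~(2), hence is a metric on $\det H^*(\EE,d_H)$ independent of the contact form; and it is unchanged by a constant rescaling $\theta\mapsto K\theta$, since each of its three factors is.

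For part~(4) I would bring in the variational behaviour of the remaining invariants. The Biquard--Herzlich invariant $\nu(M)$ of~\cite{BH} and the CR-invariant correction $\overline\eta(D*)$ of~\cite[Theorem 9.4]{BHR} are independent of the contact form, so adjoining either of them to $\|\quad\|_{\mathrm{CR}}$ preserves that independence; but each changes when the calibrated complex structure $J$ is deformed, and by the formulae of~\cite{BH,BHR} its $J$-variation is a universal linear combination of the $J$-variations of $\int_M R^2\,\theta\wedge d\theta$ and $\int_M|A|^2\,\theta\wedge d\theta$. Since $\|\quad\|_{\mathrm{CR}}$ depends on $(H,J)$ only through $J$, one computes $(\ln\|\quad\|_{\mathrm{CR}})^\bullet$ along a deformation of $J$ and chooses the universal constant $C_3$ (resp.\ $C_4$) so that it is cancelled by $C_3\,\nu(M)^\bullet$ (resp.\ $C_4\,\overline\eta(D*)^\bullet$); the resulting metric then depends only on the contact structure $H$.

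The main obstacle is the identification of $P$ in part~(2): because the $\Delta_k$ are fourth-order and only hypoelliptic, their heat coefficients must be extracted via the noncommutative Heisenberg symbolic calculus, and pinning $P$ down to the span of $R^2$, $|A|^2$, $\Delta_H R$ and $\Im A_{11,}^{\phantom{11,}11}$ — and then eliminating the first two — rests on the dimension-$3$ invariant theory together with the contact-invariance of $\kappa(0)$ established in Theorem~\ref{thm:contact-tor-var}. Determining the actual values of $C_1,C_2$, and hence of $C_1',C_2',C_3,C_4$, is not attempted here.
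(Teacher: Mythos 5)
Your overall strategy for parts (1)--(4) matches the paper: use Corollary~\ref{cor:contact-quillen-var}, the weight-$4$ invariant theory in dimension~$3$, and the variation formulae of $\int_M R^2\,\theta\wedge d\theta$ and $\int_M|A|^2\,\theta\wedge d\theta$ from~\cite{BHR}, then cancel the $J$-variation against $\nu(M)$ or $\overline\eta(D*)$. However, there is a genuine gap in your treatment of part~(2). Your classification of weight-$4$ pseudohermitian scalars lists only $R^2$, $|A|^2$, $\Delta_H R$ and $\Im A_{11,}^{\phantom{11,}11}$, but a fifth linearly independent generator exists, namely $R_{,0} = 2\Re A_{11,}^{\phantom{11,}11}$ (the Reeb derivative of the scalar curvature). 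The contracted Bianchi identity relates $R_{,0}$ and $\Re A_{11,}^{\phantom{11,}11}$ to each other, not to any term on your list, so it does not remove this generator. The paper keeps all five terms and must therefore separately eliminate the coefficient of $R_{,0}$: it observes that the conformal variation of $\ln\|\quad\|_C$ is the differential of a function on the space $\Theta$ of contact forms, hence a \emph{closed} $1$-form, and that closedness forces $c_4=0$ (citing the mechanism from~\cite[\S 9]{BHR}). Your proposal never invokes this closedness argument and consequently does not justify the absence of an $R_{,0}$ term in the displayed formula of~(2). Your argument for part~(1) is unaffected, since $R_{,0}$ integrates to zero over $M$ anyway.

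A smaller point on part~(4): the paper's argument is that the $J$-deformations of all three quantities $\nu(M)$, $\overline\eta(D*)$ and $\ln\|\quad\|_{\mathrm{CR}}$ are \emph{multiples of one and the same} local expression $\int_M\langle Q,\overset{\bullet}{J}\rangle\,\theta\wedge d\theta$ where $Q$ is Cartan's tensor; this single-rayedness is what guarantees the existence of constants $C_3$, $C_4$ that cancel the $J$-dependence. Your phrasing (``a universal linear combination of the $J$-variations of $\int R^2$ and $\int|A|^2$'') does not capture this and would not by itself yield the existence of $C_3$, $C_4$, since a two-dimensional span would generically obstruct cancellation by a single constant.
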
 
\begin{proof}
  We complexify $H$ and work in a local frame $\{Z_1,
  Z_{\overline{1}}\}$ and coframe $\{\theta^1, \theta^{\overline{1}}
  \}$, where $\theta^1(T) = 0 = \theta^{\overline{1}}(T)$ ($T$ the
  Reeb field). Under a constant scaling of contact form $\that =
  K\theta$, the relevant heat coefficient $a_{t^0;\,k}$, for $k=0,1$,
  scales as $\tra(\hat{a}_{t^0;\,k}) = K^{-2}\tra( a_{t^0;\,k})$. This
  is easily verified by an argument similar to that for
  ~\cite[(6.48)]{BGS}.  Basic invariant theory (see
  e.g.~\cite{Stanton,BHR}) then tells us that $\tra(a_{t^0;\,k})$ must
  be a universal linear combination of
  \begin{equation}
    \label{eq:23}
    R^2,\ |A|^2,\ \Delta_HR,\ R_{,0} = 2 \Re A_{11,}^{\phantom{11,}11}
      \textnormal{ 
      and } \Im A_{11,}^{\phantom{11,}11}.
  \end{equation}
    
  Now $\kappa(0)$ is the integral of a linear combination of these
  terms, which is moreover independent of the choice of $\theta$. A
  familiar argument (see e.g.~\cite{BHR}) shows that the integral of a
  linear combination of $R^2$ and $|A|^2$ can never be
  contact-invariant. Thus $\kappa(0)$ is the integral of a divergence,
  and hence vanishes. This, together with \eqref{eq:21}, proves
  assertion (1).
  
  \smallskip
  
  Consider now the differential of $\ln \|\quad \|_C$ under a
  conformal change of $\theta$. This may be seen as a real $1$-form
  $\alpha$ on the space $\Theta$ of contact forms. By
  Corollary~\ref{cor:contact-quillen-var} (2) and \eqref{eq:23} it can
  be written
  \begin{displaymath}
      \alpha_\theta(\Upsilon)  
      % = \frac{d}{d \Upsilon }\ln\|\quad \|_C
      = \int_M \Upsilon (c_1 R^2 + c_2 |A|^2 + c_3 \Delta_H R + c_4
      R_{,0} + c_5 \Im A_{11,}^{\phantom{11,}11})\, \theta \wedge d
      \theta\,,
   \end{displaymath}
   for some universal constants $c_i$. Here we identified the tangent
   space $T_\theta \Theta$ with functions $\Upsilon$ on $M$.  By
   \cite[Lemma 9.5]{BHR}, the general vanishing of such a $1$-form on
   constant $\Upsilon$ implies that $c_1= c_2 = 0$, while the fact
   that $\alpha$ is a \emph{closed} form gives $c_4 = 0$; see
   \cite{BHR} for details. This proves assertion (2).
   
   Also by (83)--(84) in \cite[\S9]{BHR}, one has
   \begin{align*}
     \frac{d}{d \Upsilon} \int_M R^2 \theta \wedge d \theta & = 8
     \int_M \Upsilon \, (\Delta_H R) \,\theta \wedge d \theta \,,
     \intertext{and} \frac{d}{d \Upsilon} \int_M |A|^2 \theta \wedge d
     \theta & = - 4 \int_M \Upsilon \, (\Im A_{11,}^{\phantom{11,}11}
     )\, \theta \wedge d \theta\,,
   \end{align*}
   leading to assertion (3).
   
   Assertion (4) is proved similarly as for the case of the contact
   eta invariant in \cite[\S9]{BHR}. The CR deformations (i.e.~of $J$)
   of the CR--invariants $\nu$, $\overline{\eta}(D*)$ and $\ln \|
   \quad \|_{\mathrm{CR}}$ are all given by multiples of
   $$
   \int_M \langle Q, \overset{\bullet}{J}\, \rangle \, \theta
   \wedge d \theta \,,
   $$
   where $Q$ is Cartan's tensor; see \cite[\S9]{BHR} for details.
\end{proof}

\begin{rem}
  As may be seen from \eqref{eq:contact-quillen-var}, in order to
  determinate the various universal constants in
  Corollary~\ref{cor:3var} and investigate whether $\|\quad \|_{C}$
  has any contact-invariant properties, one needs to calculate the
  local coefficients of $t^0$ in the diagonal small-time asymptotic
  expansion of the heat kernels of the fourth-order Laplacians we
  consider here. Formulae for calculating these coefficients are built
  into the pseudodifferential construction of the heat kernel, however
  implementing these in practice seems difficult.
\end{rem}

\section{Contact and Ray--Singer analytic torsions of CR Seifert manifolds}
\label{sec:contact-ray-singer}

We follow~\cite{BHR} to review the definition of a CR Seifert manifold
and to fix notation. Note that in dimension $3$ a calibrated almost
complex structure $J$ for the contact structure $H$ is automatically
integrable; the pair $(H,J)$ is often called a \emph{pseudoconvex CR
  structure}.

\begin{defn}
  \label{def:CR-Seifert}
  A \emph{CR Seifert manifold} is a $3$-dimensional compact manifold
  $M$ endowed with a pseudoconvex CR structure $(H, J)$ and a Seifert
  structure $\varphi: \mathbb{S}^1\to \Diff(M)$ that are compatible in
  the following sense: the circle action $\varphi$ preserves the CR
  structure and is generated by a Reeb field $T$.
\end{defn}

It is easily proved that existence of a Reeb field $T$ satisfying
$\varphi_*(d/dt) = T$ is equivalent to existence of a locally free
action of $\mathbb{S}^1$ whose (never vanishing) infinitesimal
generator preserves $(H,J)$ and is transverse everywhere to $H$.

The quotient space $\Sigma = M/\mathbb{S}^1$ is an orbifold surface
with conical singularities. Each CR Seifert manifold is then the
$\mathbb{S}^1$-bundle inside a line orbifold bundle $L$ over the
compact Riemannian orbifold $\Sigma$. Singularities of $L$ are located
above the singularities of $\Sigma$ in such a way that the total space
$M$ of the bundle is a smooth manifold: if the local fundamental group
at $p\in\Sigma$ is $\Z/\alpha\Z$ ($\alpha\in\N^*$), a generator acts
on a local chart around $p$ as $e^{i\frac{2\pi}{\alpha}}$ and on the
fibre above $p$ as $e^{i\frac{2\pi\beta}{\alpha}}$, where $\alpha$ and
$\beta$ are relatively prime integers with $1\leq\alpha<\beta$.

\begin{thm}
  \label{thm:RS}
  Let $M$ be CR Seifert manifold and $\rho:\pi_1(M)\to U(N)$ a unitary
  representation. Then:
  
  $\bullet$ The analytic torsion $T_C(\rho)$ of the twisted contact
  complex and Ray--Singer analytic torsion $T_{RS}(\rho)$ satisfy
  $$
  T_C(\rho) = \bigl(T_{RS}(\rho) \bigr)^{-1}\,.
  $$
  
  $\bullet$ The two Ray--Singer metrics on $\det H^*(M, \rho)$,
  corresponding to the de Rham and contact complexes (see
  \eqref{eq:18} and \eqref{eq:19}), coincide, i.e.
  \begin{equation*}
    \|\quad \|_C = \| \quad \|_{RS}\,,
  \end{equation*}
  via the isomorphism $ \det H^*(\mathcal{E}^*_\rho , d_H) \cong \det
  H^*(\Omega^*_\rho M, d)$ coming from
  Proposition~\ref{prop:contact-deRham}.
\end{thm}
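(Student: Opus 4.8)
The plan is to exploit the structure of CR Seifert manifolds, where the Reeb flow is periodic, so that $M$ fibres over the orbifold surface $\Sigma = M/\mathbb{S}^1$. On such a manifold the operator $\mathcal{L}_T$ generates a circle action and the forms $\EE^*$ and $\Omega^* M$ both decompose into Fourier modes under this action. First I would set up a comparison of the Hodge--de Rham Laplacians $\Delta_k^{\mathrm{dR}}$ and the contact Laplacians $\Delta_k$ on each Fourier-mode subspace. The key point is that away from the kernel, the contact complex is homotopy equivalent to the de Rham complex (Proposition~\ref{prop:contact-deRham}), and on a CR Seifert manifold this homotopy can be realised \emph{spectrally}: one can relate the nonzero spectrum of $\Delta_k$ to that of $\Delta_k^{\mathrm{dR}}$, or more precisely relate the alternating combinations $\sum (-1)^k w(k) \zeta(\Delta_k)(s)$ and $\sum (-1)^k k\, \zeta(\Delta_k^{\mathrm{dR}})(s)$ that enter the two torsions.

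The heart of the argument, I expect, is an identity at the level of zeta functions. On a CR Seifert manifold the de Rham complex splits, via the transverse circle action, as a direct sum over weights $m \in \Z$ of twisted complexes on $\Sigma$; the same holds for the contact complex, and in middle degree the second-order nature of $D$ interacts with the weight-$m$ piece in a controlled way (the Reeb derivative $\partial_t$ acts as multiplication by $im$). One then checks that for each $m \neq 0$ the contributions to the two torsion functions agree (possibly after using that $\Delta$ in \eqref{eq:10} is essentially the square of the relevant de Rham-type Laplacian in the diabatic limit, as noted in the remark after \eqref{eq:10} and in \cite{Rumin00}), while the $m = 0$ piece corresponds to the pullback of data on $\Sigma$ and can be handled by the classical $2$-dimensional Ray--Singer theory together with the fact that $\kappa(0) = 0$ in dimension $3$ (Corollary~\ref{cor:3var}). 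Assembling these mode-by-mode identities gives $\ln T_C(\rho) = -\ln T_{RS}(\rho)$, i.e.~the first bullet; the sign discrepancy is exactly the one already flagged in Remark~\ref{rem:R-torsion} and after \eqref{eq:17}, coming from our inverse convention for $T_C$ versus $T_{RS}$.

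For the second bullet, the metric statement, I would argue that the two Ray--Singer metrics differ only by the $L^2$-metrics on $\det H^*$ coming from harmonic representatives in $\EE^*$ versus $\Omega^* M$, rescaled by $T_C$ and $(T_{RS})^{-1}$ respectively; see \eqref{eq:18}--\eqref{eq:19}. Since $T_C(\rho) = (T_{RS}(\rho))^{-1}$, it remains to compare the two $L^2$-metrics under the canonical isomorphism $\det H^*(\EE_\rho, d_H) \cong \det H^*(\Omega^*_\rho M, d)$ of Proposition~\ref{prop:contact-deRham}. Here one uses that on a CR Seifert manifold the harmonic forms for the contact complex and for the de Rham complex are related in a metric-compatible way: concretely, the projection $\pi$ and injection $i$ of Proposition~\ref{prop:contact-deRham}, when restricted to harmonic spaces, are (up to explicit, weight-independent normalising factors that cancel in the alternating tensor product defining $\det H^*$) isometries. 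This is a purely cohomological-degree computation that does not see the spectrum.

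The main obstacle will be the middle-degree analysis: the second-order operator $D$ and the squared fourth-order Laplacians $\Delta_n, \Delta_{n+1}$ in \eqref{eq:10} do not fit directly into the classical de Rham comparison, so one must carefully track how the weight-$m$ Reeb mode enters — effectively $D$ behaves like a first-order operator in the transverse variable times $im$ — and verify that the zeta-function contributions from $\EE^n$ and $\EE^{n+1}$ combine to mimic those of the degree-$1$ de Rham Laplacian on $M$ with the correct weight $w(1) = 1$. I would handle this by working on $\Sigma$ with the induced (possibly twisted) operators and invoking the spectral correspondence from the diabatic limit \cite{Rumin00, BHR}, reducing everything to a statement about $2$-dimensional orbifold Laplacians where anomaly terms vanish for degree reasons. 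Once this middle-degree bookkeeping is done, the remaining steps are routine manipulations of zeta functions and determinant lines.
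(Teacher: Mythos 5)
The high-level skeleton of your plan — Fourier decompose under the circle action, compare spectra of the contact and de~Rham Laplacians, then compare $L^2$ metrics via harmonic forms — does match the paper's route. But there is a genuine gap in the middle, and it is exactly where the real analytic work sits.

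The claim that ``for each $m \neq 0$ the contributions to the two torsion functions agree'' is not correct, and cannot be made correct by invoking the diabatic limit. What the paper actually proves (Proposition~\ref{prop:comparison-torsions}) is that the two torsion functions \emph{differ} by a nonzero explicit term:
\begin{equation*}
  \kappa_{RS}(s) - \kappa_C(s) = 2\zeta(\Delta_0)(s) - \zeta(Q^+)(2s) - \zeta(Q^-)(2s),
  \qquad Q^\pm = \pm\tfrac12 + \sqrt{\tfrac14 + \Delta_0}\,.
\end{equation*}
This difference comes out of comparing the spectral decompositions of $P = D* + \delta_H d_H$ (contact side) and $d*$ on $2$-forms (de~Rham side), both available from \cite{BHR}; after Hodge duality the residual pieces involving $\mathcal{H}^0_V$, $\mathcal{H}^2_V$ cancel between the two sides, but the pieces built from $\Delta_H$ and from $Q^\pm$ do not cancel. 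The substance of the proof is then Lemma~\ref{lemma:Q}: on any \emph{odd-dimensional} Riemannian manifold the combination $Q(s) = \zeta(Q^+)(s) + \zeta(Q^-)(s) - 2\zeta(\Delta)(s/2)$ satisfies $Q(0) = Q'(0) = 0$. That lemma is not a mode-by-mode identity and does not follow from any homotopy equivalence of complexes; it is a delicate statement about a multiplicative anomaly for $\det(Q^+Q^-)$ versus $\det Q^+ \det Q^-$, proved by deforming $Q^\pm_\lambda = \pm\lambda + \sqrt{\lambda^2 + \Delta}$, differentiating in $\lambda$, and controlling the residue at $s=0$ via the Poisson kernel asymptotics of Proposition~\ref{prop:poisson} (the B\"ar--Moroianu development of $\tr(e^{-tP^{1/2}})$), where oddness of the dimension is exactly what kills the would-be pole. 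Your plan has no replacement for this step; asserting mode-wise equality would give $\kappa_{RS} = \kappa_C$ identically, which is false.

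On the metric statement you are over-engineering. There is no issue of ``weight-independent normalising factors'' in the determinant line: on a CR Seifert manifold the Tanaka--Webster torsion $A = \mathcal{L}_T J$ vanishes, and by \cite[Proposition 12]{Rumin94} the contact-harmonic and de~Rham-harmonic representatives of $H^*(M,\rho)$ are \emph{literally the same forms}. So the two $L^2$ metrics on $\det H^*(M,\rho)$ coincide outright, and the metric equality reduces immediately to the torsion identity from the first bullet.
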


The remainder of this section will be devoted to the proof of these
results.

We first need to compare the two spectra coming from the de Rham and
contact complexes. This has been done in \cite[\S\S7, 8]{BHR} in the
untwisted case, i.e.~for a trivial representation.  We will rely on
and refer to the spectral analysis done there and point out the few
differences coming from the use of the representation $\rho$ here.

\subsection{Circle action and Fourier analysis}
\label{sec:circle-acti-four}

Let $V$ be the flat complex vector bundle over $M$ associated to
$\rho$. It is the quotient of the trivial bundle $\widetilde M \times
\C^N$ over the universal cover $\widetilde M$ of $M$ by the deck
transformations $\gamma .(m, v)= (\tau(\gamma)m, \rho (\gamma) v)$. In
what follows the contact complex is twisted by $\rho$ in order to take
values in $V$.

Let $\varphi_t$ be the circle action on $M$ induced by the Reeb field
$T$. It may be lifted on $V$, by parallel transport for the flat
connection $\nabla_\rho$, but no longer as a circle action. We have
instead
$$
\varphi_{2\pi} = \rho(f) ,
$$
where $f = \varphi_{[0,2\pi]}(m)$ is the generic closed orbit of
the action, as seen in $\pi_1(M)$. This $f$ is central, as comes from
the presentation of the fundamental group of Seifert manifolds.
\begin{prop}[{see e.g.~\cite{FS, Nicolaescu, Scott}}]
  \label{prop:pi1}
  Let $M$ be the circle $V$-bundle $L$ of rational degree $d = b +
  \sum_i \frac{\beta_i }{\alpha_i}$ over the orbifold surface $\Sigma$
  of integral genus $g$, with $n$ conical points $x_i$ of type
  $(\alpha_i,\ \beta_i)$. Then $\pi_1(M)$ admits the presentation
  \begin{multline*}
    \pi_1(M) = \langle f,\ a_j,\ b_j\ (1\leq j\leq g),\ g_i\ (1\leq i
    \leq
    n) \mid \\
    [a_j, f]= [b_j, f] = [g_i, f] = g_i^{\alpha_i} f^{-\beta_i} =
    f^{b} \prod_j [a_j, b_j] \prod_i g_i = 1 \rangle.
  \end{multline*}
\end{prop}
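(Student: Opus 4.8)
The plan is to recover $M$ from the standard Seifert decomposition and then apply van Kampen's theorem. First I would fix disjoint closed disks $D_0, D_1, \dots, D_n\subset\Sigma$, with $D_i$ a neighbourhood of the conical point $x_i$ for $1\le i\le n$ and $D_0$ a neighbourhood of an extra \emph{regular} point, and set $\Sigma_0=\Sigma\setminus\bigcup_{i=0}^n\mathrm{int}(D_i)$. This is a compact orientable surface of genus $g$ with $n+1$ boundary circles, so $\pi_1(\Sigma_0)$ is \emph{free} on the handle generators $a_j,b_j$ ($1\le j\le g$) together with loops $g_1,\dots,g_n$ encircling $x_1,\dots,x_n$, the loop $g_0$ around $D_0$ being forced to equal $\bigl(\prod_j[a_j,b_j]\prod_{i=1}^n g_i\bigr)^{-1}$. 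Over $\Sigma_0$ the circle bundle $M\to\Sigma$ is trivial, since $H^2(\Sigma_0;\Z)=0$, so $M_0:=M|_{\Sigma_0}\cong\Sigma_0\times\mathbb{S}^1$ and hence
$$
\pi_1(M_0)\cong\pi_1(\Sigma_0)\times\Z\langle f\rangle,
$$
with $f$ the class of a regular fibre; in particular $f$ is central, which already yields the relations $[a_j,f]=[b_j,f]=[g_i,f]=1$.

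Next I would reconstruct $M$ by gluing back the $n+1$ solid tori $V_0,\dots,V_n$ fibred over $D_0,\dots,D_n$, applying van Kampen once per gluing along a boundary $2$-torus. For each $V_i$ the meridian maps to a curve $m_i$ supported near $\partial D_i$, a word in $g_i$ and $f$ (primitive on the torus precisely because $\gcd(\alpha_i,\beta_i)=1$, which is why $M$ is a genuine manifold), while the core of $V_i$ is identified with the image of a longitude and so introduces no new generator; hence the only new relation is $m_i=1$. With the standard normalisation of the Seifert data — and a compatible choice of orientations — one has $m_i=g_i^{\alpha_i}f^{-\beta_i}$ for the singular fibres $1\le i\le n$, and $m_0=f^{b}\prod_j[a_j,b_j]\prod_{i=1}^n g_i$ for the distinguished regular fibre carrying the integral Euler contribution $b$ (after substituting for $g_0$ and using centrality of $f$). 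Adjoining $m_0=\dots=m_n=1$ to the presentation of $\pi_1(M_0)$ produces exactly the asserted presentation, and this is where the rational degree $d=b+\sum_i\beta_i/\alpha_i$ appears, as the total Euler number of the reassembled bundle.

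The main obstacle — indeed essentially the only subtlety — is the orientation bookkeeping needed to get the exponents $\beta_i$ (rather than $-\beta_i$ or $\alpha_i-\beta_i$), the integer $b$, and the identity $d=b+\sum_i\beta_i/\alpha_i$ with exactly the signs displayed; this forces one to fix the trivialisation of $M_0$, the symplectic bases of the boundary tori, and the normalisation of the Seifert invariants coherently. An equivalent and cleaner route I would mention is to work with the central extension $1\to\Z\langle f\rangle\to\pi_1(M)\to\pi_1^{\mathrm{orb}}(\Sigma)\to 1$, where $\pi_1^{\mathrm{orb}}(\Sigma)=\langle a_j,b_j,g_i\mid g_i^{\alpha_i}=1,\ \prod_j[a_j,b_j]\prod_i g_i=1\rangle$, and to identify its class in $H^2(\pi_1^{\mathrm{orb}}(\Sigma);\Z)$ with the Euler class of the orbifold line bundle $L$; lifting the two families of relations then gives $g_i^{\alpha_i}=f^{\beta_i}$ and $\prod_j[a_j,b_j]\prod_i g_i=f^{-b}$, the same presentation. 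Since the statement is classical, I would keep the write-up short and refer to \cite{FS,Nicolaescu,Scott} for the conventions.
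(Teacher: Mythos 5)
The paper states this proposition as a classical fact and simply cites \cite{FS,Nicolaescu,Scott} without giving a proof, so there is no in-paper argument to compare against. Your sketch is the standard one --- trivialise the circle bundle over the bounded surface $\Sigma_0$ obtained by excising disks about the cone points and one regular point, identify $\pi_1(M_0)\cong\pi_1(\Sigma_0)\times\mathbb{Z}\langle f\rangle$, and apply van Kampen once per solid-torus filling, killing each meridian --- and it is correct, with the substance correctly located in fixing a coherent trivialisation, symplectic bases on the boundary tori, and normalisation of the Seifert data so that the meridians are literally $g_i^{\alpha_i}f^{-\beta_i}$ and $f^{b}\prod_j[a_j,b_j]\prod_i g_i$; you also rightly observe that $\gcd(\alpha_i,\beta_i)=1$ is what makes each meridian primitive and $M$ a genuine manifold. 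The alternative phrasing via the central extension $1\to\mathbb{Z}\langle f\rangle\to\pi_1(M)\to\pi_1^{\mathrm{orb}}(\Sigma)\to1$ and the identification of its extension class with the orbifold Euler class of $L$ is an equally clean route and matches how \cite{FS,Nicolaescu} present the material. Either version would serve as a proof; since the paper treats the statement as citable, a short write-up deferring the convention bookkeeping to those references, as you propose, is entirely appropriate.
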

We can split $V$ into irreducible components, where we have
\begin{displaymath}
  \rho(f) = e^{2i\pi x} 
\end{displaymath}
for some $x \in [0,1[$. We recover a circle action on each such
component $V^x$ by setting
\begin{displaymath}
  \psi_t= e^{-i t x}\varphi_t\,.
\end{displaymath}
Using the circle action $\psi_t$ one can still perform a Fourier
decomposition as in \cite[\S 7]{BHR}. Namely, for $f \in V^x$, one has
$$
f = \sum_{n \in \Z} \pi_n f \quad \mathrm{where}\quad \pi_n f =
\frac{1}{2\pi} \int_0^{2\pi} e^{-int }\psi_t (f) dt
$$
satisfies $\psi_t(\pi_n f) = e^{int} \pi_n f$ and $T (\pi_n f) = i
(n+x) \pi_n f$. Hence the spectrum of $iT$ becomes the shifted $\Z -
x$ on $V^x$. For $\lambda=-n-x$ we note
\begin{equation}
  \label{eq:24}
  V_\lambda = V^x_n = V^x \cap \{iT = -n -x\} = \pi_n (V^x) \,.
\end{equation}

As the circle action preserves the metric and the whole
pseudohermitian structure $(H,\theta, J)$, we can split both the
Hodge--de Rham and the contact complex spectra into their Fourier
components. This is useful for comparing the spectra.

\subsection{Comparing the Riemannian and sub-Riemannian spectra}
\label{sec:contact-spectrum}

Following Propositions 7.2 and 7.4 in \cite{BHR}, we consider the
following spaces.
\begin{defn}
  \label{defn:H_V}
  $\bullet$ Let $\mathcal{H}^2_V$ be the space of vertical $2$-forms
  $\alpha = \theta \wedge \beta$, with values in $V$, such that both
  $\alpha$ and $J\alpha$ are closed.
  
  $\bullet $ Let also $\mathcal{H}^0_V$ be the space of pluri-CR
  functions in $V$, i.e.
  $$
  \mathcal{H}^0_V = \ker (\Delta_H - T^2) = \ker
  \overline{\Box}_{V}\Box_{V}
  $$
  with $\Box_{V} = \partial_{V}^* \partial_{V}$ and
  $\overline{\Box}_{V} = \overline{\partial}_{V}^*
  \overline{\partial}_{V_\rho}$.
\end{defn}

According to \cite[(63)--(68)]{BHR} the non-zero spectrum of the
non-positive second-order Laplacian $P = D* + \delta_H d_H$ on
$2$-forms splits into
\begin{equation}
  \label{eq:25}
  \begin{split}
    \spec^* (P) & = \spec^* (D*) \ \cup \ \spec^*(\Delta_H) \\
    &  = \spec^*(\Delta_H) \ \cup \ \spec^*(- \Delta_H) \\
    & \quad \quad \bigcup \spec^* (-JT \mid \mathcal{H}^2_V) \setminus
    \spec^* (-|T|\mid \mathcal{H}^0_V) ,
  \end{split}
\end{equation}
where $\Delta_H = d_H \delta_H + \delta_H d_H$ is the horizontal
second-order Laplacian on functions.

The torsion function $\kappa_C$ of the contact complex is defined
using the fourth-order Laplacians $\Delta_H^2$ on functions, and
$\Delta_1 = D^* D + (d_H \delta_H)^2$ on $1$-forms, with $\Delta_1$
conjugated to $P^2$ by Hodge $*$ duality. Hence \eqref{eq:25} yields
\begin{equation}
  \label{eq:26}
  \spec^*(\Delta_1)  = 2 \times \spec^*(\Delta_H^2)
  \bigcup \spec^* (-T^2 \mid \mathcal{H}^2_V) \setminus \spec^*
  (-T^2\mid \mathcal{H}^0_V)\,.
\end{equation}
Finally by \eqref{eq:20} the torsion function of the contact complex
reads
\begin{equation}
  \label{eq:27}
  \begin{aligned}
    \kappa_C(s) & = 2 \zeta(\Delta_H^2)(s) -  \zeta(\Delta_1)(s) \\
    & = \zeta^*(-T^2\mid \mathcal{H}^0_V) (s) - \zeta^*(- T^2 \mid
    \mathcal{H}^2_V)(s) + \kappa(M, \rho) ,
  \end{aligned}
\end{equation}
where we have set
\begin{equation}
  \label{eq:28}
  \begin{aligned}
    \kappa(M, \rho) &= 2 \dim (\ker \Delta_H) - \dim (\ker \Delta_1) \\
    & = 2 \dim (H^0(M, \rho)) - \dim (H^1(M, \rho)) \,,
  \end{aligned}
\end{equation}
since the twisted contact complex is a resolution computing the
cohomology of $M$ with values in $V$.

\medskip

We proceed similarly for the Hodge--de Rham spectrum, and work again
with the calibrated metric $ g = d\theta(\cdot,J\cdot) + \theta^2 $.
Set
\begin{equation}
  \label{eq:29}
  Q^\pm = \pm \frac{1}{2} + \sqrt{\frac{1}{4}+ \Delta_0}
\end{equation}
where $\Delta_0$ is Hodge--de Rham Laplacian acting on functions.
According to \cite[Corollary 7.6]{BHR} the spectrum of $d*$ on
$2$-forms splits as
\begin{equation}
  \label{eq:30}
  \spec^*(d*)  = \spec^* (Q^+) \bigcup \spec^*(- Q^- \mid
  (\mathcal{H}^0_V)^\bot) \bigcup \spec^*(-JT \mid \mathcal{H}^2_V)\,.
\end{equation}
By Definition \ref{defn:H_V}, we have $\Delta_0 = \Delta_H - T^2 = |T|
- T^2$ on $\mathcal{H}^0_V$ so that
$$
Q^- = - 1/2 + \sqrt{1/4+ \Delta_0} = |T| \quad \mathrm{on}\quad
\mathcal{H}^0_V\,.
$$
Then \eqref{eq:30} also reads
\begin{equation*}
  \spec^*(d*)  = \spec^* (Q^+) \bigcup \spec^*(- Q^-) \setminus
  \spec^*(-|T|\mid \mathcal{H}^0_V) \bigcup
  \spec^*(-JT \mid \mathcal{H}^2_V)\,,
\end{equation*}
and since $\delta d$ on $1$-forms is $*$ conjugated to $(d*)^2$ on
$2$-forms, we get
\begin{equation}
  \label{eq:31}
  \spec^*(\delta d)  = \spec^*
  (Q^+)^2 \bigcup \spec^* (Q^-)^2
  \bigcup \spec^*(- T^2 \mid \mathcal{H}^2_V) \setminus
  \spec^* (-T^2 \mid \mathcal{H}^0_V) \,.
\end{equation}

Now following our convention on analytic torsion, inverse to the
original definition of Ray--Singer \cite{RS} (see
Remark~\ref{rem:R-torsion} and \eqref{eq:17}), the torsion function of
de Rham complex reads in dimension $3$ as
\begin{align*}
  \kappa_{RS} (s) & = \sum_{k=0}^3 (-1)^{k+1} k \zeta (\Delta_k)(s)
  \\
  & = 3 \zeta(\Delta_0)(s) - \zeta (\Delta_1)(s) \,,
\end{align*}
with $\Delta_i = d\delta + \delta d$ the Hodge--de Rham Laplacians on
$i$-forms. Using
$$
\zeta^*(\Delta_1)(s) = \zeta^*(\delta d)(s) +
\zeta^*(\Delta_0)(s)\,
$$
and \eqref{eq:31} one finds that
\begin{align*}
  \kappa_{RS}(s) & = 2 \zeta^*(\Delta_0)(s) - \zeta^*( \delta d)(s) +
  3 \dim (\ker \Delta_0) - \dim (\ker \Delta_1) \\
  & = 2 \zeta (\Delta_0)(s) - \zeta
  (Q^+)(2s) - \zeta(Q^-)(2s)\\
  & \quad - \zeta^*(-T^2 \mid \mathcal{H}^2_V) (s) + \zeta^*(- T^2
  \mid \mathcal{H}^0_V )(s) + \kappa(M,\rho)\,,
\end{align*}
since $\ker Q^+ = \{0\}$ and $\ker Q^- = \ker \Delta_0$ by the
definition \eqref{eq:29} of $Q^\pm$. Comparing to the contact-complex
torsion \eqref{eq:28} we have shown the following result.
\begin{prop}
  \label{prop:comparison-torsions}
  On a CR Seifert manifold, the Ray--Singer and contact complex
  torsion functions twisted by a unitary representation satisfy
  \begin{equation}
    \label{eq:32}
    \kappa_{RS}(s) - \kappa_C(s)= 2 \zeta (\Delta_0)(s) - \zeta
    (Q^+)(2s) - \zeta(Q^-)(2s)\,, 
  \end{equation}
  where $\Delta_0$ is the Hodge--de Rham Laplacian on functions and $
  Q^\pm = \pm 1/2 + \sqrt{1/4 + \Delta_0}$.
\end{prop}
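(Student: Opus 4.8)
The plan is to establish the identity \eqref{eq:32} by assembling the two spectral decompositions already derived in the excerpt and performing the Fourier-mode bookkeeping. Concretely, starting from the definitions
$$
\kappa_C(s) = 2\zeta(\Delta_H^2)(s) - \zeta(\Delta_1)(s), \qquad
\kappa_{RS}(s) = 3\zeta(\Delta_0)(s) - \zeta(\Delta_1^{dR})(s),
$$
I would subtract and regroup. For the contact side the key input is \eqref{eq:26}--\eqref{eq:27}, which already rewrites $\kappa_C(s)$ in terms of $\zeta^*(-T^2\mid\mathcal{H}^0_V)$, $\zeta^*(-T^2\mid\mathcal{H}^2_V)$ and the cohomological term $\kappa(M,\rho)$. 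For the de Rham side the key input is \eqref{eq:31}, coming from \cite[Corollary 7.6]{BHR}, which gives $\spec^*(\delta d)$ on $1$-forms as the union of $\spec^*(Q^+)^2$, $\spec^*(Q^-)^2$ and the $\mathcal{H}^2_V$-contribution, minus the $\mathcal{H}^0_V$-contribution.

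The main steps, in order, would be: (1) write $\zeta^*(\Delta_1^{dR})(s) = \zeta^*(\delta d)(s) + \zeta^*(\Delta_0)(s)$ on $1$-forms using the Hodge decomposition $\Delta_1^{dR} = \delta d \oplus d\delta$ together with the fact that $d\delta$ on $1$-forms is isospectral to $\Delta_0$ on functions (away from kernels); (2) substitute \eqref{eq:31} to replace $\zeta^*(\delta d)$ by $\zeta(Q^+)(2s) + \zeta(Q^-)(2s)$ plus the $\mathcal{H}^2_V$ and $\mathcal{H}^0_V$ zeta functions, being careful that $\ker Q^+ = \{0\}$ and $\ker Q^- = \ker\Delta_0$ (from the definition \eqref{eq:29}), so that the passage between $\zeta$ and $\zeta^*$ absorbs exactly the right multiplicities and reproduces the cohomological constant $3\dim(\ker\Delta_0) - \dim(\ker\Delta_1^{dR}) = \kappa(M,\rho)$; (3) form the difference $\kappa_{RS}(s) - \kappa_C(s)$ and observe that the two occurrences of $\zeta^*(-T^2\mid\mathcal{H}^2_V)$ cancel, as do the two occurrences of $\zeta^*(-T^2\mid\mathcal{H}^0_V)$, and that the two copies of $\kappa(M,\rho)$ cancel as well, leaving only $2\zeta(\Delta_0)(s) - \zeta(Q^+)(2s) - \zeta(Q^-)(2s)$.

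The bulk of this is bookkeeping, since the hard spectral analysis has been imported wholesale from \cite{BHR}; the one place demanding care is the kernel accounting in step (2). One must track which of the operators $Q^\pm$, $\delta d$, $\Delta_0$ contribute zero eigenvalues and with what multiplicity, so that the conversions between full zeta functions (with the convention including $\dim\ker$) and starred zeta functions (non-zero spectrum only) balance correctly; a sign error or an off-by-one in the Betti numbers would spoil the clean cancellation. The fact that the $\mathcal{H}^2_V$-terms appear with \emph{opposite} signs in $\kappa_C$ (from \eqref{eq:27}) and in $\kappa_{RS}$ is what makes the dynamical $-T^2$ contributions disappear entirely from the difference, which is the structural point of the proposition: the residual discrepancy \eqref{eq:32} is governed purely by the scalar Laplacian $\Delta_0$ and its shifts $Q^\pm$, setting up the subsequent analysis of that residual term.
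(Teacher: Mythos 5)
Your proposal follows the same route as the paper: express $\kappa_C$ via~\eqref{eq:26}--\eqref{eq:27}, rewrite $\kappa_{RS}$ using the Hodge split $\zeta^*(\Delta_1)(s)=\zeta^*(\delta d)(s)+\zeta^*(\Delta_0)(s)$ together with~\eqref{eq:31}, absorb kernels to pass from $\zeta^*$ to $\zeta$, and let the $\mathcal H^0_V$, $\mathcal H^2_V$ and $\kappa(M,\rho)$ terms cancel in the difference. That is exactly the paper's proof, so the plan is sound.

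One small slip in your step~(2): the constant produced at the $\zeta^*$ stage is $3\dim\ker\Delta_0-\dim\ker\Delta_1$, and this is \emph{not} equal to $\kappa(M,\rho)=2\dim H^0-\dim H^1=2\dim\ker\Delta_0-\dim\ker\Delta_1$. It only becomes $\kappa(M,\rho)$ \emph{after} the kernel absorption you describe: writing $2\zeta^*(\Delta_0)=2\zeta(\Delta_0)-2\dim\ker\Delta_0$ subtracts two units of $\dim\ker\Delta_0$, while converting $-\zeta^*(Q^-)(2s)$ to $-\zeta(Q^-)(2s)$ adds one back (since $\ker Q^-=\ker\Delta_0$, whereas $\ker Q^+=\{0\}$ contributes nothing), so the intermediate constant drops by $\dim\ker\Delta_0$, giving $\kappa(M,\rho)$. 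The ``$=$'' as you wrote it would force $\dim\ker\Delta_0=0$, which fails already for the trivial representation. This is precisely the off-by-one you flagged as dangerous, and the paper's displayed two-line computation is where it is handled.
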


Note at this stage that the right-hand side of \eqref{eq:32} vanishes
at $s=0$, as needed by the vanishing of both torsion functions at
$s=0$; see \cite{RS} and Corollary \ref{cor:contact-quillen-var}. This
also follows from $\zeta(\Delta_0)(0)= 0$, for the Hodge--de Rham
Laplacian in odd dimension, and that $\zeta(Q^+)(0) = -
\zeta(Q^-)(0)$, by \cite[Lemma 8.5]{BHR}.

\subsection{Proof of Theorem \ref{thm:RS}}
\label{sec:proof-theorem-RS}

In view of Proposition~\ref{prop:comparison-torsions} we need to show
that if we set
$$
Q(s) = \zeta(Q^+)(s) + \zeta(Q^-)(s)-2 \zeta (\Delta)(s/2),
$$
writing $\Delta$ instead of $\Delta_0$ (the Hodge--de Rham
Laplacian on functions) for brevity, then $Q'(0) = 0$. We have a hint
that this is true by examining finite energy cut-offs: at any finite
spectral level $(\Delta \leq \lambda)$ it holds that
\begin{align*}
  \zeta(Q^+)'(0) + \zeta'(Q^-)'(0) & = - \ln\det(Q^+) - \ln\det(Q^-) \\
  & = - \ln \det (Q^+ \times Q^-) \\
  &= - \ln \det \Delta\\
  & = \zeta(\Delta)'(0)\,.
\end{align*}
Hence $Q'(0)$ is insensitive to finite eigenvalues and behaves like a
pseudodifferential invariant. It may indeed be seen as a
multiplicative anomaly for the regularized determinant of the product
of the two commuting operators $Q^\pm$. As thus $Q'(0)$ is related to
a Wodzicky residue-type invariant; see \cite[\S 6.5]{Kassel}.

In fact the spectral function $Q$ makes sense on any compact
Riemannian manifold.  The following result holds in a far more general
setting than CR Seifert manifolds.
\begin{lemma}
\label{lemma:Q}
On any odd-dimensional compact Riemannian manifold $Q(0) = Q'(0) = 0$.
\end{lemma}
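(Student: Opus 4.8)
The plan is to work with the Mellin-transform representation of the zeta functions and reduce the whole statement to a comparison of heat traces. Write $\Delta=\Delta_0$ for the scalar Hodge--de Rham Laplacian on a closed odd-dimensional Riemannian manifold $M^N$, and recall $Q^\pm = \pm\tfrac12 + \sqrt{\tfrac14+\Delta}$, so that $Q^+Q^- = \Delta$ and $Q^+ - Q^- = \id$ on the orthogonal complement of constants, while $Q^+ = \sqrt{\tfrac14+\Delta}-\tfrac12$ vanishes exactly on $\ker\Delta$ and $Q^- = \sqrt{\tfrac14+\Delta}+\tfrac12$ is strictly positive. The first step is bookkeeping of zero modes: since our convention includes the kernel dimension in $\zeta$, the contributions of $\ker\Delta$ to $\zeta(Q^+)(s)$, $\zeta(Q^-)(s)$ and $2\zeta(\Delta)(s/2)$ are $\dim\ker\Delta$, $\dim\ker\Delta$ and $2\dim\ker\Delta$ respectively, hence cancel in $Q(s)$; so it suffices to treat the starred zeta functions built from the strictly positive spectrum, where $Q^\pm$ are genuinely invertible pseudodifferential operators.

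Next I would set up the heat-kernel side. The operator $A:=\sqrt{\tfrac14+\Delta}$ is a positive elliptic first-order pseudodifferential operator, so $\operatorname{Tr}(e^{-tA})$ has a complete small-$t$ asymptotic expansion $\sum_{j\ge 0} c_j t^{-N+j} + (\text{possible }\log t\text{ terms})$ in the standard way (e.g.\ via the resolvent parametrix, or by writing $A^{-s}$ through a contour integral); and $Q^\pm = A \mp \tfrac12$. The key analytic input is that $\zeta(Q^\pm)(0)$ and $\zeta'(Q^\pm)(0)$ are determined by this expansion. The cleaner route, though, avoids computing individual heat expansions: I would instead exploit the algebraic relation directly. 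On the positive spectrum one has, for $\Re s$ large,
\begin{equation*}
  \zeta(Q^+)(s) + \zeta(Q^-)(s) - 2\zeta(\Delta)(s/2)
  = \sum_{\mu\in\operatorname{spec}^*\Delta}\Bigl( (q^+_\mu)^{-s} + (q^-_\mu)^{-s} - 2\mu^{-s/2}\Bigr),
\end{equation*}
where $q^\pm_\mu = \pm\tfrac12 + \sqrt{\tfrac14+\mu}$ and $q^+_\mu q^-_\mu = \mu$. Writing $q^\pm_\mu = \sqrt\mu\,(\sqrt{1+1/(4\mu)} \pm 1/(2\sqrt\mu))$ and expanding, each summand is $O(\mu^{-s/2-1})$ for large $\mu$, so the difference $Q(s)$ converges for $\Re s > N-2$ and in particular is holomorphic near $s=0$ with no pole there; this already forces the ``$Q(0)=0$'' part once one checks the constant term, since $Q(s)$ is the analytic continuation of a series that is manifestly regular (not just meromorphic) in a neighbourhood of $0$ and whose value at $0$ is the regularized count $\zeta(Q^+)(0)+\zeta(Q^-)(0)-2\zeta(\Delta)(0)$, and one knows $\zeta(\Delta)(0)=0$ in odd dimensions while $\zeta(Q^+)(0)=-\zeta(Q^-)(0)$ (this is exactly \cite[Lemma 8.5]{BHR}, or can be re-derived here from $Q^+ = Q^- - \id$).

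The heart of the matter is $Q'(0)=0$, i.e.\ the multiplicative-anomaly statement $\det{}'(Q^+)\det{}'(Q^-) = \det{}'(\Delta)$ for the zeta-regularized determinants (primes denoting omission of zero modes). The plan is: (i) observe $Q^+Q^- = \Delta = (Q^-)^2 + Q^-$ on the positive spectrum, so the anomaly is that of the commuting pair $(Q^-, Q^-+\id)$, equivalently of $B$ and $B+\id$ where $B = Q^-$ is a positive \emph{first-order} elliptic $\Psi$DO; (ii) invoke the multiplicative-anomaly formula of Kontsevich--Vishik / Wodzicki (see \cite[\S6.5]{Kassel}), which expresses $\log\det(B_1B_2) - \log\det B_1 - \log\det B_2$ purely in terms of noncommutative residues of $\log(B_1B_2^{-1})$ and related operators — a \emph{local} quantity; (iii) show this local term vanishes in odd dimensions because noncommutative residues of $\Psi$DOs of \emph{integer} order on an \emph{odd}-dimensional manifold vanish (the residue density is an odd-order homogeneous term of an even/odd parity that integrates to zero on an odd-sphere fibre). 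The operators here have integer order ($B$ is order $1$, $\log(B(B+\id)^{-1})$ is order $-1$, etc.), so all relevant residues vanish and the anomaly is $0$. An alternative, more self-contained execution of (ii)--(iii): differentiate in $t$ the family $B_t = B + t\,\id$ and show $\frac{d}{dt}\bigl(\log\det B_t + \log\det(B_t - \id) - \log\det(B_t(B_t-\id))\bigr)$ equals a multiple of a residue $\operatorname{res}(B_t^{-1} - \dots)$ which again vanishes in odd dimension; integrating from $t=0$ (where the identity is the trivial $0=0$ after the zero-mode bookkeeping) to $t=\tfrac12$ gives the claim. The main obstacle, as expected, is precisely this step: one must correctly identify the anomaly as a noncommutative residue and then verify the odd-dimensional vanishing, being careful that $Q^\pm$ are $\Psi$DOs and not differential operators, and that including/excluding kernels is handled consistently throughout.
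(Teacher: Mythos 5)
Your reduction to the starred zeta functions, and the identification of $Q'(0)=0$ with the multiplicative--anomaly statement $\det{}'(Q^+)\,\det{}'(Q^-)=\det{}'(\Delta)$, are both correct and show a good grasp of what is actually going on; the paper itself flags this interpretation after Proposition~\ref{prop:comparison-torsions} with a reference to~\cite{Kassel}. However, the step in which you dispose of the anomaly has a genuine gap. The assertion that ``noncommutative residues of $\Psi$DOs of integer order on an odd-dimensional manifold vanish'' is false: on $M^m$ with $m$ odd, the operator $\Delta^{-m/2}$ has integer order $-m$ and $\operatorname{res}(\Delta^{-m/2})$ is a nonzero multiple of $\operatorname{vol}(S^*M)$, since its $(-m)$-homogeneous symbol is $|\xi|^{-m}$, which is an \emph{even} function of $\xi$ and does not integrate to zero on the sphere. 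The Kontsevich--Vishik odd-dimension vanishing requires the operators to lie in the \emph{odd class}, i.e.\ $\sigma_{m-j}(x,-\xi)=(-1)^{m-j}\sigma_{m-j}(x,\xi)$, which forces $\sigma_{-m}$ to be odd in $\xi$ when $m$ is odd. Your operators do not satisfy this: $Q^\pm$ have principal symbol $|\xi|$ (even, but of odd order $1$), and $\log\bigl(Q^+(Q^-)^{-1}\bigr)$ has principal symbol $|\xi|^{-1}$ (even, of odd order $-1$); neither is in the odd class, so the general theorem does not apply off the shelf. To salvage the argument you would have to use that $Q^\pm$ and $\log(Q^+(Q^-)^{-1})$ are \emph{functions of} $\Delta$ and that the relevant residue can be expanded as a series in $\operatorname{res}(\Delta^{-k})$, each of which does vanish in odd dimension because $\Delta^{-k}$ \emph{is} in the odd class; this is an extra, nontrivial step that your write-up does not supply. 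Your alternative ``self-contained'' route also has a problem: with $B_t=B+t\,\id$ and $B=Q^-$ (or $B=A$), the identity at $t=0$ is not the trivial $0=0$ you claim, so there is no clean starting point for the integration in $t$.

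The paper sidesteps all of these subtleties with a more elementary device: it deforms along $Q_\lambda^\pm=\pm\lambda+\sqrt{\lambda^2+\Delta}$, for which the identity is \emph{genuinely} trivial at $\lambda=0$ (both factors become $\sqrt{\Delta}$), then computes $\partial_\lambda\bigl(F^+(\lambda,s)+F^-(\lambda,s)\bigr)$ via Mellin transforms and Duhamel, reducing the question to the residue at $s=0$ of an explicit integral against the Poisson trace $\tr^*(e^{-t\sqrt{\lambda^2+\Delta}})$. Feeding in the heat expansion $\tr(e^{-tP})\sim\sum_k c_k t^{k-m/2}$ and the power series of $g(\lambda,t,s)=\int_0^t u^{s-1}\sinh(u\lambda)\,du$, the poles sit at $m=2(p+k+1)$, which cannot occur for odd $m$ --- an explicit integer-parity argument that does the work your noncommutative-residue claim was meant to do, without ever needing the odd-class machinery or worrying about the symbol parity of $Q^\pm$. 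In short: your plan identifies the right object (an anomaly controlled by a residue) but mis-states the residue vanishing; the paper's proof is a concrete instance of exactly this vanishing, established by a direct computation that you would need in any case to justify your approach.
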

\begin{proof}
  Consider the one-parameter deformation
  $$
  Q_{\lambda}^\pm = \pm \lambda + \sqrt{\lambda^2 + \Delta}\,,
  $$
  so that, with $\lambda = 1/2$, $Q_{1/2}^\pm$ coincides with our
  original $Q^\pm$. Note that the product formula
  $$
  Q_\lambda^+ \times Q_\lambda^- = \Delta
  $$
  we already mentioned is preserved during the deformation.  By
  ellipticity of $Q^\pm_\lambda$ and a Mellin transform
  $$
  \zeta^*(Q^\pm_\lambda)(s) = \frac{1}{\Gamma(s)}\int_0^{+\infty}
  t^{s-1}(\tr(e^{-tQ^\pm_\lambda})-\dim\ker Q^\pm_\lambda)dt
  $$
  is holomorphic for large $s$. Define a function, holomorphic for
  large $s$,
  $$
  F^\pm(\lambda, s) = \int_0^{+\infty } t^{s-1} \tr^* (e^{-t
    Q_{\lambda}^{\pm}}) dt \,,
  $$
  where here and in the sequel $\tr^*(P) = \tr(P) -
  P(\textnormal{const.~function} =1)$. In particular
\begin{align*}
  \zeta(Q^+_{1/2})(s) = \zeta^*(Q^+_{1/2})(s) &=
  \frac{1}{\Gamma(s)}\int_0^{+\infty} t^{s-1}\tr(e^{-tQ^+_{1/2}})dt\\
  &= 1+ \Gamma(s)^{-1}F^+(1/2,s)\,,
\end{align*}
and
\begin{align*}
  \zeta(Q^-_{1/2})(s) = 1+ \zeta^*(Q^-_{1/2})(s) &= 1+
  \frac{1}{\Gamma(s)}\int_0^{+\infty} t^{s-1}\tr(e^{-tQ^-_{1/2}} - 1)dt\\
  &= 1+ \Gamma(s)^{-1}F^-(1/2,s)\,.
\end{align*}
Thus
\begin{align*}
  Q(s) & = 1 + \Gamma (s)^{-1} F^+(1/2,s) + 1 + \Gamma^{-1}(s)
  F^-(1/2, s) - 2 -2 \Gamma(s)^{-1}F(0,s) \\
  & \sim s (F^+(1/2,s) + F^-(1/2, s) - 2 F(0,s))
\end{align*}
when $s \rightarrow 0$, with $F(0,s)= F^+(0,s) = F^-(0,s) $. Therefore
we need to show that
\begin{equation*}
  F^+(1/2, 0) + F^-(1/2,0) - 2 F(0, 0) = 0,
\end{equation*}
for which it clearly suffices to show that
\begin{equation}
\label{eq:33}
  \partial_\lambda F^+ (\lambda, 0) + \partial_\lambda F^-(\lambda, 0)
  = 0.
\end{equation}

Now one has, for the smooth family of commuting elliptic first-order
operators $Q_\lambda$,
\begin{align*}
  \frac{d}{d \lambda} \bigl( e^{-t Q_{\lambda}^\pm} \bigr)& = -t
  \Bigl( \pm 1 + \frac{\lambda}{\sqrt{\lambda^2 + \Delta}} \Bigr)
  e^{-t
    Q_{\lambda}^\pm} \\
  & = \frac{\pm t}{\sqrt{\lambda^2 + \Delta}} \frac{d}{dt} \bigl(
  e^{-t Q_{\lambda}^\pm} \bigr) \,,
\end{align*}
so that
\begin{align*}
  \partial_\lambda F^+ (\lambda, s) + \partial_\lambda F^-(\lambda, s)
  & = \int_0^{+\infty} t^s \frac{d}{dt} \tr^* \Bigl(\frac{e^{-t
      Q_{\lambda}^+} - e^{-t Q_{\lambda}^-}}{\sqrt{\lambda^2 +
      \Delta}} \Bigr) dt
  \\
  \intertext{or after integrating by parts,} & = s \int_0^{+\infty}
  t^{s-1} \tr^* \Bigl(\frac{e^{-t Q_{\lambda}^+} - e^{-t
      Q_{\lambda}^-}}{\sqrt{\lambda^2 + \Delta}} \Bigr) dt
  \\
  & = s \int_0^{+\infty} t^{s-1} 2 \sinh (t\lambda) \tr^* \Bigl(
  \frac{e^{-t \sqrt{\lambda^2 + \Delta}}}{\sqrt{\lambda^2 + \Delta}}
  \Bigr) dt \,,
\end{align*}
or after again integrating by parts,
\begin{equation}
  \label{eq:34}
  \partial_\lambda F^+ (\lambda, s) + \partial_\lambda F^-(\lambda, s)
  = 2s \int_0^{+\infty} g(\lambda, t ,s) \tr^*(e^{-t \sqrt{\lambda^2 + 
      \Delta}} ) dt
\end{equation}
with
\begin{equation*}
  g(\lambda, t,s) = \int_0^t u^{s-1} \sinh (u \lambda) du \,. 
\end{equation*}

We therefore need to study the residue at $s=0$ of the integral
expression in \eqref{eq:34}. First $g$ is easily expanded as
\begin{align}
  g(\lambda, t,s) &= \int_0^t u^{s-1} \sum_{p \geq 0}
  \frac{\lambda^{2p+1} u^{2p+1}}{(2p+1)!} du \nonumber\\
  &= \sum_{p\geq 0} \frac{\lambda^{2p+1} t^{2p+1+s}}{(2p+1)!\, (2p+s
    +1)} \label{eq:35} \,.
\end{align}
Consider next the Poisson kernel $\dsp \tr^*(e^{-t \sqrt{\lambda^2 +
    \Delta}} )$ in \eqref{eq:34}; the beginning of its asymptotic
expansion as $t \searrow 0$ is related to that of the heat kernel
$\tr(e^{-t (\lambda^2 + \Delta)} )$ as follows. Recall from
e.g.~\cite[Lemma 1.7.4]{Gilkey} that the trace of the heat kernel of a
second-order elliptic Laplacian such as $P = \lambda^2 + \Delta$
develops when $t \searrow 0$ as
\begin{equation*}
  \tr(e^{-t P} ) \sim \sum_{k\geq 0} c_k t^{k - \frac{m}{2}}, 
\end{equation*}
where $m$ is the manifold dimension and the $c_k$ are integrals of
curvature terms.
\begin{prop}
  \label{prop:poisson}
  One has for $P$ and $c_k$ as above, for odd and even dimension $m$,
  \begin{equation}
    \label{eq:36}
    \tr^*(e^{-t P^{1/2} } ) = \sum_{k=0}^{[m/2]} \frac{2^{m-2k}}{\sqrt
      \pi} \Gamma\bigl( \frac{m- 2k+1}{2} \bigr) c_k t^{2 k-m} - 1 + f(t)
  \end{equation}
  where $f(t) \rightarrow 0$ when $t\searrow 0$.
\end{prop}

\begin{proof}
  This is a particular case of \cite[Theorem 3.1]{BM}. Indeed B\"ar
  and Moroianu gave there the full development of such kernels on the
  diagonal. Higher order terms in the development of the Poisson
  kernel are more involved in odd dimension since they contain log and
  even non-local coefficients.
  
  Here is an alternative proof of the partial development we need. The
  classical Laplace transform $\dsp \mathcal{L}(t^{-1/2}e^{-1/t}) =
  \sqrt\pi p^{-1/2} e^{-2 p^{1/2}}$ leads to the subordination formula
  $$
  e^{-t P^{1/2}} = \pi^{-1/2} \int_0^{+\infty} e^{-u} u^{-1/2}
  e^{-t^2 P/4u} du\,
  $$
  between Poisson and heat kernels. Therefore summing at the trace
  level,
  \begin{align*}
    \tr(e^{-t P^{1/2}}) & = \pi^{-1/2} \int_0^{+\infty} e^{-u}
    u^{-1/2}
    \tr(e^{-t^2 P/4u})  du \\
    & = \pi^{-1/2} \int_0^{+\infty} e^{-u} u^{-1/2} \bigl( \sum_{k =
      0}^{[m/2]} c_k t^{2k-m}(4u)^{m/2 - k} + B(t^2/4u )\bigr)
    du \\
    & = \sum_{k=0}^{[m/2]} \frac{2^{m-2k}}{\sqrt \pi} \Gamma\bigl(
    \frac{m- 2k+1}{2} \bigr) c_k t^{2 k-m} + \pi^{-1/2}
    \int_0^{+\infty} e^{-u} u^{-1/2} B(\frac{t^2}{4u}) du
  \end{align*}
  with $B(t^2/4u)$ bounded and $B(v) \rightarrow 0$ when $v \searrow
  0$. This gives \eqref{eq:36} by dominated convergence and the remark
  that
  $$
  \tr^*(e^{-tP^{1/2}}) = \tr (e^{-tP^{1/2}}) - e^{-t \lambda} = \tr
  (e^{-tP^{1/2}}) -1 + o(1).
  $$
\end{proof}

We can now complete the proof of Lemma \ref{lemma:Q}. We split
\eqref{eq:34} into
\begin{equation*}
  \partial_\lambda F^+ (\lambda, s) + \partial_\lambda
  F^-(\lambda, s) 
  = 2s \Bigl( \int_0^1 + \int_1^{+\infty} \Bigr) g(\lambda, t ,s)
  \tr^*(e^{-t P^{1/2} 
  }) dt \,.
\end{equation*}
By \eqref{eq:35} the second integral here is meromorphic with simple
poles at $s = -2n -1$ for $n \in\N$; in particular it is regular at
$s=0$. Set
$$
c'_k=\frac{2^{m-2k}}{\sqrt \pi} \Gamma\Bigl(\frac{m- 2k+1}{2}\Bigr)
c_k \,,
$$
so that by \eqref{eq:36}
\begin{align*}
  \int_0^1 g(\lambda, u, s) \tr^*(e^{-uP^{1/2}}) du & = \int_0^1
  \sum_{0\leq k\leq [m/2]} \sum_{p \geq 0} \frac{\lambda^{2p+1}
    u^{2p+1+s}}{(2p+1)! (2p+1+s)} c'_k u^{2k-m} du \\
  & \quad + \int_0^1 g(\lambda, u, s) (f(u)-1) du \\
  & = \sum_{0\leq k\leq [m/2]} \sum_{p \geq 0} \frac{c'_k
    \lambda^{2p+1}}{(2p+1)! (2p+1+s)(2p+2+2k-m +s)} \\
  & \quad \quad + \textnormal{holomorphic terms for $\Re(s)>-1$}\,.
 \end{align*}
 This expression has no pole at $s=0$ if $m$ is odd, giving
 \eqref{eq:33} and hence Lemma~\ref{lemma:Q}.
\end{proof}

\smallskip

We now prove Theorem~\ref{thm:RS}. First
Proposition~\ref{prop:comparison-torsions}, Lemma~\ref{lemma:Q} and
\eqref{eq:17} show that
$$
T_C(\rho) = \exp(\kappa'_C(0)/2) = \exp(\kappa'_{RS}(0)/2) =
(T_{RS}(\rho))^{-1}\,.
$$

The equality of Ray--Singer metrics now comes from \eqref{eq:18} and
\eqref{eq:19}, using the equality of $L^2$ metrics on $H^*(M,\rho)$
when $H^*(M,\rho)$ is represented by harmonic forms in the de Rham and
contact complexes. Indeed these latter two notions coincide on CR
Seifert manifolds because of vanishing Tanaka--Webster torsion; see
\cite[Proposition 12]{Rumin94}.

\section{The torsion function of CR Seifert manifolds and its
  dynamical aspects}
\label{sec:torsion-function-seifert}

As an illustration of our viewpoint on analytic torsion, we first show
how to compute it on any CR Seifert $3$-manifold $M$ equipped with a
unitary representation $\rho : \pi_1(M) \rightarrow U(N)$.

\smallskip

As we will be only concerned with the contact torsion function
$\kappa_C$ in the sequel, we will denote it by $\kappa$ for brevity.

Surprisingly, on CR Seifert manifolds, we will see that the
\emph{whole} contact torsion function $\kappa$, not only $\kappa'(0)$,
is expressible using topological data and combinations of
Riemann--Hurwitz zeta functions, parametrised by dynamical properties
of $\rho$ with respect to the circle action on $M$. This leads in
particular to a Lefschetz-type formula for the torsion; see
Theorem~\ref{thm:Lefschetz-torsion}. This extends a result obtained by
Fried \cite{Fried} in the acyclic case using topological methods.

In fact it turns out that our spectral torsion function $\kappa(s)$
may also be seen as a purely dynamical zeta function, constructed from
holonomies along all closed orbits of the Reeb field $T$ and its
length spectrum. This will be shown in \S\S
\ref{sec:dynam-aspects-cont} and \ref{sec:heat-kernel-as}.

\subsection{Torsion function $\kappa$ and the Riemann--Roch--Kawasaki
  formula}
\label{sec:torsion-riemann-roch}

We first relate $\kappa$ to basic holomorphic data.

\smallskip

Let $\overline{V}$ be the conjugate complex vector space to $V$,
i.e.~the same underlying real space with the opposite complex
structure, and set
$$
W = V \oplus \overline{V} \,.
$$
Consider the $\overline{\partial}_W$ complex
\begin{equation}
  \label{eq:37}
  \overline{\partial}_W : \Omega^0 W = \Gamma(M, W) \longrightarrow
  \Omega^{0,1}  W = \Omega^{0,1} H \otimes W = \Gamma(M, \Lambda^{0,1}
  H^* \otimes W )
\end{equation}
given on the CR (holomorphic) bundle $W$ and let
\begin{equation}
  \label{eq:38}
  \mathcal{H}^0_W = \ker \overline{\partial}_W \quad \mathrm{and} \quad
  \mathcal{H}^1_W = \ker \overline{\partial}_W^*
\end{equation}
denote its cohomology. Using $\overline{\Box}_{V_\rho} - \Box_{V_\rho}
= iT$, see e.g.~\cite[(57)]{BHR}, one gets
\begin{equation*}
  \spec (-|T|\mid \mathcal{H}^0_V)= \spec (iT \mid \mathcal{H}^0_W) \,,
\end{equation*}
while, using Hodge $*$ duality,
\begin{equation*}
  \spec (-JT \mid \mathcal{H}^2_V) = \spec (iT \mid \mathcal{H}^1_W).
\end{equation*}
Then the spectral decomposition \eqref{eq:25} reads as follows.
\begin{prop}
  \label{prop:spec_P} The spectrum of $P= D* + \delta_H d_H$ splits
  as
  \begin{displaymath}
    \begin{split}
      \spec^* (P) & = \spec^*(\Delta_H) \ \cup \ \spec^*(- \Delta_H) \\
      & \quad \quad \bigcup \ \spec^*(iT \mid \mathcal{H}^1_W)
      \setminus \spec^* (iT \mid \mathcal{H}^0_W) \,,
    \end{split}
  \end{displaymath}
  where $W = V \oplus \overline{V}$ and $\mathcal{H}^*_W$ is the
  cohomology of $\overline{\partial}_W$ as in
  \eqref{eq:37}--\eqref{eq:38}.
\end{prop}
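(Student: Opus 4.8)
The plan is to read this proposition off from the spectral decomposition \eqref{eq:25} by rewriting its last two summands through the two identities stated immediately above the statement, namely $\spec^*(-|T|\mid\mathcal{H}^0_V)=\spec^*(iT\mid\mathcal{H}^0_W)$ and $\spec^*(-JT\mid\mathcal{H}^2_V)=\spec^*(iT\mid\mathcal{H}^1_W)$, both understood as equalities of multisets. Granting these, one simply substitutes them into \eqref{eq:25}; the summands $\spec^*(\Delta_H)\cup\spec^*(-\Delta_H)$ are carried over untouched. All the real content therefore sits in the two identities, and I would argue them as follows.

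For the first identity I would start from the Kohn-Laplacian commutation relation $\overline{\Box}_{V_\rho}-\Box_{V_\rho}=iT$ of \cite[(57)]{BHR}. Since $\Box_V$ and $\overline{\Box}_V$ are nonnegative, commute, and have discrete spectrum (from the Seifert analysis of \cite[\S7]{BHR}), decomposing into their joint spectral subspaces gives $\mathcal{H}^0_V=\ker(\overline{\Box}_V\Box_V)=\ker\partial_V+\ker\overline{\partial}_V$, the two pieces meeting only in the parallel sections $H^0(M,\rho)$, where $iT=0$. On $\ker\overline{\partial}_V$ one has $\Box_V=-iT\ge 0$, hence $iT\le 0$ and $-|T|=iT$; on $\ker\partial_V$ one has $\overline{\Box}_V=iT\ge 0$, hence $-|T|=-iT$. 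On the other side, $\mathcal{H}^0_W=\ker\overline{\partial}_W$ for $W=V\oplus\overline V$ splits as $\ker\overline{\partial}_V\oplus\overline{\ker\partial_V}$, and complex conjugation on the $\overline V$-summand reverses the sign of the $iT$-eigenvalue. Matching the two pictures eigenvalue by eigenvalue gives the identity; the parallel sections cause no trouble since $\spec^*$ discards the zero eigenvalue on both sides.

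For the second identity I would use Hodge $*$ duality for the calibrated metric $g=d\theta(\cdot,J\cdot)+\theta^2$: in dimension $3$ the operator $*$ sends the vertical $V$-valued $2$-forms $\theta\wedge\beta$ appearing in $\mathcal{H}^2_V$ to horizontal $(0,1)$-type data, intertwining the vertical operator $-JT$ with $iT$, and --- since $\rho$ is unitary, so $\overline V\cong V^*$ --- this realises $\mathcal{H}^2_V$ as $\mathcal{H}^1_W=\ker\overline{\partial}_W^*$ in an $iT$-equivariant manner. This is the Serre/Kodaira-type identification already made in \cite[\S7]{BHR}, which I would cite rather than reprove.

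The main obstacle is bookkeeping rather than substance: one must propagate multiplicities correctly through the set difference $\setminus$ in \eqref{eq:25} (so the identities truly have to hold with multiplicity), avoid double-counting the $iT=0$ sections when passing to $W=V\oplus\overline V$, and fix the sign conventions in $\overline{\Box}_V-\Box_V=\pm iT$ and in the $*$-intertwining so that the two sides line up. That is precisely where a miscount would slip in.
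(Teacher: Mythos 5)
Your proposal is correct and follows essentially the same route as the paper: the Proposition is obtained by substituting into \eqref{eq:25} the two spectral identities stated just above it, and your justifications of those identities (the first via $\overline{\Box}_{V}-\Box_{V}=iT$ and the sign/conjugation analysis on $\ker\partial_V$, $\ker\overline{\partial}_V$; the second via Hodge $*$ duality and the identification of $\mathcal{H}^2_V$ with $\mathcal{H}^1_W$) are the same mechanisms the paper invokes, only spelled out in more detail.
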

\begin{rem}
  Compared to the trivial representation case treated in
  \cite[(68)]{BHR}, the only change here is the tensorisation by $W$.
\end{rem}
Now using the $\overline{\partial}_W$ complex, equation \eqref{eq:26}
yields
\begin{equation*}
  \spec^*(\Delta_1)  = 2 \times \spec^*(\Delta_H^2)
  \bigcup \spec^* (-T^2 \mid \mathcal{H}^1_W) \setminus \spec^*
  (-T^2\mid \mathcal{H}^0_W)\,,
\end{equation*}  
and \eqref{eq:27} becomes
\begin{equation}
  \label{eq:39}
  \begin{aligned}
    \kappa(s) & = 2 \zeta(\Delta_H^2)(s) - \zeta(\Delta_1)(s) \\
    & = \zeta^*(-T^2 \mid \mathcal{H}^1_W)(s) - \zeta^*(-T^2 \mid
    \mathcal{H}^0_W)(s) + \kappa(M, \rho)\,.
  \end{aligned}
\end{equation}
This Lefschetz-type formula for $\kappa$ can be seen more
topologically. Indeed, Fourier decompose each $V^x$ and let
$$
W = \bigoplus_{\lambda \in \spec(iT)} W_\lambda \quad \mathrm{with
}\quad W_\lambda = V_{\lambda } \oplus \overline{V_{ \lambda}}\,.
$$
Then using the holomorphic genus
$$
\chi_{\overline{\partial}}(W_{\lambda}) = \dim
\mathcal{H}^1_{W_\lambda} - \dim \mathcal{H}^0_{W_\lambda},
$$
the torsion function also reads as the Dirichlet series
\begin{equation}
  \label{eq:40}
  \kappa (s) = \sum_{\lambda \in \spec^* (iT)}
  \frac{\chi_{\overline{\partial}} (W_\lambda)}{\lambda^{2s}} +
  \kappa(M, \rho),
\end{equation}
where, from \eqref{eq:24}, $\spec (iT)$ splits into copies of $-(\Z
+x)$ on each sub-representation $V^x$ of $V$, on which $\rho(f) =
e^{2i\pi x}$.
\begin{rem}
  \label{rem:eta-torsion}
  For comparison, the eta function of $P = D* + \delta_H d_H$ twisted
  by $\rho$ may be expressed using Proposition \ref{prop:spec_P} in a
  similar manner. One gets
  \begin{equation}
    \label{eq:41}
    \eta_\rho(P) (s) = \sum_{\lambda \in \spec^* (iT)} \mathrm{sign}(\lambda)
    \frac{ \chi_{\overline{\partial}}
      (V_{\lambda}) - \chi_{\overline{\partial}}
      (\overline{V_{\lambda}})}{|\lambda|^s} \,, 
  \end{equation}
  which is strikingly the `odd version' of the formula \eqref{eq:40}
  for the torsion function $\kappa$. Note that by \cite[Theorem
  8.8]{BHR}, $\eta_\rho (P)(0)$ identifies with $\eta_0(M, \rho)$, the
  diabatic limit of the Riemannian eta invariant with value in $\rho$,
  i.e., the constant term in the development of $\eta(M,
  g_\varepsilon, \rho)$ for the diabatic metrics (which we also
  consider in the present paper) $g_\varepsilon = \varepsilon^{-1 } d
  \theta + \varepsilon^{-2} \theta^2$ .
\end{rem}

In order to express the series \eqref{eq:40} using the
Riemann--Roch--Kawasaki formula, we need first to see the bundles
$V_\lambda$, a priori defined over $M$, as $V$-bundles over the
orbifold $\Sigma$, and compute their degrees and orbifold exponents.

\smallskip

Recall that we consider in \eqref{eq:24} the circle action given by
\begin{displaymath}
  \psi_t = e^{-itx}\varphi_t= e^{int}\quad \mathrm{on} \quad
  V_\lambda= V^x_n \,,
\end{displaymath} 
where $\varphi_t$ acts by parallel transport along $T$ in the flat
$V$. Now $e^{it}$ is a local coordinate on the fibre of the circle
$V$-bundle $M = S(L) \rightarrow \Sigma$, hence $e^{int}$ is a
coordinate for $S(L^n)$. Thus we have
\begin{equation}
  \label{eq:42}
  V_{\lambda} =  S(L^{-n}) \otimes V_0^x\,.
\end{equation}
Sections of $V^x_0$ are invariant by the circle action $\psi_t$ and
satisfy $\varphi_t = e^{itx}$. Then one can see $V^x_0$ as defined
over $\Sigma$, but equipped with the non-flat connection
$$
\nabla_x^\Sigma = \nabla_{\rho}^\mathrm{flat} - ix \theta\,,
$$
for which sections $s$ of $V_0^x$ are parallel along $T$.

Therefore, as seen from $\Sigma$, $(V_0^x, \nabla^\Sigma)$ has
curvature $ \Omega = -i x d \theta $ and rational degree (see
\cite{BHR, FS, Nicolaescu})
\begin{align*}
  d(V_0^x) & = \frac{i}{2\pi} \int_\Sigma \tr_{V_0^x}(- i x d
  \theta)   \\
  & = - \dim( V^x) x d(L)\,,
\end{align*}
because $\Omega_L= i d \theta$. Recall that by
Proposition~\ref{prop:pi1}, $d(L) = b + \sum_i
\frac{\beta_i}{\alpha_i}$. Hence by \eqref{eq:42} we get
\begin{equation}
  \label{eq:43}
  d(V_\lambda) = \dim (V^x) (-n d(L) -x d(L)) = \dim (V^x)
  \lambda d(L).
\end{equation}
By conjugation we have also
\begin{displaymath}
  d(\overline{V_{\lambda}}) =  - \dim (V^x)
  \lambda d(L) =  - d(V_{\lambda})\,,
\end{displaymath}
so that finally
$$
d (W_\lambda) = d(\overline{V_{\rho, \lambda}}) + d(V_{\rho,
  \lambda}) = 0 \,,
$$
as expected for this smooth part due to the real structure on
$W_\lambda= V_\lambda \oplus \overline{V_\lambda}$; see \cite[\S
14]{Milnor}.  We did the above computation for completeness, as the
degree of $V_\lambda$ is needed to study the eta function given in
Remark~\ref{rem:eta-torsion}.

\medskip

We determine now the local action on $V_\lambda$ of $\Gamma_i = \Z/
\alpha_i \Z$ around an orbifold point $x_i \in \Sigma$. Recall that
locally over $x_i$, $M= S(L)$ is the quotient of $\C \times S^1$ by
the action of $\Gamma_i$ generated by
$$
g . (z_1, z_2) = (e^{2i\pi / \alpha_i} z_1, e^{2i\pi \beta_i/
  \alpha_i} z_2)\,.
$$
Since $z_2= e^{it}$ here, we get the local action on $V_0^x$
\begin{align*}
  g . (z_1, z_V) = (e^{2i\pi / \alpha_i} z_1, \psi( 2 \pi\beta_i/
  \alpha_i) z_V)\,.
\end{align*}
Let $f_i = \varphi([0, 2\pi/ \alpha_i])$ be the exceptional closed
primitive orbit over $x_i$. We have
\begin{align}
  \psi(2 \pi \beta_i/ \alpha_i) & = e^{-2i \pi \beta_i x/\alpha_i}
  \varphi(2 \pi \beta_i /\alpha_i)  \nonumber \\
  & = e^{-i 2\pi \beta_i x/\alpha_i} \rho(f_i)^{\beta_i}.
  \label{eq:44}
\end{align}
As $f_i^{\alpha_i} =f$ in $\pi_1(M)$, $\rho(f_i)^{\alpha_i} = \rho(f)
$ and the spectra of $\rho(f_i)$ satisfy
\begin{equation}
  \label{eq:45}
  \left\{
    \begin{gathered}
      \spec \rho(f_i) = \{e^{2 i\pi x_{i,j}} \mid 1 \leq j \leq \dim
      V_0^x \} \\
      \mathrm{with}\ x_{i,j} = \frac{x + k_{i,j}}{\alpha_i} \in [0,1)\ 
      \mathrm{and}\ k_{i,j} \in \Z \,.
    \end{gathered}
  \right.
\end{equation}
Hence \eqref{eq:44} means that $\mathrm{spec}(g) = \{e^{2i\pi k_{i,j}
  \beta_i/\alpha_i}\}$ on the fibres of $V_0^x$ around $x_i$, and, by
tensorisation with $L^{-n}$ in \eqref{eq:42}, the isotropy exponents
of the action on $V_\lambda$ are all the couples $(\alpha_i, (- n +
k_{i,j}) \beta_i \mod \alpha_i)$.
% with also $(- n + k_{i,j}) \beta_i = (\lambda + x_{i,j} \alpha_i)
% \beta_i$.

\medskip

Now the Riemann--Roch--Kawasaki formula (see \cite{FS,Nicolaescu,BHR})
states that
\begin{align}
  \chi_{\overline\partial} (W_\lambda) & = \dim (W_\lambda)(1-g) +
  \mathrm{deg}(W_\lambda) - \sum_{i,j}
  \Bigl\{\frac{\beta_i(W_\lambda)}{\alpha_i(W_\lambda)}\Bigr\} \nonumber \\
  & = \dim (V^x) \chi(\widetilde \Sigma) - \sum_{i,j} \biggl\{\frac{(-
    n + k_{i,j}) \beta_i}{\alpha_i} \biggr\} + \biggl\{\frac{(n -
    k_{i,j}) \beta_i}{\alpha_i} \biggr\}\, ,
  \label{eq:46}
\end{align}
where $\{a\} = a - [a] \in [0,1)$ is the fractional part of $a$ and
$\chi (\widetilde \Sigma) = 2 -2g$ is the Euler characteristic of the
smooth surface $\widetilde \Sigma$ associated to the orbifold
$\Sigma$; see e.g.~\cite{Nicolaescu}.  Observe that \eqref{eq:46} does
give integers since $\{a\}+ \{-a\}$ is $0$ when $a\in\Z$ and $1$
otherwise. Recall also that, to ensure smoothness of the $V$-bundle
$M= S(L)$, the numbers $\alpha_i$ and $\beta_i$ are assumed relatively
prime, thus giving a free action of $\Z / \alpha_i \Z$ at orbifold
points.  Hence the fractional part in \eqref{eq:46} simplifies using
\begin{equation}
  \label{eq:47}
  \delta(n,i,j) =
  \begin{cases}
    1\quad & \mathrm{if} \quad n - k_{i,j} \in \alpha_i \Z \\
    0 \quad & \mathrm{otherwise} \,.
  \end{cases}
\end{equation}
Then we have
\begin{equation}
  \label{eq:48}
  \chi_{\overline\partial} (W_\lambda) = \dim (V^x)
  \chi(\Sigma^*) + \sum_{i,j} \delta(n,i,j) \,,
\end{equation}
where
$$
\chi(\Sigma^*) = 2-2g - |I| \,
$$
is the Euler characteristic of the punctured surface $\dsp \Sigma^*
= \Sigma \setminus \cup_I \{x_i\}$ at the $|I|$ orbifold points.

\medskip

For $a \in ]0,1]$ let $\dsp \zeta(s,a) = \sum_{n \in \N}
\frac{1}{(n+a)^s}$ be Hurwitz zeta function. We can now express
$\kappa$ as a combination of such functions. This is the first step
towards the identification of the torsion function as a dynamical zeta
function given in \S\ref{sec:dynam-aspects-cont}.
\begin{thm}
  \label{thm:kappa_x}
  Split $V$ into irreducible $V^x$; then the torsion function
  spectrally decomposes as
  \begin{displaymath}
    \kappa(s) = \sum_{V^x} \kappa_x (s)
  \end{displaymath}
  such that :
  
  $\bullet$ On $V^x$ with $x \in \, ]0,1[$, i.e.~$\rho(f) = e^{2i\pi
    x} \not= \mathrm{Id}$, we have
  \begin{equation}
    \label{eq:49}
    \begin{split}
      \kappa_x (s) = \dim (V^x) \chi(\Sigma^*) \bigl(\zeta(2s,x)
      +\zeta(2s, 1-x) \bigr) \\
      + \sum_{i,j} \frac{1}{\alpha_i^{2s}} \bigl(\zeta(2s, x_{i,j}) +
      \zeta(2s, 1-x_{i,j}) \bigr) \,.
    \end{split}
  \end{equation}
  
  $\bullet$ On $V^0 = \ker (\mathrm{Id} - \rho(f))$ let $V^{0,i} =
  \ker (\mathrm{Id} - \rho(f_i))$; then we have
  \begin{equation}
    \label{eq:50}
    \begin{split}
      \kappa_0 (s) = \kappa(M, \rho) (2 \zeta(2s) + 1) + 2\zeta (2s)
      \sum_i \dim (V^{0,i})\bigl(\alpha_i^{-2s } -1 \bigr)  \\
      + \sum_{i,j \, |\, x_{i,j} \not=0} \frac{1}{\alpha_i^{2s}}
      \bigl(\zeta(2s, x_{i,j}) + \zeta(2s, 1-x_{i,j}) \bigr) \,.
    \end{split}
  \end{equation}
\end{thm}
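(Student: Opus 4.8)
The plan is to push the spectral decomposition \eqref{eq:40} through the isotypic splitting of $V$ and evaluate the resulting Dirichlet series orbifold point by orbifold point. Since the circle action $\psi_t$ preserves the full pseudohermitian structure $(H,\theta,J)$, every Laplacian entering $\kappa$, together with $iT$ and the operators on $W=V\oplus\overline V$, commutes with $\psi_t$; hence all the spectral spaces in \eqref{eq:39}--\eqref{eq:40} respect the decomposition $V=\bigoplus_{V^x}V^x$ into irreducibles with $\rho(f)=e^{2i\pi x}$, and $\kappa(s)=\sum_{V^x}\kappa_x(s)$ with
\[
\kappa_x(s)=\sum_{\lambda\in\spec^*(iT\mid V^x)}\frac{\chi_{\overline\partial}(W_\lambda)}{\lambda^{2s}}+\kappa(M,V^x),\qquad \kappa(M,V^x)=2\dim H^0(M,V^x)-\dim H^1(M,V^x).
\]
By \eqref{eq:24} the spectrum of $iT$ on $V^x$ is $-(\Z+x)$, and by \eqref{eq:48} the coefficient attached to $\lambda=-(n+x)$ is $\chi_{\overline\partial}(W_{-(n+x)})=\dim(V^x)\chi(\Sigma^*)+\sum_{i,j}\delta(n,i,j)$, where $\delta(n,i,j)=1$ exactly when $n\equiv k_{i,j}\ (\mathrm{mod}\ \alpha_i)$.

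Take first $x\in\,]0,1[$. Then $\lambda=-(n+x)$ never vanishes, so no term is omitted; moreover $f$ acts on $V^x$ without fixed vectors, so $H^*(M,V^x)=0$ (standard for a Seifert fibration since $f$ is central, whether $\pi_1(M)$ is infinite or finite) and $\kappa(M,V^x)=0$. Summing $\sum_{n\in\Z}\chi_{\overline\partial}(W_{-(n+x)})(n+x)^{-2s}$: splitting each $n$-sum into $n\geq0$ and $n\leq-1$ turns the constant term into $\dim(V^x)\chi(\Sigma^*)\bigl(\zeta(2s,x)+\zeta(2s,1-x)\bigr)$, while collecting, for each $(i,j)$, the progression $n\equiv k_{i,j}\ (\mathrm{mod}\ \alpha_i)$ and using $k_{i,j}+x=\alpha_i x_{i,j}$ (so $n+x=\alpha_i(m+x_{i,j})$) yields $\alpha_i^{-2s}\bigl(\zeta(2s,x_{i,j})+\zeta(2s,1-x_{i,j})\bigr)$, all $x_{i,j}$ lying in $]0,1[$ because $x\notin\Z$. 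This is \eqref{eq:49}.

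For $x=0$ the same computation applies, but now $\lambda=-n$ vanishes at $n=0$, so that term is dropped; correspondingly, for each $(i,j)$ with $x_{i,j}=0$ — which by \eqref{eq:47} are exactly the slots of $V^{0,i}=\ker(\mathrm{Id}-\rho(f_i))\subseteq V^0$ — the progression $n\equiv0\ (\mathrm{mod}\ \alpha_i)$ loses its $n=0$ member, contributing $\alpha_i^{-2s}\,2\zeta(2s)$ instead. Running the same Hurwitz-zeta bookkeeping (with $\sum_{n\neq0}n^{-2s}=2\zeta(2s)$) and using $\kappa(M,V^0)=\kappa(M,\rho)$ (the other components being acyclic) gives
\[
\kappa_0(s)=2\zeta(2s)\dim(V^0)\chi(\Sigma^*)+2\zeta(2s)\sum_i\dim(V^{0,i})\alpha_i^{-2s}+\!\!\sum_{i,j\,:\,x_{i,j}\neq0}\!\!\alpha_i^{-2s}\bigl(\zeta(2s,x_{i,j})+\zeta(2s,1-x_{i,j})\bigr)+\kappa(M,\rho).
\]
Replacing $\dim(V^0)\chi(\Sigma^*)$ by $\kappa(M,\rho)-\sum_i\dim(V^{0,i})$ in the first term rearranges this into exactly the form \eqref{eq:50}, with the factors $2\zeta(2s)+1$ and $\alpha_i^{-2s}-1$ emerging.

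Hence the whole theorem reduces to the single identity $\kappa(M,\rho)=\dim(V^0)\chi(\Sigma^*)+\sum_i\dim(V^{0,i})$, which is precisely \eqref{eq:48} at $\lambda=0$, i.e.\ $\kappa(M,\rho)=\chi_{\overline\partial}(W_0)$ — a Lefschetz/index statement for the $\overline\partial_W$ complex, and the real obstacle here. I would prove it through the Hodge-theoretic identifications underlying \eqref{eq:25}--\eqref{eq:27}: the $T$-invariant CR, resp.\ anti-CR, sections of $V$ are exactly the $\nabla_\rho$-parallel ones, which gives $\dim\mathcal{H}^0_{W_0}=2\dim H^0(M,\rho)$, while the $*$-dual computation identifies the $T$-invariant part of $\ker\overline\partial_W^*$ with the harmonic $1$-forms of the contact complex, so $\dim\mathcal{H}^1_{W_0}=\dim H^1(M,\rho)$; equivalently one may check directly, from the Wang exact sequence of the Seifert fibration (where $d(L)\neq0$ forces the Euler class to be non-torsion), that $2\dim H^0(M,\rho)-\dim H^1(M,\rho)$ equals the orbifold Euler-characteristic expression on the right. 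Everything else is routine manipulation of Hurwitz zeta functions.
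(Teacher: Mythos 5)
Your proposal follows the paper's argument essentially verbatim: you split the Dirichlet series \eqref{eq:40} over the isotypic components $V^x$ and over the arithmetic progressions $n\equiv k_{i,j}\ (\mathrm{mod}\ \alpha_i)$ to produce the Hurwitz zeta combinations, and evaluate the cohomological term via the Kawasaki formula \eqref{eq:48} at $\lambda=0$. The identity you isolate as ``the real obstacle'' is exactly \eqref{eq:52}, and the paper proves it precisely as in your first sketch, by invoking \cite[Proposition 12]{Rumin94} (contact-harmonic forms are $T$-invariant and holomorphic because the Tanaka--Webster torsion vanishes on a CR Seifert manifold), so the Wang/Gysin alternative you mention is unnecessary.
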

This relates the torsion function to dynamical properties of the
circle action here.  Indeed apart from the cohomological term
$\kappa(M,\rho)$, the expression is clearly built on the holonomy
properties of $\rho$ along the various closed primitive orbits of the
flow: the generic orbit $f$ of the action over $\Sigma^*$, and
associated holonomy $\rho(f) = e^{2i \pi x}$ on $V^x$, and the
exceptional orbits $f_i$ of holonomy $\rho(f_i)= \{e^{2 i \pi
  x_{i,j}}\}$.

\begin{proof}

  $\bullet$ We compute first the contribution of $V^x$ for $x\not=0$,
  i.e.~when $\rho(f) = e^{2 i\pi x} \not= \mathrm{Id}$.  Here $iT =
  \lambda = -n-x \not=0$ always, and by \eqref{eq:40} and
  \eqref{eq:48}
  \begin{align*}
    \kappa_x (s) & = \dim (V^x) \chi(\Sigma^*) \sum_{n \in \Z}
    \frac{1}{|n+x|^{2s}} + \sum_{i,j} \sum_{k \in \Z} \frac{1}{|k
      \alpha_i + k_{i,j} + x|^{2s}} \\
    & = \dim (V^x) \chi(\Sigma^*) \Bigl(\sum_{n \geq 0}
    \frac{1}{|n+x|^{2s}} + \sum_{n > 0
    } \frac{1}{|-n+x|^{2s}}\Bigr) \\
    & \quad \quad + \sum_{i,j} \frac{1}{\alpha_i^{2s}} \sum_{k \in \Z}
    \frac{1}{|k + x_{i,j}|^{2s}},
  \end{align*}
  by \eqref{eq:45}. This leads to \eqref{eq:49}.

  \smallskip
  
  $\bullet$ We compute now $\kappa_0$, including the cohomological
  term $\kappa(M, \rho)$ from \eqref{eq:28} and \eqref{eq:40}, since
  harmonic forms only appear in $\ker (iT) \subset V_{\rho,0}$; see
  e.g.~\cite[Proposition 12]{Rumin94}. We have
  \begin{equation}
    \label{eq:51}
    \kappa_0(s)  = \dim (V^0)\chi(\Sigma^*)  \sum_{n \in
      \Z^* 
    }\frac{1}{|n|^{2s}} 
    + \sum_{i,j} \sum_{n \in\Z^*} \frac{\delta(n,i,j)}{
      |n|^{2s}} + \kappa (M, \rho) \, .
  \end{equation}
  We recall from \eqref{eq:28} that
  $$
  \kappa (M, \rho) = 2 \dim H^0(M, \rho) - \dim H^1(M,\rho)
  $$
  can be computed using contact-harmonic forms on $M$. By
  \cite[Proposition 12]{Rumin94} contact-harmonic forms are both
  holomorphic and $T$-invariant since the Reeb flow preserves $J$
  here, i.e.~Tanaka--Webster torsion vanishes.  Then one gets
  \begin{align}
    \kappa(M, \rho) &= 2 \chi_{\overline \partial} (W_0) \nonumber \\
    & = \dim (V^0) \chi(\Sigma^*) + \sum_{i,j} \delta(0,i,j) \quad
    \mathrm{by}\ \eqref{eq:48}  \nonumber \\
    \kappa(M, \rho) & = \dim (V^0) \chi(\Sigma^*) + \sum_{i} \dim
    (V^{0,i}) \,,
    \label{eq:52}
  \end{align}
  since, by \eqref{eq:45} and \eqref{eq:47}, $\delta(n,i,j)= 1$ if and
  only if $x_{i,j} = 0$.  Then \eqref{eq:51} reads
  \begin{equation*}
    \kappa_0(s) = \kappa(M, \rho) (2 \zeta(2s) + 1)
    + \sum_{i,j} \sum_{n \in\Z^*} \frac{\delta(n,i,j)}{
      |n|^{2s}} - 2 \zeta(2s) \sum_{i} \dim (V^{0,i})\,.
  \end{equation*}
  We observe now that if $x_{i,j} \in \, ]0,1[$,
  \begin{align*}
    \sum_{n \in\Z^*} \frac{\delta(n,i,j)}{ |n|^{2s}} & = \sum_{k \in
      \Z} \frac{1}{|k\alpha_i + k_{i,j}|^2s} \\
    & = \frac{1}{\alpha_i^{2s}} \bigl(\zeta(2s, x_{i,j}) + \zeta(2s,
    1- x_{i,j}) \bigr) \quad \mathrm{by}\ \eqref{eq:45}.
  \end{align*}
  On the other hand for $x_{i,j}=0$,
  $$
  \sum_{n \in \Z^*} \frac{\delta(n,i,j)}{|n|^{2s}} = \sum_{k \in
    \Z^*} \frac{1}{|k\alpha_i|^{2s}} = \frac{2}{\alpha_i^{2s}} \zeta
  (2s)\,,
  $$
  as needed in \eqref{eq:50}.
\end{proof}

The expression of $\kappa_x$ given in Theorem \ref{thm:kappa_x}
vanishes at $s=0$, as it ought to by Corollary \ref{cor:3var}. Here
this follows from the classical result $ \zeta(0,a) = 1/2 -a $; see
\cite[\S 13]{WW} for instance. The following observation will be
useful in the sequel.
\begin{cor}
  \label{cor:residue_kappa}
  The torsion function $\kappa$ has a unique simple pole at $s=1/2$
  with residue
  $$
  \mathrm{Res}_{1/2}(\kappa) = \chi(\Sigma) \dim V ,
  $$
  where $\chi(\Sigma) = 2-2g + \sum_i (\frac{1}{\alpha_i} -1)$
  denotes the rational Euler class of the orbifold $\Sigma$.
\end{cor}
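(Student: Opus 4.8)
The plan is to extract everything directly from the explicit formulae \eqref{eq:49} and \eqref{eq:50} of Theorem~\ref{thm:kappa_x}, combined with the elementary fact that the Hurwitz zeta function $\zeta(s,a)=\sum_{n\geq 0}(n+a)^{-s}$ continues meromorphically to $\C$ with a single simple pole, located at $s=1$ and of residue $1$ independently of $a\in\,]0,1]$; see e.g.~\cite[\S 13]{WW}. Consequently $\zeta(2s,a)$ has a single simple pole at $s=1/2$, with residue $1/2$. Since by Theorem~\ref{thm:kappa_x} the function $\kappa(s)=\sum_{V^x}\kappa_x(s)$ is a \emph{finite} sum of such Hurwitz zeta functions multiplied by the entire factors $s\mapsto \alpha_i^{-2s}$ (and by constants), it follows at once that $\kappa$ is holomorphic on $\C$ apart from an at-worst-simple pole at $s=1/2$; this settles the uniqueness assertion, and it remains to identify the residue.

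First I would compute the contribution of a component $V^x$ with $x\in\,]0,1[$. In \eqref{eq:49} the prefactors $\alpha_i^{-2s}$ are entire, so they may simply be evaluated at $s=1/2$, while each Hurwitz factor $\zeta(2s,\cdot)$ contributes residue $1/2$. Using that the index $j$ ranges over the $\dim V_0^x=\dim V^x$ eigenvalues of $\rho(f_i)$ (compare \eqref{eq:43} and \eqref{eq:45}), this gives
$$
\mathrm{Res}_{1/2}(\kappa_x) = \dim(V^x)\,\chi(\Sigma^*) + \sum_{i,j}\frac{1}{\alpha_i} = \dim(V^x)\Bigl(\chi(\Sigma^*)+\sum_i\frac{1}{\alpha_i}\Bigr).
$$
Since $\chi(\Sigma^*)=2-2g-|I|$ and $\chi(\Sigma)=2-2g+\sum_i(\tfrac{1}{\alpha_i}-1)$, one has $\chi(\Sigma)=\chi(\Sigma^*)+\sum_i\tfrac{1}{\alpha_i}$, so this residue equals $\dim(V^x)\,\chi(\Sigma)$.

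Next I would treat $V^0$ via \eqref{eq:50}. Evaluating entire prefactors at $s=1/2$ as before, the three groups of terms there contribute residues $\kappa(M,\rho)$, $\sum_i\dim(V^{0,i})(\tfrac1{\alpha_i}-1)$, and $\sum_{i,j\,|\,x_{i,j}\neq 0}\tfrac1{\alpha_i}$, respectively. For fixed $i$ exactly $\dim(V^{0,i})$ of the indices $j$ satisfy $x_{i,j}=0$, so the last sum equals $\sum_i\tfrac1{\alpha_i}(\dim V^0-\dim V^{0,i})$; the terms $\tfrac1{\alpha_i}\dim(V^{0,i})$ then cancel, leaving
$$
\mathrm{Res}_{1/2}(\kappa_0)=\kappa(M,\rho)-\sum_i\dim(V^{0,i})+\dim(V^0)\sum_i\frac{1}{\alpha_i}.
$$
Substituting $\kappa(M,\rho)=\dim(V^0)\chi(\Sigma^*)+\sum_i\dim(V^{0,i})$ from \eqref{eq:52} collapses this to $\dim(V^0)\bigl(\chi(\Sigma^*)+\sum_i\tfrac1{\alpha_i}\bigr)=\dim(V^0)\chi(\Sigma)$. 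Summing over all irreducible components then yields $\mathrm{Res}_{1/2}(\kappa)=\bigl(\sum_{V^x}\dim V^x\bigr)\chi(\Sigma)=\dim V\cdot\chi(\Sigma)$, as claimed. The only point requiring genuine care is the combinatorial accounting in the $V^0$ case --- partitioning the index set $\{(i,j)\}$ according to whether $x_{i,j}$ vanishes, and matching it against the value of $\kappa(M,\rho)$ in \eqref{eq:52}; the remaining steps are routine evaluation of residues of Hurwitz zeta functions.
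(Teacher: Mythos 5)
Your proposal is correct and follows essentially the same path as the paper: use Theorem~\ref{thm:kappa_x} together with the standard fact that $\zeta(2s,a)$ has a unique simple pole at $s=1/2$ of residue $1/2$, evaluate entire prefactors at $s=1/2$, and close the $V^0$ computation with \eqref{eq:52}. Your bookkeeping of the index set $\{(i,j)\}$ according to whether $x_{i,j}=0$ is the same step the paper carries out by writing $\sum_i\dim\bigl((V^{0,i})^\bot\bigr)/\alpha_i$, so the two arguments coincide.
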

\begin{proof}
  One knows (see \cite[\S 13]{WW}) that the Hurwitz zeta function
  $\zeta(2s,a)$ has a unique simple pole at $s=1/2$ with residue
  $1/2$.  Then \eqref{eq:49} on $V^x$ yields
  \begin{displaymath}
    \mathrm{Res}_{1/2}(\kappa_x)  = \dim V^x \chi(\Sigma^*) + \dim
    V^x \sum_i \frac{1}{\alpha_i} = \dim V^x \chi(\Sigma)\,.
  \end{displaymath}
  On the other hand by \eqref{eq:50} in $V^0$,
  \begin{align*}
    \mathrm{Res}_{1/2}(\kappa_0) & = \kappa(M,\rho) + \sum_i \dim
    (V^{0,i}) (\frac{1}{\alpha_i} - 1) + \sum_i \dim((V^{0,i})^\bot)
    \frac{1}{\alpha_i} \\
    & = (\kappa(M, \rho) - \sum_i \dim V^{0,i}) + \dim V^0 \sum_i
    \frac{1}{\alpha_i} \\
    & = \dim V^0 \bigl(\chi(\Sigma^*) + \sum_i \frac{1}{\alpha_i}
    \bigr) = \dim V^0 \chi(\Sigma) \,,
  \end{align*}
  by \eqref{eq:52}.
\end{proof}

\medskip

\begin{rem}
  We lastly observe that a similar treatment applies to handle the
  twisted eta function in Remark~\ref{rem:eta-torsion}.  Indeed by the
  Riemann--Roch--Kawasaki formula and \eqref{eq:43} one has
  $$
  \chi_{\overline \partial} (V_\lambda) - \chi_{\overline \partial}
  (\overline{V_\lambda}) = 2 \dim(V_\rho^x) \lambda d(L) + \sum_{i,j}
  \biggl\{\frac{(- n + k_{i,j}) \beta_i}{\alpha_i} \biggr\} -
  \biggl\{\frac{(n - k_{i,j}) \beta_i}{\alpha_i} \biggr\}\, ,
  $$
  and by \eqref{eq:41} the contribution of $V^x$ to eta is
  \begin{align*}
    \eta_x(P)(s) & = 2 \dim(V^x) d(L) \sum_{\lambda \in
      \spec^*(iT)} \frac{1}{|\lambda|^{s-1}} \\
    & \quad + \sum_{i,j} \sum_{\lambda \in \spec^*{iT}} \Bigl(2
    \biggl\{\frac{(- n + k_{i,j}) \beta_i}{\alpha_i} \biggr\} - 1 +
    \delta(n,i,j) \Bigr) \frac{\mathrm{sgn }(\lambda)}{
      |\lambda|^s}\,.
  \end{align*}
  The generic smooth contribution may be written as
  \begin{equation*}
    2 \dim(V^x) d(L) \times
    \begin{cases}
      2 \zeta(s-1) & \mathrm{if}\quad x=0 \\
      \zeta(s-1, x)+ \zeta(s-1, 1- x) &\mathrm{if}\quad x\not= 0\,,
    \end{cases}
  \end{equation*}
  taking value
  \begin{equation*}
    - \dim(V^x) d(L) \bigl( \frac{1}{6} + x(1-x) \bigr)
  \end{equation*}
  at $s=0$; see \cite[\S 13]{WW}.  Following Nicolaescu's work
  \cite{Nicolaescu}, the remaining `periodic' eta term can be handled
  using Dedekind--Rademacher sums and Hurwitz functions; see
  Proposition~1.10 and Lemma~1.11 in \cite{Nicolaescu} for details.
\end{rem}

\subsection{A Lefschetz-type formula for the Ray--Singer metric}
\label{sec:lefsch-type-form}

Using Theorem~\ref{thm:RS} we can now compute the Ray--Singer analytic
torsion $T_{RS} = \exp(- \kappa'(0)/2) = (T_{C})^{-1}$, which gives
the associated Ray--Singer metric on $\det H^*(M, \rho)$,
\begin{equation}
  \label{eq:53}
  \| \quad \|_{RS} = (T_{RS})^{-1}\, |\quad |_{L^2(\Omega^*M)}\,.
\end{equation}

% In this formula things have to be taken over $\R$, so that volumes
% associated to determinant bundles interpret as norms. The spectral
% torsion function we studied in this section was actually $\kappa_C =
% \kappa_\R/2$.

\smallskip

In the acyclic case, i.e.~$H^*(M, \rho) = 0$, Fried~\cite{Fried} has
shown that the Reidemeister--Franz torsion, and thus the analytic
torsion by the works \cite{Cheeger, Muller} of Cheeger and M\"uller,
may be nicely expressed `à la Lefschetz' using determinants associated
to the generic and exceptional holonomies along the primitive orbits
of the circle action. For a general unitary representation $\rho :
\pi_1(M) \rightarrow U(N)$ we obtain:
\begin{thm}
  \label{thm:Lefschetz-torsion}
  Let $\rho(f)^{\top}$ and $\rho(f_i)^{\top}$ denote the restriction
  of these holonomies to respectively $(V^0)^{\bot}$ and
  $(V^{0,i})^\bot$ with
  $$
  V^0 = \ker (\mathrm{Id} - \rho(f)) \quad \mathrm{and}\quad
  V^{0,i} = \ker (\mathrm{Id} - \rho(f_i)) \,.
  $$
  Then Ray--Singer analytic torsion $T_{RS}(M, \rho)=
  \exp(-\kappa'(0)/2)$ is given by
  \begin{equation}
    \label{eq:54}
    T_{RS}(M, \rho) = (2 \pi)^{\kappa(M, \rho)}
    |\mathrm{det}(\mathrm{Id} - \rho(f)^{\top})|^{\chi(\Sigma^*)} 
    \prod_i \frac{|\mathrm{det}(\mathrm{Id} - \rho(f_i)^{\top})|}{
      \alpha_i^{\dim (V^{0,i})}} \,.
  \end{equation} 
\end{thm}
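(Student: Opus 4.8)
The plan is to compute $\kappa'(0)$ directly from the explicit formula for $\kappa(s)$ given in Theorem~\ref{thm:kappa_x}, using the well-known derivative of the Hurwitz zeta function at $s=0$. Recall from e.g.~\cite[\S13]{WW} that $\zeta(0,a) = \frac12 - a$ and, crucially, that $\zeta'(0,a) = \ln\Gamma(a) - \frac12\ln(2\pi)$. Since the torsion function decomposes as $\kappa(s) = \sum_{V^x}\kappa_x(s)$, it suffices to differentiate each $\kappa_x$ at $s=0$; note that the factor $2s$ in the arguments $\zeta(2s,a)$ means $\frac{d}{ds}\zeta(2s,a)|_{s=0} = 2\zeta'(0,a)$, and that factors $\alpha_i^{-2s}$ contribute $-2\ln\alpha_i$ times the value of the accompanying zeta term at $s=0$.

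First I would treat the case $x\in\,]0,1[$, i.e.~$\rho(f)=e^{2i\pi x}\neq\mathrm{Id}$. Differentiating \eqref{eq:49} at $s=0$ and using the reflection-type combination $\zeta'(0,a)+\zeta'(0,1-a) = \ln(\Gamma(a)\Gamma(1-a)) - \ln(2\pi) = \ln\frac{\pi}{\sin\pi a} - \ln(2\pi) = -\ln(2\sin\pi a)$, together with $\zeta(0,a)+\zeta(0,1-a)=0$, one obtains
\begin{equation*}
  \kappa_x'(0) = -2\dim(V^x)\chi(\Sigma^*)\ln(2\sin\pi x) - 2\sum_{i,j}\ln(2\sin\pi x_{i,j})\,.
\end{equation*}
Since $|1 - e^{2i\pi a}| = 2\sin\pi a$ for $a\in\,]0,1[$, summing over the eigenvalues $e^{2i\pi x}$ of $\rho(f)^{\top}$ and $e^{2i\pi x_{i,j}}$ of $\rho(f_i)^{\top}$ turns these sums of logarithms into $\ln|\det(\mathrm{Id}-\rho(f)^{\top})|$ and $\ln|\det(\mathrm{Id}-\rho(f_i)^{\top})|$; the $\alpha_i^{-2s}$ factors contribute nothing here because $\zeta(0,x_{i,j})+\zeta(0,1-x_{i,j})=0$. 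Exponentiating $-\kappa_x'(0)/2$ then yields exactly the $x\neq 0$ contribution to the right-hand side of \eqref{eq:54} (with no $(2\pi)$ and no $\alpha_i$ factors).

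Next I would handle $V^0$, differentiating \eqref{eq:50}. The term $\kappa(M,\rho)(2\zeta(2s)+1)$ gives $2\kappa(M,\rho)\zeta'(0) = -\kappa(M,\rho)\ln(2\pi)$, producing the factor $(2\pi)^{\kappa(M,\rho)}$ after exponentiating $-\kappa'(0)/2$. The term $2\zeta(2s)\sum_i\dim(V^{0,i})(\alpha_i^{-2s}-1)$ requires care: at $s=0$ the bracket $(\alpha_i^{-2s}-1)$ vanishes, so its derivative picks out $2\zeta(0)\cdot(-2\ln\alpha_i) = 2\ln\alpha_i$ per unit, giving $\sum_i\dim(V^{0,i})\ln\alpha_i$ in $\kappa_0'(0)$ and hence the factor $\prod_i\alpha_i^{-\dim(V^{0,i})}$. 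Finally the last sum over $x_{i,j}\neq 0$ again produces $\ln|\det(\mathrm{Id}-\rho(f_i)^{\top})|$ as before, now restricted to $(V^{0,i})^{\bot}$. Assembling $T_{RS}(M,\rho) = \exp(-\sum_{V^x}\kappa_x'(0)/2)$ gives \eqref{eq:54}.

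The main obstacle is bookkeeping rather than conceptual: one must be careful that the "generic" holonomy $\rho(f)^{\top}$ on $(V^0)^{\bot}$ is assembled correctly from the various irreducible pieces $V^x$ with $x\neq 0$ (the eigenvalues of $\rho(f)$ on $V^x$ are $e^{2i\pi x}$ with multiplicity $\dim V^x$), and that on $V^0$ the exceptional holonomies $\rho(f_i)$ may still have eigenvalues $e^{2i\pi x_{i,j}}\neq 1$ even though $\rho(f)=\mathrm{Id}$ there — these are precisely the terms restricted to $(V^{0,i})^{\bot}$ inside $V^0$. One also needs the elementary identity $\prod_{\zeta^{N}=1,\,\zeta\neq 1}(1-\zeta w)$-type factorization implicitly used when passing from per-eigenvalue logarithms to determinants, and the observation that $\zeta(0) = -\tfrac12$ so $2\zeta(0) = -1$ exactly cancels the "$1$" needed to make the $\alpha_i$ powers come out with the right exponent $\dim(V^{0,i})$. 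Once these identifications are in place, the computation is a direct substitution of $\zeta'(0,a) = \ln\Gamma(a) - \tfrac12\ln(2\pi)$.
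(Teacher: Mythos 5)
Your proposal follows essentially the same route as the paper: differentiate the explicit Hurwitz-zeta expression from Theorem~\ref{thm:kappa_x} using Lerch's formula $\zeta'(0,a)=\ln\Gamma(a)-\tfrac12\ln(2\pi)$ together with the Euler reflection formula to get $\zeta'(0,a)+\zeta'(0,1-a)=-\ln|1-e^{2i\pi a}|$, treat the $x\neq0$ and $x=0$ pieces separately, and exponentiate. One small bookkeeping point: in the $V^0$ computation you drop a chain-rule factor of $2$ twice. The derivative of $\kappa(M,\rho)\bigl(2\zeta(2s)+1\bigr)$ at $s=0$ is $4\kappa(M,\rho)\zeta'(0)$, not $2\kappa(M,\rho)\zeta'(0)$, and the contribution of $2\zeta(2s)\sum_i\dim(V^{0,i})(\alpha_i^{-2s}-1)$ to $\kappa_0'(0)$ is $2\sum_i\dim(V^{0,i})\ln\alpha_i$, not $\sum_i\dim(V^{0,i})\ln\alpha_i$ (you even compute $2\ln\alpha_i$ per unit but then drop the $2$ in the sum). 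Both slips self-cancel in the sense that the factors $(2\pi)^{\kappa(M,\rho)}$ and $\prod_i\alpha_i^{-\dim(V^{0,i})}$ you finally state are the correct ones and agree with the paper, but as written your intermediate expressions for $\kappa_0'(0)$ would only give the square roots of these factors. Otherwise the argument is correct and coincides with the paper's proof.
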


\begin{proof}
  By Lerch's formula $\partial_s \zeta(s, x)_{s=0} = \ln \Gamma(x) -
  \frac{1}{2} \ln (2\pi)$, see \cite[\S 13]{WW}, we have
  \begin{align*}
    \partial_s \zeta(0, x) + \partial_s \zeta(0, 1-x) & = \ln \bigl(
    \Gamma(x)\Gamma(1-x)/2\pi \bigr) \\
    & = - \ln (2 \sin(\pi x)) \quad \mathrm{by\ the\ Euler\ 
      reflection\ 
      formula,}\\
    & = - \ln | 1 - e^{2i \pi x}| \,.
  \end{align*}
  Hence by \eqref{eq:49} on $V^x$,
  \begin{equation*}
    -\kappa'_x(0) /2  = \dim(V^x) \chi(\Sigma^*) \ln |1- e^{2i
      \pi x}| + \sum_{i,j} \ln |1 - e^{2i\pi x_{i,j}}|\,,
  \end{equation*}
  which gives the determinant contribution of $V^x$ to \eqref{eq:54}.
  
  By \eqref{eq:50} on $V^0$ and Lerch's formula again, one finds
  \begin{equation*}
    - \kappa'_0(0)/2  = -2\zeta'(0) \kappa(M,\rho) - \sum_i \dim
    (V^{0,i}) \ln
    \alpha_i
     + \sum_{i,j \, | \, x_{i,j} \not=0} \ln |1 - e^{2i\pi
      x_{i,j}}|
  \end{equation*}
  similarly as above. This gives the needed contribution of $V^0$ to
  \eqref{eq:54}.
\end{proof}

As required, formula \eqref{eq:54} coincides with that of Fried
\cite[p.~198]{Fried} for acyclic representations. The only new factor
in our case is the cohomological term $ (2\pi)^{\kappa(M,\rho)} $.
That the full expression for the torsion is `quantised' here is due to
the rigidity of volume in this CR Seifert case. Namely, the size of
$\theta$ is fixed such that the circle action is generated by the Reeb
field $T$ in constant time $2\pi$, hence the volume forms $
\mathrm{dvol} = \theta \wedge d \theta$ on $TM$ and $d \theta$ on $H$
are also fixed. Then when the complex structure $J$ changes on the
base $\Sigma$, together with the calibrated metric $g = d\theta
(\cdot, J \cdot) + \theta^2$, harmonic forms representing $H^*(M,
\rho)$ change, but their length in $\det \mathcal{H}^*(M, \rho)$ does
not. Hence the $L^2$ metric used in \eqref{eq:53} is independent of
$J$, as is the analytic torsion given in \eqref{eq:54}.

\subsection{The contact torsion function as a dynamical zeta function}
\label{sec:dynam-aspects-cont}

In \cite{Fried87, Fried}, Fried proposed to express the torsion using
the following basic dynamical objects.

For each free homotopical class $C$ of periodic orbit of the Reeb
field $T$, let $\ell(C)$ denote its length and $\ind(C)$ its Fuller
index; see \cite{Fuller} or \cite[\S 4]{Fried87} for an account of
these notions.
\begin{prop}[{\cite[Lemma 5.3]{Fried87}, \cite{Fried}}]
  \label{prop:ind}
  The free homotopy classes of closed orbits of $T$ are the following
  :
  \begin{enumerate}
  \item $f^n$ with $n \in \N^*$, of length $2\pi n$ and Fuller index
    $\chi(\Sigma)/n$, where
    $$
    \chi(\Sigma) = 2-2g - \sum_i ( 1- 1/\alpha_i) = \chi(\Sigma^*)
    + \sum_i 1/\alpha_i \,
    $$
    is the rational Euler class of the quotient orbifold $\Sigma =
    M/\langle T \rangle$;
  \item the isolated $f_i^n$ for $n \notin \alpha_i \N$, of length
    $2\pi n /\alpha_i$ and Fuller index $1/ n$.
  \end{enumerate}
\end{prop}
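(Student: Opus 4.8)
The plan is to read off the closed orbits of $T$ and their lengths directly from the Seifert description of $M$ recalled above, and to obtain the Fuller indices by the standard local return-map computation for the isolated orbits together with Fried's index formula for the one non-isolated family.

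First I would enumerate the closed orbits. Since $T$ generates the locally free circle action $\varphi_t$ with $\theta(T)=1$, every orbit of $T$ is a Seifert fibre, hence closed, of length equal to its least period. A generic fibre has period $2\pi$; over a conical point $x_i$ of type $(\alpha_i,\beta_i)$ the isotropy of the action is $\Z/\alpha_i\Z$, so the exceptional fibre $f_i$ has least period, and length, $2\pi/\alpha_i$. For the free homotopy classes I would use the presentation of $\pi_1(M)$ in Proposition~\ref{prop:pi1}: $f$ is central and $f_i^{\alpha_i}=f$, so $f_i^n$ lies in the class of $f^{n/\alpha_i}$ exactly when $\alpha_i\mid n$. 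Thus the distinct free homotopy classes of closed orbits are the $f^n$ ($n\in\N^*$), of length $2\pi n$, and the $f_i^n$ with $n\notin\alpha_i\N$, of length $2\pi n/\alpha_i$, these being pairwise distinct by that presentation (equivalently, distinguished already in $\pi_1(M)/\langle f\rangle$, the orbifold fundamental group of $\Sigma$).

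For the isolated orbits $f_i^n$, $n\notin\alpha_i\N$, I would compute the linearized Poincar\'e return map in the local model $M=(\C\times S^1)/(\Z/\alpha_i\Z)$, with generator acting by $(z_1,z_2)\mapsto(e^{2\pi i/\alpha_i}z_1,e^{2\pi i\beta_i/\alpha_i}z_2)$ and $\varphi_t(z_1,z_2)=(z_1,e^{it}z_2)$. Flowing for time $2\pi/\alpha_i$ and transporting back to the normal slice $\{z_2=1\}$ by the unique deck transformation that restores the fibre coordinate, one finds that the return map of $f_i$ on the normal plane $\C$ is the rotation $z_1\mapsto e^{-2\pi i\beta_i'/\alpha_i}z_1$, where $\beta_i'$ is an inverse of $\beta_i$ modulo $\alpha_i$; hence that of $f_i^n$ is rotation by $-2\pi n\beta_i'/\alpha_i$, which has eigenvalue $1$ if and only if $\alpha_i\mid n$. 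So for $n\notin\alpha_i\N$ the orbit is nondegenerate, its fixed-point index equals $\mathrm{sign}\,\det_{\R}(I-R)=+1$, and since $f_i^n$ wraps the prime orbit $f_i$ exactly $n$ times, its Fuller index is $\frac{1}{n}\cdot(+1)=1/n$.

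Finally, the generic class $f^n$: here the closed orbits of length $2\pi n$ in this class sweep out the whole fibration and are parametrized by the base orbifold $\Sigma$, a degenerate (non-isolated) family whose Fuller index is computed by an orbifold/equivariant Poincar\'e--Hopf count giving $\chi(\Sigma)/n$, with $\chi(\Sigma)=2-2g-\sum_i(1-1/\alpha_i)$ the rational Euler number. This is exactly \cite[Lemma 5.3]{Fried87} (see also \cite{Fried}), which I would quote. The main obstacle is precisely this last step: making rigorous sense of the Fuller index of a degenerate family of periodic orbits lying over an orbifold, and identifying the resulting count with the rational Euler characteristic of $\Sigma$. The orbit bookkeeping and the isolated-orbit indices are routine; the substantive input is Fried's computation, and that is why we cite it rather than reprove it.
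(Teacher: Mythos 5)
The paper states this proposition without proof, citing Fried (\cite[Lemma~5.3]{Fried87}, \cite{Fried}) just as you do for the one substantive step, namely the Fuller index $\chi(\Sigma)/n$ of the degenerate family $f^n$. Your routine computations --- orbit lengths, free homotopy classes via $f_i^{\alpha_i}=f$ and centrality of $f$, the linearized return map $z_1\mapsto e^{-2\pi i n\beta_i'/\alpha_i}z_1$ and its nondegeneracy for $n\notin\alpha_i\N$, and the resulting Poincar\'e index $+1$ and Fuller index $1/n$ for the isolated orbits --- are all correct, so your proposal fills in exactly what the paper leaves implicit and defers to Fried exactly where the paper does.
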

Fried observed that for acyclic unitary representations one has
\begin{equation}
  \label{eq:55}  
  T_{RS}(M, \rho) = \bigl| \exp ( Z_F(0)) \bigr|\,,
\end{equation}
where $Z_F(0)$ stands for the analytic continuation at $s=0$ of the
dynamical function
\begin{equation*}
  Z_F(s) =  - \sum_{C} \ind(C) \tr(\rho(C)) e^{-s \ell (C)}\,.
\end{equation*}
This can be checked directly from the calculation of torsion in
Theorem \ref{thm:Lefschetz-torsion} and Proposition \ref{prop:ind}, as
in \cite[\S 1]{Fried}. Such a link between analytic torsion and flow
dynamics is not coincidental; it has already been observed in many
other geometric situations, see e.g.~\cite{Fried87}. In particular it
holds for the geodesic flow on hyperbolic manifolds, as proved by
Fried in \cite{Fried88}, or more generally on locally symmetric spaces
of non-positive sectional curvature, as proved by Moscovici and
Stanton in \cite{MS}.  These results are rooted in Selberg's trace
formula, expressing heat kernel traces as a sum of traces along closed
geodesics.

In our setting, in view of Theorem \ref{thm:kappa_x}, it is quite
natural to try to express the contact torsion function $\kappa(s)$
itself using the same dynamical data as above.  Indeed the
\emph{whole} spectral function $\kappa$ may be nicely interpreted `à
la Selberg' as a purely dynamical zeta function of the Reeb flow.
\begin{thm}
  \label{thm:torsion-zeta}
  Let
  \begin{displaymath}
    f(s) = \Gamma(s) \cos(\frac{\pi s}{2})\,.
  \end{displaymath}
  For $C$ closed let $\rtr(\rho(C))$ denote the \textrm{real} part of
  the trace of $\rho(C)$ on $V_\rho$. Then
  \begin{equation}
    \label{eq:56}
    f(s) \bigl( \kappa(s/2) - \kappa(M, \rho) \bigr) = \sum_{C} \ind(C) 
    \rtr(\rho(C)) \, \ell(C)^s \,,
  \end{equation}
  where the sum is taken over all free homotopical classes of closed
  orbits of the Reeb flow.
\end{thm}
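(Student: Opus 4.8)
The plan is to evaluate both sides of \eqref{eq:56} in closed form and check they agree, using Proposition~\ref{prop:ind} to unfold the dynamical series, Theorem~\ref{thm:kappa_x} for the torsion function, and the functional equation of the Hurwitz zeta function as the bridge between them. First I would split $V=\bigoplus_x V^x$ into irreducibles, so that $\rho(f)=e^{2i\pi x}\id$ on $V^x$ and $\kappa=\sum_x\kappa_x$. By Proposition~\ref{prop:ind} the closed Reeb orbits are the $f^n$ ($n\in\N^*$, length $2\pi n$, Fuller index $\chi(\Sigma)/n$) and the isolated $f_i^n$ ($\alpha_i\nmid n$, length $2\pi n/\alpha_i$, Fuller index $1/n$). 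Since $\rho$ is unitary, on $V^x$ one has $\rtr(\rho(f)^n)=\dim(V^x)\cos(2\pi nx)$ and, by \eqref{eq:45}, $\rtr(\rho(f_i)^n)=\sum_j\cos(2\pi nx_{i,j})$ with $\alpha_i x_{i,j}\equiv x\pmod{\Z}$; thus the right-hand side of \eqref{eq:56} becomes, on each $V^x$, a combination of the elementary series $\sum_{n\geq1}\cos(2\pi na)\,n^{s-1}$.

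The analytic input I would use is Hurwitz's formula (see \cite{WW,Nicolaescu}), which for $a\in\,]0,1[$ may be rewritten as
\begin{equation*}
  \sum_{n\geq1}\cos(2\pi na)\,n^{s-1}=(2\pi)^{-s}\,\Gamma(s)\cos\bigl(\tfrac{\pi s}{2}\bigr)\bigl(\zeta(s,a)+\zeta(s,1-a)\bigr),
\end{equation*}
together with its limiting case $a\in\Z$, namely the Riemann functional equation $\sum_{n\geq1}n^{s-1}=\zeta(1-s)=2(2\pi)^{-s}\cos(\tfrac{\pi s}{2})\Gamma(s)\zeta(s)$. Here the prefactor $(2\pi)^{-s}$ absorbs the $\ell(C)^s=(2\pi n)^s$ factors and the surviving $\Gamma(s)\cos(\tfrac{\pi s}{2})$ is exactly the normaliser $f(s)$ of the theorem (the companion identity with $\zeta(s,a)-\zeta(s,1-a)$ and $\sin(\tfrac{\pi s}{2})$ being the one underlying the eta function of Remark~\ref{rem:eta-torsion}). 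Feeding the generic orbits $f^n$ into this identity yields $f(s)\dim(V^x)\chi(\Sigma)\bigl(\zeta(s,x)+\zeta(s,1-x)\bigr)$.

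For the exceptional orbits I would write $\sum_{\alpha_i\nmid n}=\sum_{n\geq1}-\sum_{\alpha_i\mid n}$: the first sum gives, after Hurwitz, $f(s)\sum_{i,j}\alpha_i^{-s}(\zeta(s,x_{i,j})+\zeta(s,1-x_{i,j}))$, while in the second one $\cos(2\pi\alpha_i m\,x_{i,j})=\cos(2\pi mx)$, so it collapses to a generic-type term at argument $x$ with coefficient $\dim(V^x)/\alpha_i$. Subtracting, the generic coefficient becomes $\dim(V^x)(\chi(\Sigma)-\sum_i1/\alpha_i)=\dim(V^x)\chi(\Sigma^*)$, and collecting everything on $V^x$ with $x\neq0$ reproduces $f(s)\kappa_x(s/2)$ verbatim from \eqref{eq:49}. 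The component $V^0$ is handled the same way but separately: there $\rho(f)=\id$, so $\rtr(\rho(f)^n)=\dim V^0$ and one must read $\zeta(s,0)+\zeta(s,1)$ as $2\zeta(s)=\sum_{n\in\Z^*}|n|^{-s}$; the eigenangles of $\rho(f_i)$ on $V^0$ split into the zero ones on $V^{0,i}=\ker(\id-\rho(f_i))$ and the non-zero ones on its complement, and using \eqref{eq:52}, $\kappa(M,\rho)=\dim(V^0)\chi(\Sigma^*)+\sum_i\dim V^{0,i}$, the bookkeeping produces exactly $f(s)\bigl(\kappa_0(s/2)-\kappa(M,\rho)\bigr)$ as in \eqref{eq:50}. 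Summing over all $V^x$ gives \eqref{eq:56}; the absence of a cohomological contribution on the dynamical side is precisely why $\kappa(M,\rho)$ is subtracted on the left.

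I expect the main difficulty to be bookkeeping rather than conceptual. The dynamical Dirichlet series converges only for $\Re(s)$ small and $\kappa(s/2)$ only for $\Re(s)$ large, so \eqref{eq:56} is an identity between meromorphic continuations; this is harmless, since Theorem~\ref{thm:kappa_x} already presents $\kappa$ as an explicit finite sum of Hurwitz zeta functions and Hurwitz's formula furnishes the continuation of the other side, so the two closed forms can simply be compared. The genuinely delicate step is matching all the $\alpha_i$-, $\dim V^x$- and $\dim V^{0,i}$-dependent constants, in particular the passage $\chi(\Sigma)\leftrightarrow\chi(\Sigma^*)$ arising from the excluded multiples of $\alpha_i$, and the separate accounting of the zero eigenangles in the $V^0$ block.
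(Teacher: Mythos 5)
Your proposal is correct and follows essentially the same route as the paper: both decompose $V$ into irreducibles $V^x$, invoke Proposition~\ref{prop:ind} and the explicit Hurwitz-zeta expression of $\kappa_x$ from Theorem~\ref{thm:kappa_x}, and use the Hurwitz functional equation (in the $\cos$-combination, precisely as in~\eqref{eq:61}--\eqref{eq:62}) to match the two sides, with the $\chi(\Sigma)\leftrightarrow\chi(\Sigma^*)$ discrepancy resolved by the $\sum_{\alpha_i\nmid n}=\sum_n-\sum_{\alpha_i\mid n}$ splitting and the $V^0$ block handled separately via~\eqref{eq:52}. The only cosmetic difference is direction: the paper unfolds $f(s)\kappa_x(s/2)$ into a Dirichlet series and then regroups it orbitwise, whereas you unfold the dynamical sum and reconstitute the Hurwitz-zeta closed form.
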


We first make some comments about this identity. First, to remove the
real part in \eqref{eq:56}, one could also sum over all orbits $C$
\emph{and} $C^{-1}$, the latter corresponding to the opposite flow
$-T$. Indeed the torsion function is not sensitive to a change of
$\theta\mapsto -\theta$, together with $J\mapsto -J$, since $\kappa$
is defined using real operators $\Delta_0$ and $\Delta_1$.

Also let
\begin{equation}
  \label{eq:57}
  Z_\rho(s) = \sum_{C} \ind(C)
  \rtr(\rho(C))  \ell(C)^s
\end{equation}
be the dynamical zeta side of \eqref{eq:56}. From Proposition
\ref{prop:ind} this converges for $\Re (s) < 0$. In contrast the
spectral side
$$
\kappa^*(s/2) = \kappa (s/2) - \kappa (M, \rho)=
2\tr^*(\Delta_0^{-s/2}) - \tr^*(\Delta_1^{-s/2})
$$
is a converging series for $\Re (s) > 2$. Hence, when seen as
series, the spectral and dynamical sides of \eqref{eq:56} \emph{never}
converge for the same $s$, and the identity only holds through
meromorphic continuation.  When $s\to 0$ we get
\begin{equation}
  \label{eq:71}
  \lim_{s \rightarrow 0} \Bigl( Z_\rho(s) + \frac{ \kappa(M, \rho)}{s}
  \Bigr) = \kappa'(0)/2 = -
  \ln (T_{RS}(M, \rho))\,, 
\end{equation}
and ($\ln$ of) the analytic torsion may be seen as a topological
regularisation of the \emph{formal} dynamical series
\begin{equation}
  \label{eq:59}
  - ''Z_\rho(0)'' = -\sum_{C} \ind(C) \rtr (\rho(C)) \,. 
\end{equation} 
Comparing with \eqref{eq:55} and Fried's dynamical function yields
$$
''Z_\rho(0)'' = Z_\rho(0) = -\Re (Z_F(0))\,,
$$
in the acyclic case, and the dynamical functions $Z_F$ and $Z_\rho$
both provide analytic continuation of the same dynamical series in
\eqref{eq:59}.  This series has been interpreted in \cite{Fried87,
  Fried} as being the total Fuller measure of periodic orbits, and has
a formal invariance by deformation of the flow, as long as orbit
periods stay bounded.

\smallskip

We note also that the trace formula \eqref{eq:56} can be written in a
more symmetric manner
\begin{equation}
  \label{eq:60}
  \Gamma(s) \kappa^*(s) = \frac{2^{1-2s}}{
    \sqrt \pi} \Gamma(\frac{1}{2} - s) Z_\rho(2s)\,,
\end{equation}
as follows from the classical identities (see \cite{WW})
$$
\Gamma(s) \Gamma(s+ \frac{1}{2}) = 2^{1-2s} \sqrt{ \pi} \Gamma(2s)
\quad \mathrm{and} \quad \Gamma(s + \frac{1}{2}) \Gamma( -s +
\frac{1}{2}) = \frac{\pi}{\cos (\pi s)}\,.
$$
This formulation will be useful in \S\ref{sec:heat-kernel-as}.

\smallskip

As a last comment, we observe that the trace formula \eqref{eq:56} is
homogeneous in the constant rescaling $\theta\mapsto K \theta$.
Indeed, the metric here is $g = d\theta(\cdot, J \cdot) + \theta^2$,
hence $\ell(C)= \int_C \theta $ changes to $K \ell(C)$, while the
fourth-order contact Laplacians $\Delta_i $ are homogeneous and
rescale to $K^{-2} \Delta_i$. Thus $\zeta^*(\Delta^{1/2})(s) $
rescales to $K^s\zeta^*(\Delta^{1/2})(s)$ as needed. Such a property
does not hold for the Hodge--de Rham Laplacians.

\begin{proof}[Proof of Theorem \ref{thm:torsion-zeta}]
  We will start from the expression of $\kappa(s)$ by Hurwitz zeta
  functions as given in Theorem \ref{thm:kappa_x}. First Hurwitz's
  formula (see \cite[\S 13]{WW}) states that for $\Re(s)< 0$
  $$
  \zeta(s,x) = \frac{2\Gamma(1-s)}{(2\pi)^{1-s}} \Bigr[
  \sin(\frac{\pi s}{2}) \sum_{n=1}^{+\infty} \cos\bigl(\frac{2\pi
    xn}{n^{1-s}} \bigr) + \cos(\frac{\pi s}{2}) \sum_{n=1}^{+\infty}
  \sin \bigl(\frac{2\pi xn}{n^{1-s}} \bigr)\Bigr]\,,
  $$
  so that using $f(s) f(1-s) = \pi/2$ gives
  \begin{equation}
    \label{eq:61}
    f(s) (\zeta(s,x) + \zeta(s, 1-x))  = \sum_{n=1}^{+\infty} \Re
    \bigl(e^{2i\pi xn})\frac{(2\pi n)^s }{n}\,.
  \end{equation}
  Note also the corresponding limit expression when $x \rightarrow
  0^+$ with $\Re (s) <0$:
  \begin{equation}
    \label{eq:62}
    2f(s) \zeta(s) = \sum_{n=1}^{+\infty} \frac{(2\pi n)^s }{n}\,.
  \end{equation}
  
  $\bullet$ We study the contribution of $\kappa_x$ on $V^x$ with $x
  \in (0,1)$. By \eqref{eq:61}, formula \eqref{eq:49} yields
  \begin{displaymath}
    f(s) \kappa_x(s/2)= \chi(\Sigma^*) \sum_{n\geq 1} \frac{
      \rtr(\rho(f^n))}{n} (2\pi n)^s 
    + \sum_i\sum_{n\geq 1} \frac{\rtr(\rho(f_i^n))}{n}
    \bigl(\frac{2\pi n}{\alpha_i}\bigr)^s\,,
  \end{displaymath}
  hence by Proposition \ref{prop:ind},
  \begin{align*}
    f(s) \kappa_x(s/2) & = \sum_{n\geq 1} \bigl(\ind(f^n) - \sum_i
    \frac{1}{\alpha_i} \bigr)
    \rtr(\rho(f^n)) \ell(f^n)^s \\
    & \quad +\sum_i \sum_{n \notin \alpha_i \N} \ind(f_i^n)
    \rtr(\rho(f_i^n)) \ell(f_i^n)^s \\
    & \quad + \sum_i \sum_{k \geq 1} \frac{\rtr(\rho(f^k))}{k
      \alpha_i }\ell(f^k)^s \\
    & = \sum_C \ind(C) \rtr (\rho(C))\, l(C)^s \,,
  \end{align*}
  as needed in \eqref{eq:56}.
  
  \medskip
  
  $\bullet$ We now study $\kappa_0$ on $V^0 = \ker (\mathrm{Id} -
  \rho(f))$. Recall that $\spec \rho(f_i) = \{e^{2i \pi x_{i,j}}\}$
  and $V^{0,i} = \ker (\mathrm{Id} - \rho(f_i))$. By
  \eqref{eq:61}--\eqref{eq:62} formula \eqref{eq:50} reads
  \begin{multline}
    \label{eq:63}
    f(s) \bigl( \kappa_0(s/2) - \kappa(M, \rho) \bigr) =
    \kappa(M,\rho) \sum_{n\geq 1} \frac{(2\pi n)^s }{n} + \sum_i
    \sum_{n\geq 1} \frac{\rtr_{(V^{0,i})^\bot}(\rho(f_i^n))}{n}
    \bigl(\frac{2\pi
      n}{\alpha_i}\bigr)^s  \\
    + \sum_i \sum_{n\geq 1} \dim(V^{0,i}) (\alpha_i^{-s} -1)
    \frac{(2\pi n)^s}{n}\,.
  \end{multline}
  By \eqref{eq:52}, the first series reads
  \begin{align*}
    \kappa(M,\rho) \sum_{n\geq 1} \frac{(2\pi n)^s }{n} & =
    \bigl[\dim(V^0) (\chi(\Sigma) - \sum_i \frac{1}{\alpha_i}) +
    \sum_i \dim (V_0^i)\bigr] \sum_{n\geq 1} \frac{(2\pi
      n)^s }{n} \\
    & = \sum_{n\geq 1} \ind(f^n) \rtr (\rho(f^n)) \ell(f^n)^s - \sum_i
    \sum_{n\geq 1} \dim (V^0) \frac{(2\pi n)^s}{n \alpha_i} \\
    & \quad + \sum_i \sum_{n\geq 1} \dim (V^{0,i}) \frac{(2\pi n)^s
    }{n } \,.
  \end{align*}
  Since $\rho(f_i) = \mathrm{Id}$ on $V^{0,i}$, the second series in
  \eqref{eq:63} splits into
  \begin{align*}
    \sum_i \sum_{n\geq 1} \frac{\rtr_{(V^{0,i})^\bot}(\rho(f_i^n))}{n}
    \bigl(\frac{2\pi n}{\alpha_i}\bigr)^s &= \sum_i \sum_{n\geq 1}
    \frac{\rtr(\rho(f_i^n))}{n} \bigl(\frac{2\pi n}{\alpha_i}\bigr)^s
    - \sum_i \sum_{n\geq 1} \frac{\dim V^{0,i}}{n} \bigl(\frac{2\pi
      n}{\alpha_i}\bigr)^s
    \\
    & = \sum_i \sum_{n \notin
      \alpha_i \N} \ind(f_i^n) \rtr(\rho(f_i^n) ) \, \ell(f_i^n)^s \\
    & \quad + \sum_i \sum_{k\geq 1}\dim V^0 \frac{(2\pi k)^s}{k
      \alpha_i} - \sum_i \sum_{n\geq 1} \frac{\dim V^{0,i}}{n
      \alpha_i^s} (2\pi n)^s
  \end{align*}
  since $\rho(f_i^n) = \rho(f^k) = \mathrm{Id}$ on $V^0$ for $n = k
  \alpha_i$. Therefore after cancellations \eqref{eq:63} yields
  \begin{displaymath}
    f(s) (\kappa_0(s/2) - \kappa(M, \rho)) = \sum_C \ind (C)
    \rtr(\rho(C)) \ell(C)^s\,,
  \end{displaymath}
  as needed.
\end{proof}

\subsection{The torsion heat trace as a dynamical theta function}
\label{sec:heat-kernel-as}

In Theorem \ref{thm:torsion-zeta}, spectral and dynamical aspects of
analytic torsion are compared through zeta functions. One can also
work at the level of heat kernels.  Consider the heat operators of the
fourth-order Laplacians $\Delta_0$ and $\Delta_1$ of the contact
complex, and set
\begin{align}
  \label{eq:64}
  \tr_\kappa (e^{-t \Delta}) & = 2 \tr (e^{-t \Delta_0}) - \tr
  (e^{-t \Delta_1}) \,\\
  & = \tr_\kappa^* (e^{-t \Delta}) + \kappa(M, \rho) \nonumber\,.
\end{align}
Recall that for $\Re (s) >1$,
\begin{equation}
  \label{eq:65}
  \kappa^*(s) = 2 \zeta^*(\Delta_0)(s) - \zeta^*(\Delta_1)(s) =
  \frac{1}{\Gamma(s)} \int_0^{+\infty} t^{s-1} \tr_\kappa^*(e^{-t
    \Delta}) dt \,.
\end{equation}
Then the following trace formula holds in our CR Seifert setting.
\begin{thm}
  \label{thm:Selberg}
  One has
  \begin{equation}
    \label{eq:66}
    \tr_\kappa(e^{-t \Delta}) = \dim V \frac{\sqrt \pi
      \chi(\Sigma)}{\sqrt t } + 
    \frac{1}{\sqrt{\pi t} }\sum_C 
    \ell(C) \ind(C) \rtr(\rho(C)) e^{-\ell(C)^2/ 4t}\,, 
  \end{equation}
  where $C$ runs over free homotopical classes of closed orbits of the
  Reeb flow and $\chi(\Sigma) $ is the rational Euler class of the
  quotient orbifold.
\end{thm}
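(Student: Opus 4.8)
The plan is to recover the heat trace by Mellin inversion, the dynamical content being already encoded in the functional identity~\eqref{eq:60}. By~\eqref{eq:65}, $\Gamma(s)\kappa^*(s)$ is the Mellin transform of $t\mapsto\tr_\kappa^*(e^{-t\Delta})$, convergent for $\Re(s)>1$, and by~\eqref{eq:60} it agrees, as a meromorphic function of $s\in\C$, with $M(s):=\frac{2^{1-2s}}{\sqrt\pi}\Gamma(\tfrac12-s)Z_\rho(2s)$. On the other hand, for a single orbit the substitution $u=\ell^2/4t$ gives, for $\Re(s)<\tfrac12$,
\[
\int_0^{+\infty}t^{s-1}\,\frac{\ell}{\sqrt{\pi t}}\,e^{-\ell^2/4t}\,dt=\frac{2^{1-2s}}{\sqrt\pi}\,\Gamma(\tfrac12-s)\,\ell^{2s}.
\]
Multiplying by $\ind(C)\rtr(\rho(C))$, summing over all free homotopy classes $C$, and using Proposition~\ref{prop:ind} to see that the resulting $s$-series converges absolutely for $\Re(s)<0$ (whence Tonelli applies to the positive integrands), one concludes by~\eqref{eq:57} that the function
\[
\Psi(t):=\frac{1}{\sqrt{\pi t}}\sum_C\ell(C)\,\ind(C)\,\rtr(\rho(C))\,e^{-\ell(C)^2/4t}
\]
--- absolutely convergent for each fixed $t>0$, since the $\ell(C)$ form a discrete set tending to $+\infty$ with polynomially bounded multiplicity while $\ell(C)\ind(C)$ stays bounded --- has Mellin transform equal to $M(s)$ for $\Re(s)<0$.

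Thus $\tr_\kappa^*(e^{-t\Delta})$ and $\Psi(t)$ have Mellin transforms that are one and the same meromorphic function $M(s)$, represented on the disjoint half-planes $\Re(s)>1$ and $\Re(s)<0$. Mellin inversion gives
\[
\tr_\kappa^*(e^{-t\Delta})=\frac{1}{2\pi i}\int_{\Re(s)=c}M(s)\,t^{-s}\,ds\ \ (c>1),\qquad
\Psi(t)=\frac{1}{2\pi i}\int_{\Re(s)=c'}M(s)\,t^{-s}\,ds\ \ (c'<0),
\]
and shifting the first contour down to the second collects the residues of $M(s)t^{-s}$ in the strip $c'<\Re(s)<c$. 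By Corollary~\ref{cor:residue_kappa}, $\kappa$, and hence $\kappa^*=\kappa-\kappa(M,\rho)$, has there a single pole, simple, at $s=\tfrac12$, with residue $\chi(\Sigma)\dim V$; and the factor $\Gamma(s)$ contributes a simple pole at $s=0$ with residue $\kappa^*(0)=\kappa(0)-\kappa(M,\rho)=-\kappa(M,\rho)$, where $\kappa(0)=0$ by Corollary~\ref{cor:3var}. Therefore
\[
\tr_\kappa^*(e^{-t\Delta})-\Psi(t)=\Gamma(\tfrac12)\,\chi(\Sigma)\dim V\,t^{-1/2}-\kappa(M,\rho)=\frac{\sqrt\pi\,\chi(\Sigma)\dim V}{\sqrt t}-\kappa(M,\rho),
\]
and adding $\kappa(M,\rho)$ according to~\eqref{eq:64} yields precisely~\eqref{eq:66}.

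The main point requiring care is the legitimacy of the contour shift. One needs $M(s)=\Gamma(s)\kappa^*(s)$ to be holomorphic in the closed strip $c'\le\Re(s)\le c$ apart from the two poles above --- immediate from Corollary~\ref{cor:residue_kappa} together with the pole structure of $\Gamma$ --- and to decay as $|\Im(s)|\to\infty$ uniformly in vertical strips, so that the horizontal sides of the rectangle vanish in the limit. This follows from Stirling's formula, which gives exponential decay of $\Gamma(s)$ along vertical lines, dominating the at most polynomial growth of $\kappa^*(s)$: by Theorem~\ref{thm:kappa_x}, $\kappa(s)$ is a finite integral combination of Hurwitz zeta functions $\zeta(2s,a)$ with entire prefactors $\alpha_i^{-2s}$, each term being of polynomial growth in vertical strips. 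The remaining verifications --- the elementary Mellin computation for a single orbit, the Tonelli interchange for $\Re(s)<0$, and the fixed-$t$ convergence of $\Psi(t)$ --- are routine.
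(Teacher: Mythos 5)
Your proof is correct and rests on the same ingredients as the paper's: the elementary computation of $\int_0^\infty t^{s-1}\,\ell(\pi t)^{-1/2}e^{-\ell^2/4t}\,dt$, the functional identity \eqref{eq:60}, the pole structure provided by Corollary~\ref{cor:residue_kappa} together with $\kappa(0)=0$, and the heat development of Proposition~\ref{prop:torsion-heat}. The only departure is mechanical. The paper brings the two Mellin transforms to a common strip of convergence by subtracting the singular parts of $\tr_\kappa^*$ at $t\to 0$ and of $\vartheta$ at $t\to\infty$ (via characteristic functions on $]0,1]$ and $[1,\infty[$), then concludes by injectivity of the Mellin transform. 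You instead keep both Mellin representations in their natural half-planes, invert each, and shift the inversion contour through the critical strip, collecting the residues at $s=\tfrac12$ and $s=0$. These are two standard and interchangeable ways to exploit the same analytic data; your version trades the paper's appeal to Fourier injectivity for the Stirling/Phragm\'en--Lindel\"of decay estimate that licenses the contour shift, and both verifications are routine here. One small point worth making explicit in your write-up: the contour $\Re(s)=c'$ should be taken with $-1<c'<0$, so that only the poles at $s=\tfrac12$ and $s=0$ are crossed and not the $\Gamma$-pole at $s=-1$ (not that the final identity would change, since residues further left would appear on both sides, but it keeps the bookkeeping clean).
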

Hence the torsion heat trace may be compared with the purely dynamical
theta function
\begin{equation}
  \label{eq:67}
  \vartheta (t) = \frac{1}{\sqrt{\pi t} }\sum_C 
  \ell(C) \ind(C) \rtr(\rho(C)) e^{-\ell(C)^2/ 4t}\,.
\end{equation}

We will first need the following fact on the asymptotic heat
development.
\begin{prop}
  \label{prop:torsion-heat}
  As $t\searrow 0$, it holds that
  $$
  \tr_\kappa (e^{-t \Delta}) = \dim V \frac{\sqrt \pi \chi
    (\Sigma)}{\sqrt t} + O(\sqrt t )\,.
  $$
\end{prop}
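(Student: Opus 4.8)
The plan is to read the short-time expansion of $\tr_\kappa(e^{-t\Delta})$ directly off the Heisenberg heat asymptotics, and then to pin down its first three coefficients using the already-established analytic properties of $\kappa$. Both $\Delta_0 = \Delta_H^2$ and $\Delta_1 = D^*D + (d_H\delta_H)^2$ are fourth-order Laplacians of the (possibly twisted) contact complex on the $3$-manifold $M$, so here $n = 1$, they have Heisenberg order $v = 4$, and both satisfy the Rockland condition; see \S\ref{subsec:heisenberg}. Hence Proposition~\ref{prop:hypoheat} gives, as $t\searrow 0$,
$$\tr(e^{-t\Delta_k}) \sim \sum_{j\geq 0} A_j(\Delta_k)\, t^{\frac{j-2}{2}},\qquad A_j(\Delta_k) = \int_M \tra(a_j(\Delta_k))\,\dvol,$$
with the usual uniform remainder control. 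Subtracting, $\tr_\kappa(e^{-t\Delta}) = 2\tr(e^{-t\Delta_0}) - \tr(e^{-t\Delta_1}) \sim \sum_{j\geq 0} B_j\, t^{\frac{j-2}{2}}$, where $B_j := 2A_j(\Delta_0) - A_j(\Delta_1)$ and the exponents run through $-1, -1/2, 0, 1/2, \dots$. So it is enough to prove $B_0 = 0$, $B_1 = \dim V\,\sqrt\pi\,\chi(\Sigma)$ and $B_2 = 0$: truncating the expansion after the $t^0$-term then leaves a remainder $O(t^{1/2}) = O(\sqrt t)$, which is the assertion.

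For the coefficients I would use the standard Mellin-transform dictionary between heat coefficients and poles/special values of $\zeta(\Delta_k)$ which underlies Proposition~\ref{prop:hypozeta}: from
$$\zeta(\Delta_k)(s) = \frac{1}{\Gamma(s)}\int_0^{\infty} t^{s-1}\bigl(\tr(e^{-t\Delta_k}) - \dim\ker\Delta_k\bigr)\,dt + \dim\ker\Delta_k,$$
the coefficient $A_j(\Delta_k)$ for $j = 0,1$ produces a simple pole of $\zeta(\Delta_k)$ at $s = (2-j)/2$ with residue $A_j(\Delta_k)/\Gamma((2-j)/2)$, while for $j = 2$ the zero of $1/\Gamma(s)$ at $s = 0$ cancels the would-be pole and one is left with $\zeta(\Delta_k)(0) = A_2(\Delta_k)$, exactly the $t^0$ heat coefficient as in Proposition~\ref{prop:hypozeta}. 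Since $\kappa(s) = 2\zeta(\Delta_0)(s) - \zeta(\Delta_1)(s)$ by \eqref{eq:39}, this gives
$$B_0 = \mathrm{Res}_{s=1}\kappa(s),\qquad B_1 = \Gamma(1/2)\,\mathrm{Res}_{s=1/2}\kappa(s) = \sqrt\pi\,\mathrm{Res}_{s=1/2}\kappa(s),\qquad B_2 = \kappa(0).$$

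It then remains to feed in Corollaries~\ref{cor:residue_kappa} and~\ref{cor:3var}. By Corollary~\ref{cor:residue_kappa} the torsion function $\kappa$ has a \emph{unique} simple pole, located at $s = 1/2$, with residue $\dim V\,\chi(\Sigma)$; hence $B_0 = \mathrm{Res}_{s=1}\kappa = 0$ and $B_1 = \sqrt\pi\,\dim V\,\chi(\Sigma)$. And $\kappa(0) = 0$ by Corollary~\ref{cor:3var}(1), or equivalently by evaluating the Hurwitz-zeta expression of Theorem~\ref{thm:kappa_x} at $s = 0$, so $B_2 = 0$. Combining, $\tr_\kappa(e^{-t\Delta}) = \dim V\,\frac{\sqrt\pi\,\chi(\Sigma)}{\sqrt t} + O(\sqrt t)$, as claimed. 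In this way the proposition is essentially a repackaging of Corollaries~\ref{cor:residue_kappa} and~\ref{cor:3var}; the only points that need care are checking that $\Delta_0$ and $\Delta_1$ share the Heisenberg-order-$4$ exponent sequence (so that the first omitted term is $t^{1/2}$ rather than $t^0$) and tracking the pole-versus-coefficient correspondence with the correct $\Gamma$-factors, which is where the modest bookkeeping lies.
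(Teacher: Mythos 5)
Your proof is correct and takes essentially the same route as the paper: read the short-time heat expansion (exponents $\frac{j-2}{2}$ since $n=1$, $v=4$), use the Mellin-transform dictionary to translate the $t^{-1}$, $t^{-1/2}$, $t^0$ coefficients into $\mathrm{Res}_{s=1}\kappa$, $\Gamma(1/2)\mathrm{Res}_{s=1/2}\kappa$, and $\kappa(0)$, then feed in Corollary~\ref{cor:residue_kappa} and $\kappa(0)=0$. The paper phrases the constant term as $\kappa(M,\rho)+\mathrm{Res}_0(\Gamma(s)\kappa^*(s))$, which reduces to $\kappa(0)$ exactly as in your $B_2=\kappa(0)$ identification, so the bookkeeping is the same.
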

\begin{proof}
  By Corollary \ref{cor:residue_kappa}, the torsion function $\kappa $
  has a single simple pole at $s=1/2$ with residue $\chi(\Sigma) \dim
  V$. On the other hand we know that, for a fourth-order hypoelliptic
  Laplacian in dimension $3$, as $t \searrow 0$,
  $$
  \tr_\kappa (e^{-t \Delta}) = \frac{c_1}{t} + \frac{c_{1/2}}{\sqrt
    t} + c_0 + O(\sqrt t)\,.
  $$
  Mellin's transform \eqref{eq:65} split into $\int_0^1 +
  \int_1^{+\infty}$ providing
  $$
  c_1= \mathrm{Res}_{s=1} (\kappa(s)) = 0\,,\quad c_{1/2} =
  \Gamma(1/2) \mathrm{Res}_{1/2} (\kappa) = \sqrt\pi \chi(\Sigma) \dim
  V \,,
  $$
  and
  $$
  c_0 = \kappa(M, \rho) + \mathrm{Res}_0 (\Gamma(s) \kappa^*(s)) =
  0\,,
  $$
  since $\kappa^*(0)= \kappa(0) - \kappa(M,\rho) = - \kappa(M,
  \rho)$.
\end{proof}

\smallskip

We can now prove Theorem \ref{thm:Selberg}.

\begin{proof}
  One takes Mellin transforms $\mathcal{M}$ of both sides in
  \eqref{eq:66}.  For $\Re (s) < 0$ one finds
  \begin{equation}
    \label{eq:68}
    \mathcal{M}( \vartheta)(s)   = \frac{2^{1-2s}}{
      \sqrt \pi} \Gamma(\frac{1}{2} - s) Z_\rho(2s)\,.
  \end{equation}
  On the other hand \eqref{eq:65} and Proposition
  \ref{prop:torsion-heat} yield that, for $\Re (s) > 1/2$,
  $$
  \mathcal{M}(\tr_\kappa^*(e^{-t \Delta}))(s) = \Gamma(s)
  \kappa^*(s)\,.
  $$
  In order to compare these identities we need first to extend them
  to a common domain. Indeed set
  $$\tr_0 (e^{-t \Delta}) = \tr^*_\kappa(e^{-t \Delta}) +
  \Bigl(\kappa(M, \rho) - \dim V \frac{\sqrt \pi \chi (\Sigma)}{\sqrt
    t}\Bigr)\chi_{]0,1]}(t);
  $$
  by Proposition \ref{prop:torsion-heat} this has a Mellin
  transform for $\Re (s) > -1/2$ and
  \begin{equation}
    \mathcal{M}(\tr_0 (e^{-t \Delta}))(s) = \Gamma(s) \kappa^*(s) +
    \frac{\kappa(M, \rho)}{s} + \dim V \frac{\sqrt \pi \chi
      (\Sigma)}{s-1/2}\,. 
  \end{equation}
  Set also
  \begin{equation*}
    \vartheta_0(t) = \vartheta(t) - \Bigl( \kappa(M, \rho) - \dim V
    \frac{\sqrt \pi \chi(\Sigma)}{\sqrt t}\Bigr)\chi_{[1, +\infty[}(t) \,,
  \end{equation*}
  so that \eqref{eq:68} yields, for $\Re (s) < 0$,
  \begin{align*}
    \mathcal{M}(\vartheta_0)(s) & = \frac{2^{1-2s}}{ \sqrt \pi}
    \Gamma(\frac{1}{2} - s) Z_\rho(2s) + \frac{\kappa(M, \rho)}{s} +
    \dim V \frac{\sqrt \pi \chi
      (\Sigma)}{s-1/2} \\
    & = \mathcal{M}(\tr_0 (e^{-t \Delta}))(s)\,,
  \end{align*}
  for $-1/2 < \Re (s) < 0$, by \eqref{eq:60} and \eqref{eq:68}. Hence
  by injectivity of the Mellin transform, coming from Fourier
  injectivity on integrable functions here, one concludes that
  $\vartheta_0 (t)= \tr_0 (e^{-t \Delta})$, yielding the trace
  formula.
  
  The authors thank Patrick G{\'e}rard for an enlightening discussion
  on this proof.
\end{proof}

Formula \eqref{eq:66} is a typical Selberg-type trace formula, which
holds in many other geometric situations, see \cite[p.~57]{Fried87}
for instance.  Note that it holds here even in \emph{variable}
curvature.  This may look unusual as Selberg's technique is algebraic
and relies on group actions, hence makes sense on uniformised (locally
symmetric) manifolds. However we know, by \eqref{eq:40} or Theorem
\ref{thm:Lefschetz-torsion}, that on CR Seifert manifolds the torsion
function $\kappa(s)$ is `topological', meaning independent of the
complex structure $J$ since $\theta$ is fixed by the circle action.
Now except for some cases that fibre over the sphere $S^2$ with two
singular points, all other CR Seifert manifolds can be uniformised,
i.e.~endowed with a constant curvature metric; see e.g.~\cite{Belgun}
or \cite[Theorems 1.1, 1.2]{FS}.  By Moser's lemma this can be done
with a fixed volume.  Thus, except for the special cases mentioned,
one could have worked over a uniformised orbifold $\Sigma$, where
Selberg's technique should also be applicable.

\medskip

The trace formula \eqref{eq:66} has a striking consequence for the
small time behaviour of the torsion heat trace $\tr_\kappa( e^{-t
  \Delta})$.  Indeed, from its definition \eqref{eq:67} and
Proposition \ref{prop:ind}, the dynamical theta function $\vartheta$
clearly decays very fast as
$$
\vartheta(t) = O(e^{-C /t}) \quad \mathrm{when} \quad t \searrow
0\,,
$$
so that instead of Proposition \ref{prop:torsion-heat} we get the
\emph{full} heat development.
\begin{cor}
  \label{cor:full-heat}
  On CR Seifert manifolds, as $t\searrow 0$ we have
  $$
  \tr_\kappa (e^{-t \Delta}) = \dim V \frac{\sqrt \pi \chi
    (\Sigma)}{\sqrt t} + O(e^{-C/t})\,.
  $$
\end{cor}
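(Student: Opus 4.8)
The plan is to read the statement off directly from the Selberg-type trace formula \eqref{eq:66} of Theorem~\ref{thm:Selberg}. Writing that formula as
$$
\tr_\kappa(e^{-t\Delta}) = \dim V\,\frac{\sqrt\pi\,\chi(\Sigma)}{\sqrt t} + \vartheta(t),
$$
with $\vartheta$ the dynamical theta function \eqref{eq:67}, the whole task reduces to showing that $\vartheta(t) = O(e^{-C/t})$ as $t\searrow 0$ for some constant $C>0$.

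To see this I would use Proposition~\ref{prop:ind}, which lists all free homotopy classes $C$ of closed orbits of the Reeb flow together with their lengths and Fuller indices: the $f^n$ ($n\geq 1$) of length $2\pi n$ and index $\chi(\Sigma)/n$, and the isolated $f_i^n$ ($n\notin\alpha_i\N$) of length $2\pi n/\alpha_i$ and index $1/n$. Two features matter. First, every orbit length is bounded below by a positive constant $\ell_0$ — one may take $\ell_0 = 2\pi/\max_i\alpha_i$ when $\Sigma$ has conical points, and $\ell_0=2\pi$ otherwise. Second, along these families the lengths grow only linearly in $n$, the absolute Fuller indices are bounded by $\max(|\chi(\Sigma)|,1)$, only finitely many classes share a given length, and $|\rtr(\rho(C))|\leq\dim V$ since $\rho$ is unitary. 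Hence, using $\ell(C)\geq\ell_0$ to write $e^{-\ell(C)^2/4t}\leq e^{-\ell_0^2/8t}\,e^{-\ell(C)^2/8t}$, I would bound
$$
|\vartheta(t)| \leq \frac{\dim V}{\sqrt{\pi t}}\,e^{-\ell_0^2/8t}\sum_C \ell(C)\,|\ind(C)|\,e^{-\ell(C)^2/8t},
$$
where the remaining sum is dominated, uniformly for $0<t\leq 1$, by a convergent numerical series (its terms decay like $e^{-cn^2/t}$ in each family). Therefore $|\vartheta(t)| = O\bigl(t^{-1/2}e^{-\ell_0^2/8t}\bigr) = O(e^{-C/t})$ for any $0<C<\ell_0^2/8$, which is the assertion of the corollary.

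I do not expect any genuine obstacle here: all the content sits in Theorem~\ref{thm:Selberg}, and the exponential decay of $\vartheta$ simply upgrades the crude $O(\sqrt t)$ remainder of Proposition~\ref{prop:torsion-heat} to an exponentially small one. The only point requiring a line of care is the uniform convergence estimate for the dynamical sum, and that is settled at once by the explicit description of the orbit lengths and Fuller indices in Proposition~\ref{prop:ind}.
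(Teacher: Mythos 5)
Your proposal matches the paper's argument: the corollary is read off directly from the Selberg-type trace formula of Theorem~\ref{thm:Selberg}, after observing that the dynamical theta function $\vartheta(t)$ decays like $O(e^{-C/t})$ as $t\searrow 0$, thanks to the positive lower bound on orbit lengths and the growth control from Proposition~\ref{prop:ind}. You merely spell out the uniform convergence estimate that the paper leaves as "clearly decays very fast", so there is nothing to add.
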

Thus on such manifolds this torsion heat trace does show a `fantastic
cancellations' phenomenon as encountered for the heat of Dirac
operators.

Note that the only surviving term as $t\searrow 0$ in this heat
development is the integral of curvature data, as should be the case.
Gauss--Bonnet reads here
$$
\int_M R \, \theta \wedge d \theta = 2 \pi \int_\Sigma R d \theta =
4\pi^2 \chi(\Sigma)\,,
$$
where $R$ stands for the Tanaka--Webster scalar curvature, which
coincides with the Riemannian scalar curvature of the base in this
Seifert case; see e.g.~\cite{BHR}.  Then, by universality of the
coefficients of the heat development, the following `fantastic
factorisation' holds on \emph{any} $3$-dimensional contact manifold.
\begin{cor}
  \label{cor:full-heat-contact} On any contact $3$-manifold, the full
  development of $\tr_\kappa (e^{-t \Delta})$ as $t\searrow 0$ is of
  type
  \begin{equation}
    \label{eq:69}
    \tr_\kappa (e^{-t \Delta}) \sim \frac{\dim V}{4 \pi \sqrt{\pi
        t}} \int_M R \,\theta \wedge d 
    \theta   + \sum_{n\geq 0} t^{n/2} \int_M P_n(R,A) d
    \mathrm{vol} \,,   
  \end{equation}
  where all invariant curvature polynomials $P_n(R,A)$ factorise
  through Tanaka--Webster torsion $A = \mathcal{L}_T J$ and its
  covariant derivatives.
\end{cor}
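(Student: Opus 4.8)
The plan is to deduce the statement on an arbitrary contact $3$-manifold from the CR Seifert case (Corollary~\ref{cor:full-heat}), using the universality of the local heat coefficients. By Proposition~\ref{prop:hypoheat} applied to the fourth-order Laplacians $\Delta_0,\Delta_1$ (both satisfying the Rockland condition) one has $\tr(e^{-t\Delta_k})\sim\sum_{j\ge0}t^{(j-2)/2}\int_M\tra(a_j(\Delta_k))\,\theta\wedge d\theta$ as $t\searrow0$, and the densities $a_j$ are produced from the Heisenberg principal symbol by a universal algorithm. Exactly as for $\kappa(0)$ in \eqref{eq:22}, each $\tra(a_j(\Delta_k))$ is therefore $\dim V$ times a \emph{universal} polynomial in the Tanaka--Webster scalar curvature $R$, torsion $A$ and their covariant derivatives (Tanno's tensor $\nabla J$ vanishes in dimension $3$), the same polynomial for every contact $3$-manifold and every flat bundle. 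Hence $\tr_\kappa(e^{-t\Delta})\sim\dim V\sum_{j\ge0}t^{(j-2)/2}\int_M Q_j(R,A,\nabla)\,\theta\wedge d\theta$ with the $Q_j$ universal.

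I would then run a scaling argument to pin down the low-order terms. Under $\theta\mapsto K\theta$ with $K$ a positive constant one has $\Delta_k\mapsto K^{-2}\Delta_k$ and $\theta\wedge d\theta\mapsto K^{2}\,\theta\wedge d\theta$, so $Q_j$ must be a curvature invariant of scaling exponent $-j$ (weight $2j$). For $j=0$ the only such invariant is a constant and for $j=1$ a multiple of $R$, so the $t^{-1}$- and $t^{-1/2}$-coefficients of $\tr_\kappa(e^{-t\Delta})$ are $c_0\dim V\,\mathrm{vol}(M)$ and $c_1\dim V\int_M R\,\theta\wedge d\theta$ for universal constants $c_0,c_1$. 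Comparing with Corollary~\ref{cor:full-heat} on any CR Seifert manifold --- where there is no $t^{-1}$ term and the $t^{-1/2}$ term equals $\dim V\sqrt\pi\,\chi(\Sigma)=\frac{\dim V}{4\pi\sqrt\pi}\int_M R\,\theta\wedge d\theta$ by Gauss--Bonnet, the integral being nonzero for suitable $M$ --- forces $c_0=0$ and $c_1=\frac{1}{4\pi\sqrt\pi}$. By universality these identities hold on every contact $3$-manifold, which gives the absence of a $t^{-1}$ term and the first summand of \eqref{eq:69}.

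For the remaining coefficients ($j=n+2\ge2$) I would show that $\int_M P_n\,\dvol$ lies in the ideal generated by $A$ and its covariant derivatives. Write $Q_j=\widetilde Q_j+(\text{terms in the }A\text{-ideal})$ with $\widetilde Q_j:=Q_j|_{A=0}$; in dimension $3$ every vertical derivative falls into this ideal --- already $\nabla_0R=R_{,0}=2\Re A_{11,}{}^{11}$, and commuting $\nabla_0$ past horizontal derivatives only creates Webster-curvature and further $A$-terms --- so $\widetilde Q_j$ is a polynomial in $R$ and its horizontal Tanaka--Webster derivatives alone. On a torsion-free pseudohermitian $3$-manifold --- in particular any CR Seifert manifold, where $A=\mathcal{L}_TJ=0$ --- this is locally the pullback of a Riemannian curvature invariant of the base surface $\Sigma$ with $R$ the Gauss curvature, so $\int_M\tra(a_j)\,\theta\wedge d\theta=2\pi\int_\Sigma\widetilde Q_j\,dA$. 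By Corollary~\ref{cor:full-heat} this vanishes, and since CR Seifert manifolds realise every closed oriented Riemannian surface as base (up to an overall rescaling absorbed by the homogeneity of $\widetilde Q_j$), $\int_\Sigma\widetilde Q_j\,dA=0$ for all closed surfaces. As $\widetilde Q_j$ has weight $2j\ge4\ne2=\dim\Sigma$, two-dimensional invariance theory (see e.g.~\cite{Gilkey}) forces $\widetilde Q_j$ to be a universal total divergence; transported back to a general contact $3$-manifold this identity picks up only $A$-ideal corrections (the extra $\nabla_0$-commutators), so $\tra(a_j)=\dim V\bigl(\mathrm{div}_H(\cdot)+(A\text{-ideal})\bigr)$, and a horizontal Tanaka--Webster divergence integrates to zero against $\theta\wedge d\theta$. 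This is the asserted factorisation.

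The crux --- and the step I expect to require the most care --- is the third paragraph: matching the horizontal Tanaka--Webster calculus on a torsion-free pseudohermitian $3$-manifold with the Levi--Civita calculus on $\Sigma$, applying the precise dimension-$2$ structure theorem (a Riemannian invariant of weight different from the dimension whose integral vanishes on every closed manifold is a divergence), and verifying that re-reading the resulting universal divergence identity with nonzero torsion produces only terms in the ideal of $A$. Once Corollary~\ref{cor:full-heat} is in hand, the identification of the $t^{-1}$ and $t^{-1/2}$ terms is essentially bookkeeping.
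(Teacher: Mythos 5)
Your proposal is correct and supplies exactly what the paper compresses into the one-line remark ``by universality of the coefficients of the heat development'' following Corollary~\ref{cor:full-heat}. The real content of the deduction is the step you isolate in your third paragraph: on a CR Seifert manifold one only knows that the \emph{integrals} $\int_M Q_j|_{A=0}\,\dvol$ vanish, not that the local density $Q_j|_{A=0}$ does, so one must invoke the two-dimensional Gilkey-type structure theorem (a universal Riemannian scalar of weight $2j>2$ in dimension $2$ whose integral vanishes on every closed surface is a universal divergence) together with the fact that CR Seifert geometries realise all closed Riemannian surfaces as bases modulo rescaling, and then push this divergence identity back up to the Tanaka--Webster level where it can acquire only $A$-ideal corrections. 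That is the right argument, and the scaling/Gauss--Bonnet bookkeeping that pins down the vanishing of the $t^{-1}$ term and the coefficient $\frac{1}{4\pi\sqrt\pi}$ of the $t^{-1/2}$ term agrees with the paper. One small imprecision: you write that ``$\int_M P_n\,\dvol$ lies in the ideal''; of course it is the integrand $P_n$, defined only modulo divergence, that lies in the $A$-ideal --- but this is clearly what you prove.
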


\smallskip

In the opposite direction, when $t\to +\infty$, the trace formula
\eqref{eq:66} gives the asymptotic development of the dynamical theta
function $\vartheta$. Indeed, after removing the zero eigenspace, heat
decays exponentially, yielding the following property.
\begin{cor}
  \label{cor:theta-decay}
  On CR Seifert manifolds, it holds as $t \rightarrow +\infty$ that
  \begin{equation}
    \label{eq:70}
    \vartheta(t) = \kappa(M, \rho) - \dim V \frac{\sqrt \pi
      \chi(\Sigma)}{\sqrt t} + O(e^{-Ct})\,.
  \end{equation}
\end{cor}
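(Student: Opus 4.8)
The plan is to read off the asymptotics directly from the trace formula of Theorem~\ref{thm:Selberg}, which already identifies $\vartheta(t)$ for every $t>0$ with a spectral quantity. Rearranging \eqref{eq:66} gives
\begin{equation*}
  \vartheta(t) = \tr_\kappa(e^{-t\Delta}) - \dim V\,\frac{\sqrt\pi\,\chi(\Sigma)}{\sqrt t}
  = 2\tr(e^{-t\Delta_0}) - \tr(e^{-t\Delta_1}) - \dim V\,\frac{\sqrt\pi\,\chi(\Sigma)}{\sqrt t}\,,
\end{equation*}
so the whole question reduces to the large-time behaviour of the heat traces of the fourth-order contact Laplacians $\Delta_0$ and $\Delta_1$.

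Next I would invoke the hypoelliptic spectral theory recalled in \S\ref{subsec:heisenberg}: each $\Delta_k$ is a non-negative selfadjoint operator whose spectrum is discrete and accumulates only at $+\infty$, with finite-dimensional kernel. Hence if $\mu_k>0$ denotes its spectral gap (smallest non-zero eigenvalue), then
\begin{equation*}
  \tr(e^{-t\Delta_k}) = \dim\ker\Delta_k + O(e^{-t\mu_k}) \quad \text{as } t\to +\infty\,.
\end{equation*}
Taking $C=\min(\mu_0,\mu_1)>0$ and recalling the definition \eqref{eq:28} of $\kappa(M,\rho)$ as $2\dim\ker\Delta_0 - \dim\ker\Delta_1 = 2\dim H^0(M,\rho) - \dim H^1(M,\rho)$, this yields $\tr_\kappa(e^{-t\Delta}) = \kappa(M,\rho) + O(e^{-Ct})$, and substituting back gives exactly \eqref{eq:70}.

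The only point requiring a word of care is that the error in the heat trace is genuinely exponential rather than merely $o(1)$, but this is immediate once one knows the spectrum is discrete with a strictly positive gap, which is precisely what maximal hypoellipticity of $\Delta$ provides (see \S\ref{subsec:heisenberg} and Proposition~\ref{prop:hypoheat}). So there is no substantial obstacle here; all the content is already in Theorem~\ref{thm:Selberg}, and what remains is the elementary observation that its left-hand side stabilises exponentially fast to $\kappa(M,\rho)$ as $t\to+\infty$. (One could instead try to estimate $\vartheta(t)$ directly from its dynamical definition \eqref{eq:67} together with Proposition~\ref{prop:ind}, but for large $t$ that side is a slowly varying series and does not cheaply reveal the constant $\kappa(M,\rho)$; the heat-trace route is the natural one.)
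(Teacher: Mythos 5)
Your argument is correct and is essentially the one the paper uses: the text preceding Corollary~\ref{cor:theta-decay} derives it from the trace formula~\eqref{eq:66} together with the observation that, after removing the zero eigenspace, the heat trace decays exponentially, which combined with~\eqref{eq:28} gives the constant $\kappa(M,\rho)$. Your write-up just makes the spectral-gap step explicit; there is no substantive difference.
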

This behaviour is not surprising; it can also be seen from the
explicit formula \eqref{eq:67} and Proposition \ref{prop:ind}, which
relate the dynamical theta function to the classical Jacobi theta
function, whose decay at $+\infty$ is well known; see e.g.~\cite[\S
21.51]{WW}.

\bibliographystyle{abbrv}
%\bibliographystyle{smfplain}

%\bibliography{torsion}

%\bibliographystyle{plain}
%\begin{thebibliography}{APS76}

%\end{thebibliography}

-----------------------------------------------------

\end{document}